\DeclareMathOperator*{\argmax}{arg\,max}
\DeclareMathOperator*{\argmin}{arg\,min}
\useunder{\uline}{\ul}{}
\newtheorem{defin}{{Definition}}
\newtheorem{assumption}{{Assumption}}
\newtheorem{theorem}{Theorem}
\newtheorem{lemma}{Lemma} 
\newtheorem{remark}{Remark}
\newcommand{\be}{\begin{equation}}
\newcommand{\ee}{\end{equation}}
\newcommand{\fail}{\textrm{fail}}
\providecommand{\keywords}[1]
{
  \small	
  \textbf{\textit{Keywords---}} #1
}
\newcommand{\tcb}[1]{{#1}}
\newcommand{\tcr}[1]{{#1}}
\begin{document}

\title{A New Inexact Proximal Linear Algorithm with Adaptive Stopping Criteria for Robust Phase Retrieval}

\author{Zhong Zheng, Shiqian Ma, and Lingzhou Xue
\footnote{{Z. Zheng} and {L. Xue} are with the Department of Statistics at Penn State University, University Park, PA 16802.  Research is partly supported by NSF grants DMS-1953189, CCF-2007823, and DMS-2210775. Email: \{zvz5337,
lzxue\}@psu.edu. {S. Ma} is with the Department of Computational Applied Mathematics and Operations Research at Rice University, Houston, TX 77005. Research is supported in part by NSF grants DMS-2243650, CCF-2308597, CCF-2311275, and ECCS-2326591, and a startup fund from Rice University. Email: sqma@rice.edu. {S. Ma} and {L. Xue} are co-corresponding authors. } 
}



\date{}

\maketitle

\begin{abstract}
This paper considers the robust phase retrieval problem, which can be cast as a nonsmooth and nonconvex optimization problem. We propose a new inexact proximal linear algorithm with the subproblem being solved inexactly. Our contributions are two adaptive stopping criteria for the subproblem. The convergence behavior of the proposed methods is analyzed. Through experiments on both synthetic and real datasets, we demonstrate that our methods are much more efficient than existing methods, such as the original proximal linear algorithm and the subgradient method. 
\end{abstract}

\keywords{Robust Phase Retrieval, Nonconvex and Nonsmooth Optimization, Proximal Linear Algorithm, Complexity, Sharpness.}

\section{Introduction}
Phase retrieval aims to recover a signal from intensity-based or magnitude-based measurements. It finds various applications in different fields, including X-ray crystallography \cite{miao1999extending}, optics \cite{millane1990phase}, array and high-power coherent diffractive imaging \cite{chai2010array}, astronomy \cite{fienup1987phase} and microscopy \cite{miao2008extending}. Mathematically, phase retrieval tries to find the true signal vectors $x_\star$ or $-x_\star$ in $\mathbb{R}^n$ from a set of magnitude measurements:
\begin{equation}\label{intro_noiseless_phase}
   b_{i}=    (a_i^\top x_\star)^2, \text{ for } i=1,2,\ldots,m,
\end{equation}
where $a_i\in \mathbb{R}^n$ and $b_i\geq 0, i=1,2,\ldots,m$. 
Directly solving the equations leads to an NP-hard problem \cite{fickus2014phase}, and nonconvex algorithms based on different designs of objective functions have been well studied in the literature, including Wirtinger flow \cite{candes2015phase}, truncated Wirtinger flow \cite{chen2017solving}, truncated amplitude flow \cite{wang2017solving}, \tcb{reshaped Wirtinger flow \cite{zhang2017nonconvex}}, etc.

In this paper, we focus on the robust phase retrieval (RPR) problem \cite{duchi2019solving}, which considers the case where $b_i$ contains noise due to measurement errors of equipment. That is, 
\begin{equation}\label{intro_corrupted_phase}
    b_{i} = \begin{cases}
(a_i^\top x_\star)^2, & i\in \mathbb{I}_1,\\
\xi_i, & i\in \mathbb{I}_2,
\end{cases}
\end{equation}
in which $\mathbb{I}_1\bigcup \mathbb{I}_2 = \{1,2\ldots,m\}$, $\mathbb{I}_1\cap\mathbb{I}_2 = \emptyset$, and $\xi_i$ denotes a random noise. \cite{duchi2019solving} proposed to formulate RPR as the following optimization problem:
\begin{equation}\label{duchi_l1_ori}
    \min_{x\in \mathbb{R}^{n}} F(x):=\frac{1}{m}\sum_{i=1}^{m}\left|(a_i^\top x)^2-b_i\right|.
\end{equation}
It is demonstrated in \cite{duchi2019solving} that using \eqref{duchi_l1_ori} for RPR possesses better recoverability compared to the median truncated Wirtinger flow algorithm \cite{zhang2016provable} based on the $\ell_2$-loss. 

Solving \eqref{duchi_l1_ori} is challenging because it is a nonconvex and nonsmooth optimization problem. 
In 
\cite{davis2018subgradient}, the authors proposed the subgradient method to solve it. This method requires geometrically decaying step size, and it is unclear how to schedule this kind of step size in practice.  \cite{duchi2019solving} proposed to use the proximal linear (PL) algorithm to solve \eqref{duchi_l1_ori}. For ease of presentation, we rewrite \eqref{duchi_l1_ori} as 
\be\label{RPR}
\min_{x\in \mathbb{R}^{n}} F(x) = h(c(x)) = \frac{1}{m}\left\||Ax|^2-b\right\|_1,
\ee
where $Ax = [\langle a_1, x\rangle, \ldots, \langle a_m, x\rangle]^\top$, $b=[b_1,\ldots,b_m]^\top$, $h(z) := \frac{1}{m}\|z\|_1$, and $c(x) := |Ax|^2-b$ is a smooth map in which $|\cdot|^2$ is element-wise square. One typical iteration of the PL algorithm is 
\be\label{PL}
x^{k+1} \approx \argmin_{x\in \mathbb{R}^{n}}\ F_t(x;x^k),
\ee
where $t>0$ is the step size,
\begin{equation}\label{def-F_l}
    F(z;y) := h(c(y) + \nabla c(y)(z-y)),
\end{equation}
\begin{equation}\label{def-Ft}
    F_t(z;y) := F(z;y) + \frac{1}{2t}\|z-y\|_2^2,
\end{equation}
$\nabla c$ denotes the Jacobian of $c$, and ``$\approx$" means that the subproblem is solved inexactly. The subproblem \eqref{PL} is convex and can be solved by various methods such as the proximal operator graph splitting (POGS) algorithm used in \cite{duchi2019solving}. The PL method has drawn lots of attention recently. It has been studied by \cite{lewis2016proximal,drusvyatskiy2019efficiency,duchi2018stochastic} and applied to solving many important applications such as RPR \cite{duchi2019solving}, robust matrix recovery \cite{charisopoulos2021low,charisopoulos2019composite}, and sparse spectral clustering \cite{wang2020manifold}. The subproblem \eqref{PL} is usually solved inexactly for practical concerns. As pointed out in \cite{duchi2019solving}, the PL implemented in \cite{duchi2019solving} is much slower than the median truncated Wirtinger flow method. We found that this is mainly due to their stopping criterion for solving the subproblem \eqref{PL}, which unnecessarily solves \eqref{PL} to very high accuracy in the early stage of the algorithm. Moreover, we found that the POGS algorithm used in \cite{duchi2019solving} is ineffective in solving the subproblem \eqref{PL}. 
In this paper, we propose adaptive stopping criteria for inexactly solving \eqref{PL} with the fast iterative shrinkage-thresholding algorithm (FISTA) \cite{nesterov1988,beck2009fast,nesterov2013gradient}. We found that our new inexact PL (IPL) with the adaptive stopping criteria greatly outperforms existing implementations of PL methods \cite{duchi2019solving} for solving RPR \eqref{RPR}. 

{\bf Our Contributions.} In this paper, we propose two new adaptive stopping criteria for inexactly solving \eqref{PL}. The first one ensures that \eqref{PL} is solved to a relatively low accuracy: 
\begin{equation}\label{low0}
\begin{aligned}
    \text{(LACC)} &\quad F_t(x^{k+1};x^k)-\min_{x\in \mathbb{R}^n}\ F_t(x;x^k) \\
      & \leq\rho_l\left(F_t(x^{k};x^k) - F_t(x^{k+1};x^k)\right), \quad \rho_l > 0,
\end{aligned}
\end{equation}
and the second one ensures that \eqref{PL} is solved to a relatively high accuracy:
\begin{equation}\label{high0}
\begin{aligned}
    \text{(HACC)} &\quad F_t(x^{k+1};x^k)-\min_{x\in \mathbb{R}^n}\ F_t(x;x^k) \\
      & \leq\frac{\rho_h}{2t}\|x^{k+1} - x^k\|_2^2, \quad 0 <\rho_h<1/4.
\end{aligned}
\end{equation}
Here, $\rho_l$ and $\rho_h$ are given constants. \tcb{Similar to the proximal bundle method \cite{diaz2023optimal} for nonsmooth convex problems,  (LACC) and (HACC) are designed to ensure the sufficient \tcr{decrease} of the objective function for the nonsmooth and nonconvex RPR problem.} Note that both (LACC) and (HACC) are only used theoretically because $\min_{x\in \mathbb{R}^n}\ F_t(x;x^k)$ is not available. Later we will propose more practical stopping criteria that can guarantee (LACC) and (HACC). The connections of our approach to existing work are listed below. 

\begin{itemize}
\item[(a)] Our (LACC) condition coincides with the inexact stopping criterion proposed in \cite{bonettini2016variable,lee2019inexact,lee2020inexact} for the proximal gradient method. In these papers, the authors focus on a different optimization problem 
\begin{equation*}
    \min_{x\in \mathbb{R}^n} f_0(x) := f_1(x) + f_2(x),
\end{equation*}
in which $f_1$ is a smooth function, and $f_2$ is a proper, convex, and lower semi-continuous function. One typical iteration of their algorithms can be written as 
\begin{subequations}
\begin{align}
y^{k+1} \approx & \ \min_{x\in\mathbb{R}^n} f_{0k}(x) = f_1(x^k) + (x-x_k)^\top\nabla f_1(x^k) \nonumber\\
&+ f_2(x) + \frac{1}{2}(x-x^k)^\top H_k(x-x^k), \label{Bonettini-subproblem}\\
x^{k+1} = & \ x^k + \lambda_k(y^{k+1} - x^k), \label{Bonettini-subproblem-2}
\end{align}
\end{subequations}
where $H_k\in\mathbb{R}^{n\times n}$ is a positive semi-definite matrix and $\lambda_k\in[0,1]$ is a step size. 
The stopping criterion for inexactly solving \eqref{Bonettini-subproblem} proposed in \cite{bonettini2016variable,lee2019inexact,lee2020inexact} is
\begin{equation}\label{Bonettini-subproblem-stop-crit} f_{0k}(y^{k+1}) - \Tilde{f}_{0k}\leq \eta\left(f_{0k}(x^{k}) - \Tilde{f}_{0k}\right),
\end{equation}
where $\Tilde{f}_{0k} = \min_{x\in\mathbb{R}^n} f_{0k}(x)$ and $\eta\in(0,1)$. We note that this is the same as our (LACC). Therefore, our (LACC) is essentially an extension of \eqref{Bonettini-subproblem-stop-crit} from the proximal gradient method to the proximal linear method. 
\item[(b)] To the best of our knowledge, our (HACC) criterion is new and serves as a good alternative to (LACC). From our numerical experiments, we found that (HACC) works comparably with (LACC), and we believe that it can be useful for other applications.
\item[(c)] We analyze the overall complexity and the local convergence of our IPL algorithm for solving RPR under the sharpness condition. To the best of our knowledge, this is the first time such results have been obtained under the sharpness condition.

\item[(d)] We propose to solve \eqref{PL} inexactly using FISTA \cite{nesterov1988,beck2009fast,nesterov2013gradient}, which uses easily verifiable stopping conditions that can guarantee (LACC) and (HACC). Through extensive numerical experiments, we demonstrate that our IPL with the new stopping criteria significantly outperforms existing algorithms for solving RPR. 
\end{itemize}

{\bf Organization.} The rest of this paper is organized as follows. In Section \ref{sec:main-alg-and-convergence}, we propose the main framework of our inexact proximal linear algorithm with two new adaptive stopping criteria for the subproblem. We establish its iteration complexity for obtaining an $\epsilon$-stationary point and its local convergence under the sharpness condition. Connections with some existing methods are also discussed. In Section \ref{sec:fista}, we discuss how to adapt the FISTA to solve the subproblem inexactly. We also establish the overall complexity of FISTA -- the total number of iterations of the FISTA -- in order to obtain an $\epsilon$-optimal solution under the sharpness condition. In Section \ref{sec:numerical}, we show the numerical results on both synthetic and real datasets to demonstrate the advantage of the proposed methods over some existing methods. The proofs for all the theorems and lemmas are given in Section \ref{sec:proof}. Finally, we include some concluding remarks in Section \ref{sec:conclusion}.

\section{IPL and Its Convergence Analysis}\label{sec:main-alg-and-convergence}
In this section, we introduce our IPL algorithm for solving the RPR \eqref{RPR} with the inexact stopping criteria (LACC) and (HACC) for the subproblem \eqref{PL} and analyze its convergence. We will discuss the FISTA for solving \eqref{PL} that guarantees (LACC) and (HACC) in the next section. 

We first follow \cite{drusvyatskiy2019efficiency} to introduce some notation. Let
$$S_t(y) := \argmin_{x\in \mathbb{R}^n} \ F_t(x;y),$$
$$\varepsilon(x;y) := F_t(x;y) - F_t(S_t(y);y),$$
where $F_t(z;y)$ is defined in \eqref{def-Ft}. \tcb{We will also use the notation
$$L = \frac{2}{m}\|A\|_2^2 = \frac{2}{m}\left\|\sum_{i=1}^m a_ia_i^\top\right\|_2.$$}
A \tcb{Meta Algorithm} of our IPL is summarized in Algorithm \ref{alg:adaptive-IPL}. We again emphasize that Algorithm \ref{alg:adaptive-IPL} cannot be implemented because $\min_{x\in \mathbb{R}^n}\ F_t(x;x^k)$ is not available, and we will discuss practical versions of it in the next section. 
\begin{algorithm}[ht]  
\caption{IPL -- A \tcb{Meta Algorithm}}
\label{alg:adaptive-IPL}
\begin{algorithmic}
\STATE \textbf{Input:} Initial point $x^0$, step size $t = 1/L$, 
parameters $\rho_l>0$ and $\rho_h\in (0,1/4)$
\FOR{$k = 0, 1, \ldots, $}
\STATE{Obtain $x^{k+1}$ by inexactly solving \eqref{PL} with one of the following stopping criteria:}
\STATE{\quad\textbf{Option 1:} (LACC), i.e., \eqref{low0}}
\STATE{\quad\textbf{Option 2:} (HACC), i.e., \eqref{high0}}
\ENDFOR
\end{algorithmic}
\end{algorithm}
\subsection{Convergence under General Settings}
In this subsection, we analyze the convergence rate of IPL (Algorithm \ref{alg:adaptive-IPL}) for obtaining an $\epsilon$-stationary point of \eqref{RPR} \tcb{under the general settings when the sharpness condition may not hold.} We use the definition of $\epsilon$-stationary point as introduced in \cite{drusvyatskiy2019efficiency}. 


\begin{defin}\label{def:epsilon_stationary_point}
We call $\tilde{x}$ an $\epsilon$-stationary point of \eqref{RPR} if the following inequality holds: 
\begin{equation}\label{cond:gen_exact_termi}
\|\mathcal{G}_t(\tilde{x})\|_2\leq \epsilon,
\end{equation}
where $\mathcal{G}_t(x)$ is the proximal gradient which is defined as:
\begin{equation}\label{def:gen_proxgradient}
\mathcal{G}_t(x)=t^{-1}\left(x-S_t(x)\right).
\end{equation}
\end{defin}

Our convergence rate result of Algorithm \ref{alg:adaptive-IPL} is given in Theorem \ref{thm:gen_convergence_rate}, and the proof is given in Section \ref{sec:proof}. 

\begin{theorem}\label{thm:gen_convergence_rate}
Denote $F^\star = \inf_{x\in\mathbb{R}^n} F(x)$. For Algorithm \ref{alg:adaptive-IPL} with $t = 1/L$, the following conclusion holds.
\begin{itemize}
    \item[(a)] When (LACC) holds \tcb{with $\rho_l>0$} for any $k\in\mathbb{N}$, we can find an $\epsilon$-stationary point in \tcb{$$\left\lfloor\frac{2(1+\rho_l)(F(x^0) - F^\star)}{t\epsilon^2}\right\rfloor$$}
    iterations for any $\epsilon>0$.
    \item[(b)] When (HACC) holds \tcb{with $0<\rho_h<1/4$} for any $k\in\mathbb{N}$, we can find an $\epsilon$-stationary point in \tcb{$$\left\lfloor\frac{2(1-\sqrt{\rho_h})^2(F(x^0) - F^\star)}{(1-2\sqrt{\rho_h})t\epsilon^2}\right\rfloor$$}
    iterations for any $\epsilon>0$.
\end{itemize}
\end{theorem}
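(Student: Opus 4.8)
The plan is to derive, in each of the two cases, a "sufficient decrease" inequality of the form $F(x^k) - F(x^{k+1}) \geq c\, t\, \|\mathcal{G}_t(x^k)\|_2^2$ for a suitable constant $c>0$, and then telescope. The starting point in both cases is the three-term comparison at the exact proximal point $S_t(x^k)$: since $F_t(\cdot; x^k)$ is $1/t$-strongly convex in its first argument and minimized at $S_t(x^k)$, we have $F_t(S_t(x^k); x^k) + \frac{1}{2t}\|x - S_t(x^k)\|_2^2 \leq F_t(x; x^k)$ for every $x$; taking $x = x^k$ gives $F_t(x^k;x^k) - F_t(S_t(x^k);x^k) \geq \frac{1}{2t}\|x^k - S_t(x^k)\|_2^2 = \frac{t}{2}\|\mathcal{G}_t(x^k)\|_2^2$, using the definition \eqref{def:gen_proxgradient}. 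Since $F_t(x^k; x^k) = F(x^k;x^k) = F(x^k)$ (the linearization is exact at the base point and the quadratic term vanishes), this reads $F(x^k) - F_t(S_t(x^k);x^k) \geq \frac{t}{2}\|\mathcal{G}_t(x^k)\|_2^2$. We also need the standard descent estimate $F(x^{k+1}) \leq F_t(x^{k+1}; x^k)$, which follows because $c$ is a smooth map whose Jacobian is Lipschitz with the relevant constant and $h$ is $1$-Lipschitz in the appropriate norm, so $F(z) \leq F(z;y) + \frac{1}{2t}\|z-y\|_2^2 = F_t(z;y)$ whenever $t = 1/L$ — this is exactly the reason the step size is chosen as $1/L$.

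For part (a), write $\varepsilon_{k+1} := \varepsilon(x^{k+1};x^k) = F_t(x^{k+1};x^k) - F_t(S_t(x^k);x^k)\geq 0$. The (LACC) condition \eqref{low0} says $\varepsilon_{k+1} \leq \rho_l\big(F_t(x^k;x^k) - F_t(x^{k+1};x^k)\big) = \rho_l\big(F(x^k) - F_t(x^{k+1};x^k)\big)$. Rearranging, $F_t(x^{k+1};x^k) \leq F(x^k) - \frac{1}{\rho_l}\varepsilon_{k+1}$; but also $F_t(x^{k+1};x^k) = F_t(S_t(x^k);x^k) + \varepsilon_{k+1} \geq F(x^k) - \frac{t}{2}\|\mathcal{G}_t(x^k)\|_2^2 + \varepsilon_{k+1}$ by the displayed lower bound. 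Combining these two and using $F(x^{k+1}) \leq F_t(x^{k+1};x^k)$, one gets $F(x^k) - F(x^{k+1}) \geq \frac{t}{2}\|\mathcal{G}_t(x^k)\|_2^2 - (1+\tfrac{1}{\rho_l})\varepsilon_{k+1}$ on one side and $\varepsilon_{k+1} \leq \rho_l\big(F(x^k)-F(x^{k+1})\big)$ on the other; eliminating $\varepsilon_{k+1}$ yields $(1+\rho_l)\big(F(x^k)-F(x^{k+1})\big) \geq \frac{t}{2}\|\mathcal{G}_t(x^k)\|_2^2$. Telescoping over $k = 0,\dots,N-1$ gives $(1+\rho_l)(F(x^0) - F^\star) \geq \frac{t}{2}\sum_{k=0}^{N-1}\|\mathcal{G}_t(x^k)\|_2^2 \geq \frac{Nt}{2}\min_k \|\mathcal{G}_t(x^k)\|_2^2$, so after $N = \big\lfloor 2(1+\rho_l)(F(x^0)-F^\star)/(t\epsilon^2)\big\rfloor$ iterations some iterate is $\epsilon$-stationary.

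For part (b), the (HACC) condition \eqref{high0} gives $\varepsilon_{k+1} \leq \frac{\rho_h}{2t}\|x^{k+1} - x^k\|_2^2$, so the inexact point is close to the exact one in a way controlled by the step length. The idea is to compare $\|x^{k+1} - x^k\|_2$ with $\|S_t(x^k) - x^k\|_2 = t\|\mathcal{G}_t(x^k)\|_2$: from strong convexity, $\frac{1}{2t}\|x^{k+1} - S_t(x^k)\|_2^2 \leq \varepsilon_{k+1}\leq \frac{\rho_h}{2t}\|x^{k+1}-x^k\|_2^2$, hence $\|x^{k+1} - S_t(x^k)\|_2 \leq \sqrt{\rho_h}\,\|x^{k+1}-x^k\|_2$. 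By the triangle inequality this sandwiches $\|x^{k+1}-x^k\|_2$ between $(1\pm\sqrt{\rho_h})$ multiples of $\|S_t(x^k)-x^k\|_2$. Feeding this back, together with $F(x^{k+1})\leq F_t(x^{k+1};x^k) = F_t(S_t(x^k);x^k) + \varepsilon_{k+1} \leq F(x^k) - \frac{t}{2}\|\mathcal{G}_t(x^k)\|_2^2 + \frac{\rho_h}{2t}\|x^{k+1}-x^k\|_2^2$ and the bound $\|x^{k+1}-x^k\|_2^2 \leq \big(\frac{\sqrt{\rho_h}}{1-\sqrt{\rho_h}} + 1\big)^2\|S_t(x^k)-x^k\|_2^2 = \frac{1}{(1-\sqrt{\rho_h})^2}t^2\|\mathcal{G}_t(x^k)\|_2^2$, gives $F(x^k) - F(x^{k+1}) \geq \frac{t}{2}\big(1 - \frac{\rho_h}{(1-\sqrt{\rho_h})^2}\big)\|\mathcal{G}_t(x^k)\|_2^2 = \frac{t}{2}\cdot\frac{1-2\sqrt{\rho_h}}{(1-\sqrt{\rho_h})^2}\|\mathcal{G}_t(x^k)\|_2^2$, where the constraint $\rho_h < 1/4$ is exactly what makes $1 - 2\sqrt{\rho_h} > 0$. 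Telescoping as before yields the claimed iteration count $\big\lfloor 2(1-\sqrt{\rho_h})^2(F(x^0)-F^\star)/((1-2\sqrt{\rho_h})t\epsilon^2)\big\rfloor$.

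The main obstacle is part (b): one has to be careful that the decrease is measured against $\|\mathcal{G}_t(x^k)\|_2$ (the proximal gradient at the \emph{exact} proximal map) while the algorithm only produces the inexact iterate $x^{k+1}$, so the whole argument hinges on converting the (HACC) bound $\varepsilon_{k+1}\le \frac{\rho_h}{2t}\|x^{k+1}-x^k\|_2^2$ — which references the unknown quantity $\|x^{k+1}-x^k\|_2$ — into a bound in terms of $\|S_t(x^k)-x^k\|_2$ via the triangle-inequality sandwich, and then tracking the algebra so that the resulting constant is sharp and matches $\frac{1-2\sqrt{\rho_h}}{(1-\sqrt{\rho_h})^2}$. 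Everything else is the routine strong-convexity lower bound, the $L$-smoothness upper bound $F(\cdot)\le F_{1/L}(\cdot;\cdot)$, and a telescoping sum.
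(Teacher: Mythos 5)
Your proof is correct and follows essentially the same route as the paper: the per-iteration descent bound $F(x^k)-F(x^{k+1})+\varepsilon(x^{k+1};x^k)\geq \frac{1}{2t}\|x^k-S_t(x^k)\|_2^2$ (the paper's Lemma \ref{thm:gen_descent0}), a conversion of (LACC)/(HACC) into bounds on $\varepsilon(x^{k+1};x^k)$, and a telescoping sum; in part (b) your triangle-inequality sandwich $(1-\sqrt{\rho_h})\|x^{k+1}-x^k\|_2\leq\|x^k-S_t(x^k)\|_2$ is a cleaner derivation of exactly the bound the paper obtains by factoring $\|u+v\|_2^2-(1-\sqrt{\rho_h})^2\|u\|_2^2\geq 0$. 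Two transcription slips in part (a): the middle relation should read $F_t(x^{k+1};x^k)=F_t(S_t(x^k);x^k)+\varepsilon_{k+1}\leq F(x^k)-\frac{t}{2}\|\mathcal{G}_t(x^k)\|_2^2+\varepsilon_{k+1}$ (you wrote $\geq$), and the coefficient of $\varepsilon_{k+1}$ in your intermediate descent inequality should be $1$, not $1+\tfrac{1}{\rho_l}$ --- as written, combining it with $\varepsilon_{k+1}\leq\rho_l\bigl(F(x^k)-F(x^{k+1})\bigr)$ would only give $(2+\rho_l)\bigl(F(x^k)-F(x^{k+1})\bigr)\geq\frac{t}{2}\|\mathcal{G}_t(x^k)\|_2^2$; with the coefficient $1$ (which is what your chain of inequalities actually produces) you recover the stated $(1+\rho_l)$ constant.
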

\tcb{Theorem 1 shows that IPL finds an $\epsilon$-stationary point in $O(1/\epsilon^2)$ main iterations with the adaptive IPL stopping conditions. Moreover, Theorem 1  achieves the best known convergence rate for PL in  \cite{drusvyatskiy2019efficiency}. We should point out that we use two adaptive stopping criteria for the subproblem, but  \cite{drusvyatskiy2019efficiency} requires solving the subproblem \eqref{PL} exactly (see their Proposition 3) or using their pre-determined subproblem accuracy conditions (see their Theorem 5.2).}

\subsection{Local Convergence under Sharpness Assumption}

In this subsection, we analyze the local convergence of IPL (Algorithm \ref{alg:adaptive-IPL}) to the global optimal solution under the sharpness condition. 

\begin{assumption}[Sharpness]\label{ass:sharpness}
There exists a constant $\lambda_s>0$ such that the following inequality holds for any $x\in \mathbb{R}^n$:
\begin{equation}\label{ineq:sharpness}
F(x)-F(x_\star)\geq \lambda_s\Delta(x),
\end{equation}
where $\Delta(x) := \min\{\|x-x_\star\|_2,\|x+x_\star\|_2\}$.
\end{assumption}

\cite{duchi2019solving} proved that the sharpness condition (Assumption \ref{ass:sharpness}) is satisfied by the RPR \eqref{RPR} with high probability under certain mild conditions.

Another assumption is about the closeness between the initial point and the optimal solution, which can be guaranteed by the modified spectral initialization (see Algorithm 3 in \cite{duchi2019solving}) with high probability under some mild conditions.

\begin{assumption}\label{ass:init}
Under Assumption \ref{ass:sharpness}, we assume that the initial point $x^0$ in Algorithm \ref{alg:adaptive-IPL} satisfies the following inequalities. 
\begin{enumerate}
\item[(a)] If (LACC) is chosen in Algorithm \ref{alg:adaptive-IPL}, then we assume $x^0$ satisfies
\begin{equation}\label{cond:init_low}
F(x^0)-F(x_\star)\leq \lambda_s^2/(2L).
\end{equation}
\item[(b)] If (HACC) is chosen in Algorithm \ref{alg:adaptive-IPL}, then we assume $x^0$ satisfies
\begin{equation}\label{cond:init_high}
\Delta(x^0)\leq \frac{\lambda_s(1-4\rho_h)}{2(1-3\rho_h)L}.
\end{equation}
\end{enumerate}
\end{assumption}

We now define the $\epsilon$-optimal solution to RPR \eqref{RPR}. 
\begin{defin}
    We call $\bar{x}$ an $\epsilon$-optimal solution to RPR \eqref{RPR}, if $\Delta(\bar{x})\leq \epsilon$.
\end{defin}

Now we are ready to show in Theorem \ref{thm:convergence_rate_sharpness} that, in terms of main iteration number, (LACC) leads to local linear convergence and (HACC) leads to local quadratic convergence.

\begin{theorem}\label{thm:convergence_rate_sharpness}
Let $t=\frac{1}{L}$ and suppose that Assumption \ref{ass:sharpness} holds. For the sequence $\{x^k\}_{k=0}^\infty$ generated by Algorithm \ref{alg:adaptive-IPL}, we have the following conclusions.
\begin{enumerate}
\item[(a)] (Low Accuracy) When \eqref{cond:init_low} holds and \eqref{low0} holds \tcb{with $\rho_l>0$} for any $k\in \mathbb{N}$, we have
$$\Delta(x^k)\leq \frac{F(x^0)-F(x_\star)}{\lambda_s}\left(\frac{1+4\rho_l}{2+4\rho_l}\right)^k, \qquad \forall k\in \mathbb{N}.$$
\item[(b)] (High Accuracy) When \eqref{cond:init_high} holds and \eqref{high0} holds \tcb{with $0<\rho_h<1/4$} for any $k\in \mathbb{N}$, we have
$$\Delta(x^k)\leq \frac{\lambda_s(1-4\rho_h)}{L(1-3\rho_h)} \zeta^{2^k}, \qquad \forall k\in \mathbb{N},$$
where 
$\zeta := \frac{L\Delta(x^0)(1-3\rho_h)}{\lambda_s(1-4\rho_h)}$. 
\end{enumerate}
\end{theorem}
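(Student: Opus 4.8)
The plan is to derive a single \emph{one-step estimate} comparing $F(x^{k+1})$ with $F(x_\star)$, and then to combine it with sharpness (Assumption~\ref{ass:sharpness}) and an induction that keeps the iterates inside the basin of attraction guaranteed by \eqref{cond:init_low}/\eqref{cond:init_high}. The two ingredients about the model are: (i) since the exact second-order remainder of $c$ is $[c(z)-c(y)-\nabla c(y)(z-y)]_i=(a_i^\top(z-y))^2\ge0$, the $1$-Lipschitzness of $\|\cdot\|_1$ gives $|F(z)-F(z;y)|\le\tfrac1m\|A(z-y)\|_2^2\le\tfrac L2\|z-y\|_2^2$, so with $t=1/L$ we get $F(z)\le F_t(z;y)$ for all $z,y$ and $F_t(y;y)=F(y)$; and (ii) $F_t(\cdot;x^k)$ is $L$-strongly convex, hence $\varepsilon(x;x^k)\ge\tfrac L2\|x-S_t(x^k)\|_2^2$, and plugging in the nearer of $\pm x_\star$ (call it $\hat x_\star$, so $\|\hat x_\star-x^k\|_2=\Delta(x^k)$ and $F(\hat x_\star)=F(x_\star)=F^\star$ by sharpness) as a competitor gives $F_t(S_t(x^k);x^k)\le F(x_\star)+L\Delta(x^k)^2$. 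Combining, for every $k$,
\[
F(x^{k+1})-F(x_\star)\le F_t(x^{k+1};x^k)-F(x_\star)\le L\Delta(x^k)^2+\varepsilon(x^{k+1};x^k),
\]
so the task reduces to controlling $\varepsilon(x^{k+1};x^k)$ via the chosen stopping rule and then applying sharpness, $\lambda_s\Delta(x^{k+1})\le F(x^{k+1})-F(x_\star)$.

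\textbf{(a) Low accuracy.} Condition \eqref{low0} together with $F(x_\star)\le F(x^{k+1})\le F_t(x^{k+1};x^k)$ first forces $F(x^{k+1})\le F(x^k)$ (so $\{F(x^k)\}$ is nonincreasing and stays $\le F(x^0)$), and rearranging \eqref{low0} gives $F_t(x^{k+1};x^k)\le\tfrac{1}{1+\rho_l}\big(\rho_l F(x^k)+F_t(S_t(x^k);x^k)\big)$; with the one-step estimate this yields $F(x^{k+1})-F(x_\star)\le\tfrac{\rho_l}{1+\rho_l}\big(F(x^k)-F(x_\star)\big)+\tfrac{L}{1+\rho_l}\Delta(x^k)^2$. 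Sharpness gives $L\Delta(x^k)^2\le\tfrac{L}{\lambda_s^2}\big(F(x^k)-F(x_\star)\big)^2$, and since $F(x^k)-F(x_\star)\le F(x^0)-F(x_\star)\le\lambda_s^2/(2L)$ by \eqref{cond:init_low}, the quadratic term is at most a fixed fraction of $F(x^k)-F(x_\star)$; done carefully, this turns the recursion into $F(x^{k+1})-F(x_\star)\le\tfrac{1+4\rho_l}{2+4\rho_l}\big(F(x^k)-F(x_\star)\big)$. Unrolling and using $\lambda_s\Delta(x^k)\le F(x^k)-F(x_\star)$ gives the stated bound.

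\textbf{(b) High accuracy.} Now \eqref{high0} reads $\varepsilon(x^{k+1};x^k)\le\tfrac{\rho_h L}{2}\|x^{k+1}-x^k\|_2^2$, so I must bound $\|x^{k+1}-x^k\|_2$ by a multiple of $\Delta(x^k)$. From $\varepsilon(x^{k+1};x^k)\ge\tfrac L2\|x^{k+1}-S_t(x^k)\|_2^2$ and \eqref{high0} we get $\|x^{k+1}-S_t(x^k)\|_2\le\sqrt{\rho_h}\,\|x^{k+1}-x^k\|_2$; feeding this, the competitor bound $\|S_t(x^k)-\hat x_\star\|_2\le\sqrt2\,\Delta(x^k)$ (again from $L$-strong convexity of $F_t(\cdot;x^k)$ and $F_t(S_t(x^k);x^k)\ge F(x_\star)$), and $\|\hat x_\star-x^k\|_2=\Delta(x^k)$ into the triangle inequality $\|x^{k+1}-x^k\|_2\le\|x^{k+1}-S_t(x^k)\|_2+\|S_t(x^k)-\hat x_\star\|_2+\|\hat x_\star-x^k\|_2$ and solving for $\|x^{k+1}-x^k\|_2$ gives $\|x^{k+1}-x^k\|_2\le C_{\rho_h}\Delta(x^k)$ for an explicit $C_{\rho_h}$. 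Substituting into the one-step estimate and applying sharpness yields the quadratic recursion $\Delta(x^{k+1})\le\tfrac{L(1-3\rho_h)}{\lambda_s(1-4\rho_h)}\Delta(x^k)^2=:C\,\Delta(x^k)^2$. An induction on $k$ closes the argument: \eqref{cond:init_high} says exactly $C\,\Delta(x^0)=\zeta\le\tfrac12<1$, so $\Delta(x^{k+1})\le\zeta\,\Delta(x^k)\le\Delta(x^k)$, the iterates never leave the region where the above estimates hold, and the standard unrolling of $\Delta(x^{k+1})\le C\,\Delta(x^k)^2$ gives $\Delta(x^k)\le C^{-1}\big(C\,\Delta(x^0)\big)^{2^k}=\tfrac{\lambda_s(1-4\rho_h)}{L(1-3\rho_h)}\zeta^{2^k}$.

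The conceptual steps are short; the real work — and the main obstacle — is the constant bookkeeping of the inexactness terms, i.e.\ arranging the strong-convexity and triangle-inequality estimates tightly enough to produce exactly $\tfrac{1+4\rho_l}{2+4\rho_l}$ in part (a) and $\tfrac{1-3\rho_h}{1-4\rho_h}$ in part (b) (a crude version of the estimates above gives the same $O(1/\epsilon^2)$-type behavior and a quadratic recursion, but with looser prefactors), together with making the part-(b) induction airtight so that $\Delta(x^k)$ stays within the region of validity of the quadratic recursion and $\hat x_\star$ may be taken as one fixed representative of $\{x_\star,-x_\star\}$ throughout.
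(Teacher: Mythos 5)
Your part (a) is correct and follows essentially the same route as the paper: a competitor bound $F_t(S_t(x^k);x^k)\le F(x_\star)+L\Delta(x^k)^2$, the rearranged form of \eqref{low0}, sharpness, and the initialization \eqref{cond:init_low} to turn the quadratic term into half of $F(x^k)-F(x_\star)$. In fact, because you keep only one copy of $\varepsilon(x^{k+1};x^k)$ (where the paper's Lemma \ref{descent_general} carries $2\varepsilon$), your computation gives the contraction factor $\frac{1+2\rho_l}{2+2\rho_l}\le\frac{1+4\rho_l}{2+4\rho_l}$, which is slightly sharper than, and hence implies, the stated bound.

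Part (b) has a genuine gap, and it is exactly the one you flag but do not close. Your chain $\|x^{k+1}-x^k\|_2\le\|x^{k+1}-S_t(x^k)\|_2+\|S_t(x^k)-\hat x_\star\|_2+\|\hat x_\star-x^k\|_2$ yields $\|x^{k+1}-x^k\|_2\le\frac{1+\sqrt2}{1-\sqrt{\rho_h}}\Delta(x^k)$ and therefore the recursion $\lambda_s\Delta(x^{k+1})\le L\bigl(1+\tfrac{(3+2\sqrt2)\rho_h}{2(1-\sqrt{\rho_h})^2}\bigr)\Delta(x^k)^2$. For small $\rho_h$ the bracket behaves like $1+2.91\rho_h$, which is strictly larger than the claimed $\tfrac{1-3\rho_h}{1-4\rho_h}=1+\rho_h+o(\rho_h)$ (e.g., at $\rho_h=0.01$ you get $1.036$ versus $1.010$). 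So you prove $\Delta(x^{k+1})\le C'\Delta(x^k)^2$ with $C'>C:=\tfrac{L(1-3\rho_h)}{\lambda_s(1-4\rho_h)}$, and unrolling gives $(C')^{2^k-1}\Delta(x^0)^{2^k}$, which is strictly weaker than the stated $C^{2^k-1}\Delta(x^0)^{2^k}$ for $k\ge1$; the constant is not cosmetic here because \eqref{cond:init_high} is calibrated so that precisely $C\Delta(x^0)=\zeta\le 1/2$. The missing idea is the paper's sharper one-step inequality \eqref{ineq:descent_general}, which retains $\tfrac{L}{4}\|x^{k+1}-x_\star\|_2^2$ on its left-hand side: one then bounds $2\varepsilon(x^{k+1};x^k)\le\rho_hL\|x^{k+1}-x^k\|_2^2\le\tfrac{L}{4}\|x^{k+1}-x_\star\|_2^2+\tfrac{\rho_hL}{1-4\rho_h}\|x^k-x_\star\|_2^2$ by Young's inequality with weight $1/(4\rho_h)$, lets the retained left-hand-side term absorb the first piece exactly, and obtains $F(x^{k+1})-F(x_\star)\le\tfrac{(1-3\rho_h)L}{1-4\rho_h}\|x^k\mp x_\star\|_2^2$, whence Assumption \ref{ass:sharpness} gives the stated constant. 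Your triangle-inequality detour through $S_t(x^k)$ and $\hat x_\star$ cannot reproduce this cancellation.
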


Theorem \ref{thm:convergence_rate_sharpness} shows that, with a good initialization, using (LACC) finds an $\epsilon$-optimal solution to \eqref{RPR} within $O(\log \frac{1}{\epsilon})$ iterations, which is a linear rate, and using (HACC) finds an $\epsilon$-optimal solution to \eqref{RPR} within $O(\log\log \frac{1}{\epsilon})$ iterations, which is a quadratic rate.

\subsection{Related Work}

There are two closely related works that need to be discussed here. \cite{duchi2019solving} studied the PL algorithm for solving RPR \eqref{RPR}, and established its local quadratic convergence under the sharpness condition. But their theoretical analysis requires the subproblem \eqref{PL} to be solved exactly. In practice, \cite{duchi2019solving} proposed to use POGS \tcb{\cite{parikh2014block}}, which is a variant of the alternating direction method of multipliers (ADMM), to solve \eqref{PL} inexactly. However, they did not provide any convergence analysis for the algorithm when the subproblem \eqref{PL} is solved inexactly by POGS. 
\cite{drusvyatskiy2019efficiency} also considered solving \eqref{RPR} for obtaining an $\epsilon$-stationary point as defined in Definition \ref{def:epsilon_stationary_point}.\footnote{The authors of \cite{drusvyatskiy2019efficiency} actually considered solving a more general problem $\min_{x} g(x)+h(c(x))$. Here, for simplicity, we assume that $g=0$ and this does not affect the discussion.} Indeed, several algorithms were proposed and analyzed in \cite{drusvyatskiy2019efficiency}. In particular, a practical algorithm proposed by \cite{drusvyatskiy2019efficiency} uses FISTA \cite{nesterov1988,beck2009fast,nesterov2013gradient} to inexactly solve

\begin{align*}
\min_{\lambda\in\mathbb{R}^m}\phi_{k,\nu}(\lambda) = &\frac{t}{2}\|x^k/t-\nabla c(x^k)^\top\lambda\|_2^2\\
    &- \lambda^\top(c(x^k) - \nabla c(x^k)x^k) +(h_\nu)^\star(\lambda),
\end{align*}
which is the dual problem of a smoothed version of \eqref{PL}. Here $(h_\nu)^\star(\cdot)$ is the Fenchel conjugate of $h_\nu$, and $h_\nu$ is the Moreau envelope of $h$, which is defined as  
$$h_\nu(\lambda) = \inf_{\lambda'\in\mathbb{R}^m} h(\lambda')+ \frac{1}{2\nu}\|\lambda'-\lambda\|_2^2.$$
\cite{drusvyatskiy2019efficiency} proposed to terminate the FISTA when $\mbox{dist}(0;\partial \phi_{k,\nu}(\lambda^{k+1}))\leq \frac{1}{L_h(k+1)^2}$, and then update $x^k$ by
$x^{k+1} = x^k - t\nabla c(x^k)^\top\lambda^{k+1}$. The authors established the overall complexity of this algorithm for suitably chosen parameters $t$ and $\nu$. Compared to \cite{drusvyatskiy2019efficiency}, we use adaptive stopping criteria and provide a better convergence rate based on Assumption \ref{ass:sharpness}.

\section{FISTA for Solving the Subproblem Inexactly}\label{sec:fista}

In this section, we propose to use the FISTA to inexactly solve \eqref{PL} with more practical stopping criteria that guarantee (LACC) and (HACC). Therefore, the convergence results (Theorems \ref{thm:gen_convergence_rate} and \ref{thm:convergence_rate_sharpness}) in Section \ref{sec:main-alg-and-convergence}
still apply here.

For simplicity, we let $t = 1/L$ throughout this section and rewrite \eqref{PL} as follows.
\be\label{PL-rewrite-2}
\min_{z\in \mathbb{R}^n} \ H_k(z) = \frac{1}{2t}\|z\|_2^2 + \|B_kz-d_k\|_1,
\ee
where we denote
$z = x -x^k,$ $B_k = \frac{2}{m}\mbox{diag}(Ax^k)A,$ and $d_k = \frac{1}{m}\left(b - (Ax^k)^2\right)$.  
As a result, (LACC) and (HACC) can be rewritten respectively as 
\begin{equation}\label{low1}
	H_k(z_{k}) - \min_{z\in \mathbb{R}^n} H_k(z)\leq \rho_l\left(H_k(0) - H_k(z_{k})\right),\rho_l\geq 0, 
\end{equation}
and
\begin{equation}\label{high1}
	H_k(z_{k}) - \min_{z\in \mathbb{R}^n} H_k(z)\leq \frac{\rho_h}{2t}\|z_k\|_2^2,0\leq\rho_h<1/4.
\end{equation}
In IPL, we set $x^{k+1} = x^k + z_k$, where $z_k$ satisfies either \eqref{low1} or \eqref{high1}. The dual problem of \eqref{PL-rewrite-2} is
\begin{equation}\label{sub_dual}
	\max_{\lambda\in R^m,\|\lambda\|_\infty\leq 1} D_k(\lambda) = -\frac{t}{2}\left\|B_k^\top\lambda\right\|_2^2-\lambda^\top d_k.
\end{equation}
From weak duality, we know that
\begin{align}\label{weak-duality}
D_k(\lambda) \leq H_k(z), \forall z\in\mathbb{R}^n, \mbox{ and } \|\lambda\|_\infty\leq 1.
\end{align}
Therefore, $D_k(\lambda)$ can serve as a lower bound for $\min_z H_k(z)$, and we can obtain verifiable stopping criteria that are sufficient conditions for \eqref{low1} and \eqref{high1}. This leads to our inexact FISTA for solving \eqref{PL-rewrite-2}, which is summarized in Algorithm \ref{Nesterov}. Here we define $z_k(\lambda) = -tB_k^\top \lambda$. 

\begin{algorithm}[ht]
	\caption{FISTA for Solving \eqref{sub_dual}}
	\label{Nesterov}
	\begin{algorithmic}
		\STATE {\bf Input:} $\lambda^0\in\mathbb{R}^m$ satisfying $\|\lambda^0\|_\infty\leq 1$. $\lambda_a^0=\lambda_b^0=\lambda_c^0 = \lambda^0$, $\gamma_0=1$, $\rho_l>0$ and $\rho_h\in (0,1/4)$.
		\FOR{$j = 0,1,2\ldots$}
		\STATE \begin{align}
	\lambda_c^{j+1} & = (1-\gamma_j)\lambda_a^j+\gamma_j\lambda_b^j, \nonumber\\ 
	\lambda_b^{j+1} & = \argmin_{\|\lambda\|_\infty\leq 1} \ \frac{{\gamma_j}}{2t_{kj}}\|\lambda - \lambda_c^{j+1}\|^2_2 + \nonumber\\
	&(\lambda - \lambda_c^{j+1})^\top\left(tB_{k}B_k^\top\lambda_c^{j+1} + d_k\right),t_{kj}>0,\label{inner_ls} \\ 
	\lambda_a^{j+1} & = (1-\gamma_j)\lambda_a^j+\gamma_j\lambda_b^{j+1},\nonumber\\
	\gamma_{j+1} & =2/\left(1+\sqrt{1+4/\gamma_j^2}\right),\nonumber
\end{align}
\STATE Terminate if one of the following stopping criteria is satisfied:
	\begin{align} 
		\text{(LACC-FISTA)} &\quad  H_k(z_k(\lambda_a^{j+1})) - D_k(\lambda_a^{j+1})) \leq \nonumber\\
		&\rho_l(H_k(0) - H_k(z_k(\lambda_a^{j+1}))),\label{low2} \\ 
		\text{(HACC-FISTA)} &\quad  H_k(z_k(\lambda_a^{j+1})) - D_k(\lambda_a^{j+1})) \leq \nonumber\\
		& \frac{\rho_h}{2t}\|z_k(\lambda_a^{j+1})\|_2^2.\label{high2}
	\end{align}
		\ENDFOR
		\STATE {\bf Output:} $\lambda_k = \lambda_a^{j+1}, z_{k} =  -tB_{k}^\top\lambda_k$, $x^{k+1} = x^k + z_{k}$.
	\end{algorithmic}
\end{algorithm}

\begin{remark}\label{remark-alg-nesterov}
	Here we remark on the step size $t_{kj}$ in \eqref{inner_ls}. It can be chosen as $\left(tL_k^2\right)^{-1}$ for some $L_k\geq \|B_k\|_2$ or chosen by the Armijo backtracking line search. More specifically, suppose that we have an initial step size $t_{k(-1)}>0$. Given the step size $t_{k(j-1)},j\geq 0$, denote 
 {	\begin{align*}
		\lambda_{b,t_s}^{j+1} = \argmin_{\lambda\in\mathbb{R}^m,\|\lambda\|_\infty\leq 1}& \frac{\gamma_j}{2t_{s}}\|\lambda - \lambda_c^{j+1}\|^2_2\\
  &+ (\lambda - \lambda_c^{j+1})^\top\left(tB_{k}B_k^\top\lambda_c^{j+1} + d_k\right),
	\end{align*}}
 and
{ $$\lambda_{a,t_s}^{j+1} = (1-\gamma_j)\lambda_a^j+\gamma_j\lambda_{b,t_s}^{j+1}$$
	$t_{kj}$ can be selected as
	\begin{align*}
		t_{kj} = &\max\{t_s|t_s = 2^{-s}t_{k(j-1)},s\in\mathbb{N},\\
  &\frac{1}{2t_s}\|\lambda_{a,t_s}^{j+1}-\lambda_c^{j+1}\|_2^2\geq \frac{t}{2}\|B_k^\top\lambda_{a,t_s}^{j+1}-B_k^\top\lambda_c^{j+1}\|_2^2\}.
	\end{align*}}
\end{remark}

We now discuss the overall complexity of the IPL (Algorithm \ref{alg:adaptive-IPL}) with the subproblem \eqref{PL} solved inexactly by FISTA (Algorithm \ref{Nesterov}). For ease of presentation, we denote this algorithm as IPL+FISTA. We assume that IPL is terminated after $K_\epsilon$ iterations, when an $\epsilon$-optimal solution is found, i.e., $\Delta(x^{K_\epsilon})\leq \epsilon,\Delta(x^{K_\epsilon-1})>\epsilon$. We use $J_k,k\geq 0$ to denote the number of iterations of FISTA when it is called in the $k$-th iteration (getting $x^{k+1}$ from $x^k$) of IPL. The overall complexity of IPL+FISTA for obtaining an $\epsilon$-optimal solution is thus given by $J(\epsilon) = \sum_{k=0}^{K_\epsilon - 1}J_k$, which equals the times that we call \eqref{inner_ls}. 
Now we are ready to give the overall complexity of IPL+FISTA in Theorem \ref{overall_iplnesterov}.
\begin{theorem}\label{overall_iplnesterov}
	Let $t = 1/L$, $t_{kj} = \left(t\|B_k\|_2^2\right)^{-1}$ and suppose that Assumption \ref{ass:sharpness} holds. 
	\begin{itemize}
		\item[(a)] (Low Accuracy) For the overall complexity of Algorithm \ref{alg:adaptive-IPL}, when using Algorithm \ref{Nesterov} with option \eqref{low2} \tcb{with $\rho_l>0$}, for any $x^0\in\mathbb{R}^n$ that satisfies $\Delta(x^0)\leq E_1$, we have
		\begin{align}\label{thm3-conclusion-a}
  J(\epsilon)\leq E_2/\epsilon,\forall \epsilon>0.
        \end{align}
		Here $E_1,E_2$ are positive constants that only depend on 
  $\{a_i\}_{i=1}^m,\{b_i\}_{i=1}^m,x_\star,\lambda_s,L$ and $\rho_l$, and they will be specified later in the proof.
		\item[(b)] (High Accuracy) For the overall complexity of Algorithm \ref{alg:adaptive-IPL}, when using Algorithm \ref{Nesterov} with option \eqref{high2} \tcb{with $\rho_h\in (0,1/4)$}, there exist positive constants $E_3,E_4$, and $\{\epsilon_i\}_{i=1}^\infty$ with $\epsilon_i>\epsilon_{i+1},\forall i\in\mathbb{N}_+$ and $\lim_{i\rightarrow \infty}\epsilon_i = 0$ such that 
  if $\Delta(x^0)\leq E_3$, then 
		\begin{align}\label{thm3-conclusion-b}
        J(\epsilon_i)\leq E_4/\epsilon_i,\forall i\in\mathbb{N}_+.
        \end{align}
  Here $E_3,E_4$ only depend on 
$\{a_i\}_{i=1}^m,\{b_i\}_{i=1}^m,x_\star,\lambda_s,L$ and $\rho_h$, $\{\epsilon_i\}_{i=1}^\infty$ depends on $\{\Delta(x^i)\}_{i=1}^\infty$ and they will be specified later in the proof. \tcb{Note that \eqref{thm3-conclusion-b} implies that the worst-case overall complexity might be higher than $O(1/\epsilon)$ -- see the explanation below for more details.}
	\end{itemize}
\end{theorem}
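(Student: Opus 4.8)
The plan is to combine the main-iteration count from Theorem~\ref{thm:convergence_rate_sharpness} with a uniform per-subproblem bound on the number of FISTA iterations $J_k$, and then sum over $k=0,\dots,K_\epsilon-1$. First I would invoke Theorem~\ref{thm:convergence_rate_sharpness}: under Assumption~\ref{ass:sharpness} and the relevant initialization hypothesis, with \eqref{low2} the distances contract by the fixed factor $r_l=\tfrac{1+4\rho_l}{2+4\rho_l}<1$, so $K_\epsilon=O(\log(1/\epsilon))$; with \eqref{high2} they satisfy a quadratic recursion $\Delta(x^{k+1})\le \kappa\,\Delta(x^k)^2$ with $\kappa=\tfrac{L(1-3\rho_h)}{\lambda_s(1-4\rho_h)}$ and $\kappa\,\Delta(x^0)\le\tfrac12$, so $K_\epsilon=O(\log\log(1/\epsilon))$. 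Two by-products of the same estimates are crucial: (i) every iterate stays in a fixed ball $\{x:\Delta(x)\le \lambda_s/L\}$ around $x_\star$ (this is where $E_1$ and $E_3$ come from), and (ii) writing $z_k^\star:=S_t(x^k)-x^k$, the quadratic contraction of the \emph{exact} prox-linear map under sharpness gives $\Delta(S_t(x^k))\le \tfrac{L}{2\lambda_s}\Delta(x^k)^2$, hence $\|z_k^\star\|_2\ge \Delta(x^k)-\Delta(S_t(x^k))\ge \tfrac12\Delta(x^k)$. This lower bound on the exact step size is the quantitative handle that drives everything.

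Next I would bound $J_k$. The dual objective $D_k$ in \eqref{sub_dual} has a $t\|B_k\|_2^2$-Lipschitz gradient, and since $x^k$ stays in the above ball, $\|B_k\|_2=\tfrac2m\|\mathrm{diag}(Ax^k)A\|_2$ is uniformly bounded, while $\|\lambda^0\|_\infty\le1$ bounds the distance from $\lambda^0$ to the dual solution set; a standard accelerated primal--dual analysis of Algorithm~\ref{Nesterov} with $t_{kj}=(t\|B_k\|_2^2)^{-1}$ then yields a duality gap $H_k(z_k(\lambda_a^{j}))-D_k(\lambda_a^{j})\le \bar C/(j+1)^2$ with $\bar C$ an absolute constant. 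Because $H_k$ is $\tfrac1t$-strongly convex, this gap also dominates $\tfrac1{2t}\|z_k(\lambda_a^{j})-z_k^\star\|_2^2$ and $H_k(0)-H_k(z_k^\star)\ge\tfrac1{2t}\|z_k^\star\|_2^2$, so $\|z_k(\lambda_a^{j})\|_2\ge\|z_k^\star\|_2-\sqrt{2t\bar C}/(j+1)$ and, in the low-accuracy case, $H_k(0)-H_k(z_k(\lambda_a^{j}))\ge\tfrac1{2t}\|z_k^\star\|_2^2-\bar C/(j+1)^2$. Substituting these into \eqref{low2} and \eqref{high2} and solving the resulting scalar inequality for $j$ shows that whichever stopping criterion is active is triggered after
\[
J_k\ \le\ \frac{c_2\sqrt{\bar C}}{\|z_k^\star\|_2}+1\ \le\ \frac{c_3}{\Delta(x^k)}
\]
iterations, where $c_2,c_3$ depend only on $\{a_i\},\{b_i\},x_\star,\lambda_s,L$ and $\rho_l$ (resp.\ $\rho_h$).

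Finally I would sum: $J(\epsilon)=\sum_{k=0}^{K_\epsilon-1}J_k\le c_3\sum_{k=0}^{K_\epsilon-1}1/\Delta(x^k)+K_\epsilon$. For \eqref{low2}, the monotone geometric decay of $\Delta(x^k)$ makes $1/\Delta(x^k)$ grow by at most the factor $1/r_l$ per step, so the sum is a geometric series controlled by its last term, $\sum_{k=0}^{K_\epsilon-1}1/\Delta(x^k)\le \tfrac{1}{(1-r_l)\Delta(x^{K_\epsilon-1})}<\tfrac{1}{(1-r_l)\epsilon}$, which gives \eqref{thm3-conclusion-a} with $E_2$ collecting $c_3$, $\bar C$, and $1/(1-r_l)$. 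For \eqref{high2}, the bound $\kappa\,\Delta(x^k)\le\tfrac12$ forces $1/\Delta(x^{k+1})\ge 2/\Delta(x^k)$, so the inner costs at least double each main iteration and $\sum_{k=0}^{K_\epsilon-1}1/\Delta(x^k)\le 2/\Delta(x^{K_\epsilon-1})$ is again governed by the final iteration; however, the doubly-exponential decay means the iterate distances $\{\Delta(x^i)\}$ form a sparse set, and the clean comparison of $\Delta(x^{K_\epsilon-1})$ with the target holds transparently exactly when $\epsilon$ is taken from the sequence $\epsilon_i:=\Delta(x^i)$, yielding \eqref{thm3-conclusion-b}; for targets strictly between two consecutive iterate distances the same argument only bounds $J(\epsilon)$ in terms of $\Delta(x^{K_\epsilon-1})$, so the worst case over all $\epsilon$ can exceed $O(1/\epsilon)$, which is the content of the closing remark.

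The step I expect to be the main obstacle is making the FISTA constant $\bar C$ and the exact-step lower bound $\|z_k^\star\|_2\ge\tfrac12\Delta(x^k)$ genuinely \emph{uniform in $k$}: this requires propagating the statement "every $x^k$ lies in a fixed neighborhood of $x_\star$ on which $\|B_k\|_2$ is bounded and the subproblems are uniformly well conditioned'' out of the contraction estimates of Theorem~\ref{thm:convergence_rate_sharpness}, together with a careful bookkeeping of which of \eqref{low2}/\eqref{high2} is active and of the gap between the inexact step $z_k$ actually returned by Algorithm~\ref{Nesterov} and the exact minimizer $z_k^\star$.
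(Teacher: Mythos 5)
Your overall architecture is the same as the paper's (contraction of the outer iterates under sharpness, uniform ball containment to control $\|B_k\|_2$, a per-subproblem bound $J_k$ obtained by comparing the FISTA duality-gap decay against a lower bound on the stopping threshold, and a geometric summation dominated by the last outer iteration), but the central quantitative ingredient is wrong. You assert that Algorithm \ref{Nesterov} achieves a primal--dual gap $H_k(z_k(\lambda_a^{j}))-D_k(\lambda_a^{j})\le \bar C/(j+1)^2$. The accelerated $O(1/j^2)$ rate holds only for the \emph{dual} suboptimality $\max_{\|\lambda\|_\infty\le 1}D_k(\lambda)-D_k(\lambda_a^j)$; recovering a primal point by $z_k(\lambda)=-tB_k^\top\lambda$ costs a square root, since one only gets $\frac{1}{2t}\|z_k(\lambda)-z_k^\star\|_2^2\le D_k^\star-D_k(\lambda)$ and $H_k$ contains the nonsmooth term $\|B_kz-d_k\|_1$, so $H_k(z_k(\lambda))-H_k(z_k^\star)=\Theta(\|z_k(\lambda)-z_k^\star\|_2)$ generically. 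The correct and standard rate for the gap is $O(1/j)$; this is precisely the paper's Lemma \ref{lemma:primal_recovery} (citing D\"unner et al.), and it is not an optional refinement: with the true $O(1/j)$ rate your scalar inequality for $j$ gives $J_k=O(1/\Delta(x^k))$ only in the low-accuracy case and $J_k=O(1/\Delta^2(x^k))$ in the high-accuracy case, not the uniform $c_3/\Delta(x^k)$ you claim. Part (b) still closes to $O(1/\epsilon_i)$, but only via the quadratic recursion $\Delta^2(x^{i-1})\ge E_0\,\epsilon_i$ applied to the sum $\sum_k 1/\Delta^2(x^k)$, which is a different bookkeeping from yours.

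The second, related gap is in part (a): to get $J_k=O(1/\Delta(x^k))$ from the $O(1/j)$ gap under \eqref{low2}, you must lower-bound the threshold $H_k(0)-\min_z H_k(z)$ by a quantity \emph{linear} in $\Delta(x^k)$. Your only lower bound is the strong-convexity estimate $H_k(0)-\min_z H_k(z)\ge \frac{1}{2t}\|z_k^\star\|_2^2\gtrsim \Delta^2(x^k)$, which is quadratic; combined with the correct gap rate it yields $J_k=O(1/\Delta^2(x^k))$ and hence $J(\epsilon)=O(1/\epsilon^2)$, losing the claimed $O(1/\epsilon)$. The paper obtains the needed linear bound $H_k(0)-\min_z H_k(z)\ge \frac{\lambda_s}{2}\Delta(x^k)$ in Lemma \ref{thm:sharp_diff2}, inequality \eqref{lemma8-conclusion-2}, by combining sharpness with the weak-convexity inequality $F_t(x_\star;x^k)\le F(x_\star)+L\Delta^2(x^k)$ evaluated at the feasible point $x_\star$ (or $-x_\star$); this step has no analogue in your proposal and cannot be replaced by strong convexity of $H_k$ alone. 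Your geometric lower bound $\|z_k^\star\|_2\ge\frac12\Delta(x^k)$ via the exact-step quadratic contraction is a legitimate (and arguably cleaner) substitute for the paper's \eqref{lemma8-conclusion-1}, but it only feeds the high-accuracy case.
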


Theorem \ref{overall_iplnesterov} shows that under the (LACC-FISTA) stopping criterion, we need $O(1/\epsilon)$ iterations to find an $\epsilon$-optimal solution, and under the (HACC-FISTA) stopping criterion, we have the same rate with regard to a countable positive sequence that decreases to zero. Theorem \ref{overall_iplnesterov} provides better theoretical rates compared to $O(1/\epsilon^3)$ in \cite{drusvyatskiy2019efficiency}. Moreover, the results in Theorem \ref{overall_iplnesterov} are about the convergence to $\epsilon$-optimal solution, while the results in \cite{drusvyatskiy2019efficiency} are for convergence to $\epsilon$-stationary point. Our results require the sharpness condition, which was not assumed in \cite{drusvyatskiy2019efficiency}.

\tcb{For (b) in Theorem \ref{overall_iplnesterov}, we can only find a countable sequence of diminishing $\epsilon_i$'s to show the $O(1/\epsilon_i)$ rate. We cannot show the $O(1/\epsilon)$ rate for any fixed $\epsilon>0$. This is because of the local quadratic convergence under (HACC) shown in (b) of Theorem \ref{overall_iplnesterov}. For instance, if our initial point $x^0$ satisfies $\Delta(x^0) = 2\epsilon >\epsilon$, under the (HACC), the quadratic convergence result in  Theorem \ref{overall_iplnesterov} (b) implies that $\Delta(x^{1})\leq C\epsilon^2 <\epsilon$ for some constant $C>0$ when $\epsilon$ is sufficiently small. Therefore, IPL finds an $\epsilon$-optimal stationary point with only one main iteration. However, Lemma \ref{subiters_nesterov} (b) indicates that $J(\epsilon) = J_0 = O(1/\epsilon^2)$. Therefore, it is possible that the overall complexity becomes $O(1/\epsilon^2)$, which is higher than $O(1/\epsilon)$. 
}

\section{Numerical Experiments}\label{sec:numerical}

In this section, we conduct numerical experiments to compare our IPL method with existing methods for solving the RPR problem \eqref{RPR}. \tcb{Readers can find the code and datasets to replicate the experiments in this section via \url{https://github.com/zhengzhongpku/IPL-code-share}.} The algorithms that we test include the following ones. 
\begin{itemize}
	\item[(i)] \textbf{PL}: The original proximal linear algorithm proposed by \cite{duchi2019solving} where the subproblem \eqref{PL} is solved by POGS \cite{parikh2014block}. POGS terminates when both the primal residual and the dual residual are small enough. In their code, the authors \cite{duchi2019solving} implemented a two-stage trick that uses a relatively larger tolerance in early iterations and a smaller tolerance in later iterations to terminate the POGS. In our comparison, we use all the default parameters set by the authors in their \tcb{code}\footnote{The code of \cite{duchi2019solving} can be downloaded from \url{https://web.stanford.edu/~jduchi/projects/phase-retrieval-code.tgz}}.
	\item[(ii)] \textbf{Subgradient method}. The subgradient method with geometrically decaying step sizes was proposed by \cite{davis2018subgradient}, and they used this algorithm to solve the RPR \eqref{RPR}. One typical iteration of this algorithm is 
	\begin{align}\label{subgradient-method}
    x^{k+1} = x^k - \lambda_0 q^k \xi_k/\|\xi_k\|_2,k\geq 0,
    \end{align}
	in which $\lambda_0>0,q\in (0,1)$ are hyper-parameters and 
	$\xi_k = \frac{1}{n}\sum_{i=1}^m 2a_i^\top x^k\mbox{sign}((a_i^\top x^k)^2-b_i).$ 
 \item[(iii)] \textbf{IPL-FISTA-Low}, \textbf{IPL-FISTA-High}: 
 our IPL+FISTA algorithm with stopping criteria (LACC-FISTA) and (HACC-FISTA)  in Algorithm \ref{Nesterov}, respectively, and we also used the Armijo backtracking line search discussed in Remark \ref{remark-alg-nesterov}.
\end{itemize}
The initial point for all the tested algorithms is generated by the spectral initialization given in Algorithm 3 in \cite{duchi2019solving}.  All the \tcb{code is} run on a server with Intel Xeon E5-2650v4 (2.2GHz). 
Each task is limited to a single core -- no multi-threading is used.

\subsection{Synthetic Data}


We generate synthetic data following the same manner as \cite{duchi2019solving}. Specifically, $a_i$'s are drawn randomly from the normal distribution $\mathcal{N}(0,I_n)$. The entries of $x_\star\in\{-1,1\}^n$ are drawn randomly from discrete Bernoulli distribution. We denote $p_{\fail} = |\mathbb{I}_2|/m$, where $\mathbb{I}_2$ is generated \tcb{by} random sampling without replacement from $\{1,2,\ldots,m\}$. $\xi_i$'s for these samples in \eqref{intro_corrupted_phase} are independently drawn from Cauchy distribution, which means that
$$b_i=\xi_i=\tilde{M}\tan \left(\frac{\pi}{2}U_i\right),U_i\sim U(0,1),\forall\ i\in \mathbb{I}_{2},$$
where $\tilde{M}$ is the sample median of $\{(a_i^\top x_\star)^2\}_{i=1}^m$. 
For a given threshold $\epsilon>0$, we call an algorithm successful if it returns an $x$ such that the relative error 
\begin{equation}\label{relative-error}
    \Delta(x)/\|x_\star\|_2\leq \epsilon.
\end{equation}
For each combination of $n,k = m/n$ and $p_{\fail}$, we randomly generate 50 instances according to the above procedure, and we report the success rate of the algorithm among the 50 instances. 
For IPL-FISTA-Low and IPL-FISTA-High, we set $\rho_l = \rho_h = 0.24.$ For the subgradient method, we set $q = 0.998$ which is one of the suggested choices of $q$ in \cite{davis2018subgradient}. Moreover, \cite{davis2018subgradient} did not specify how to choose $\lambda_0$, and we set $\lambda_0 = 0.1\|x^0\|_2$ as we found that this choice gave good performance. 
Since we found that in most cases, the relative error given by PL is in the level of $[10^{-5},10^{-3}]$, we set $\epsilon=10^{-3}$ in \eqref{relative-error} for PL. 
In our comparison, we first run PL using the default settings of the \tcb{code} provided by the authors of \cite{duchi2019solving}. If the returned $x$ satisfies \eqref{relative-error} with  $\epsilon=10^{-3}$, then we claim that PL is successful, and we terminate IPL and Subgradient method once they found an iterate with a smaller objective value than the one given by PL. If the iterate returned by PL does not satisfy \eqref{relative-error} with $\epsilon=10^{-3}$, then we claim that PL failed, and we then terminate IPL and Subgradient method when $F(x^k) - F(x_\star)\leq 10^{-7}$. The CPU time is only reported for the successful cases for PL. \tcb{The cost of computing the spectral norm $\|A\|_2$ to obtain $L$  is included in the total CPU time of PL and IPL.}

The simulation results corresponding to $p_\fail=0.05$ and $p_\fail=0.15$ are shown in Figures \ref{comparison_gen1} and \ref{comparison_gen2}, where the $x$-axis corresponds to different values of $m$ since $n=500$ is fixed.
From both Figures \ref{comparison_gen1} and \ref{comparison_gen2}, we can see that the four algorithms have similar success rates, but the total CPU time of IPL-FISTA-Low and IPL-FISTA-High \tcb{that includes the cost of computing the spectral norm $\|A\|_2$} is significantly less than that of others.


\begin{figure}[!t]
	\centering
	\includegraphics[scale=0.5]{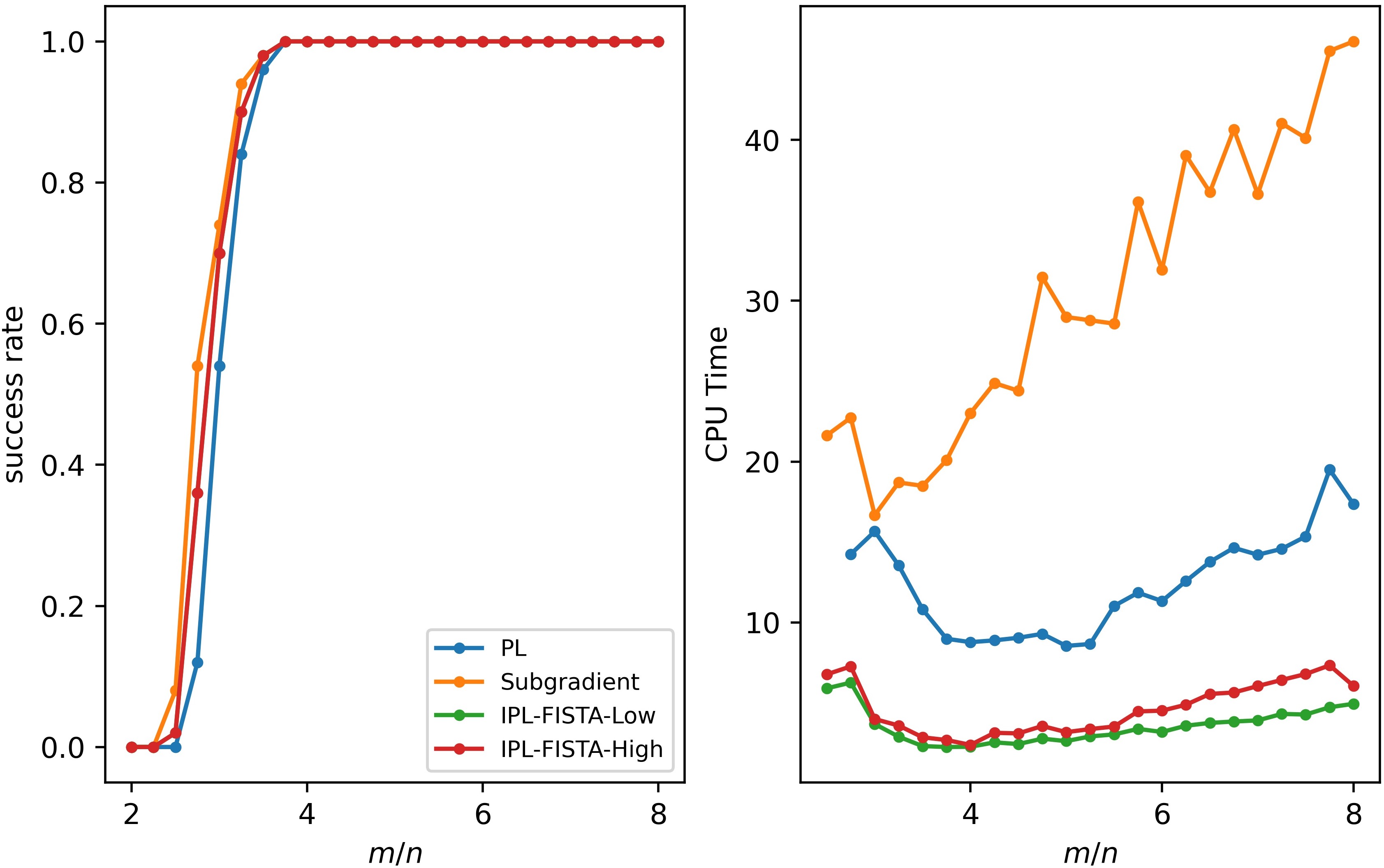}
	\caption{The comparison of success rates and CPU time on synthetic datasets with $p_{\fail} = 0.05$ and $n=500$.}
	\label{comparison_gen1} 
\end{figure}

\begin{figure}[!t]
	\centering
	\includegraphics[scale=0.5]{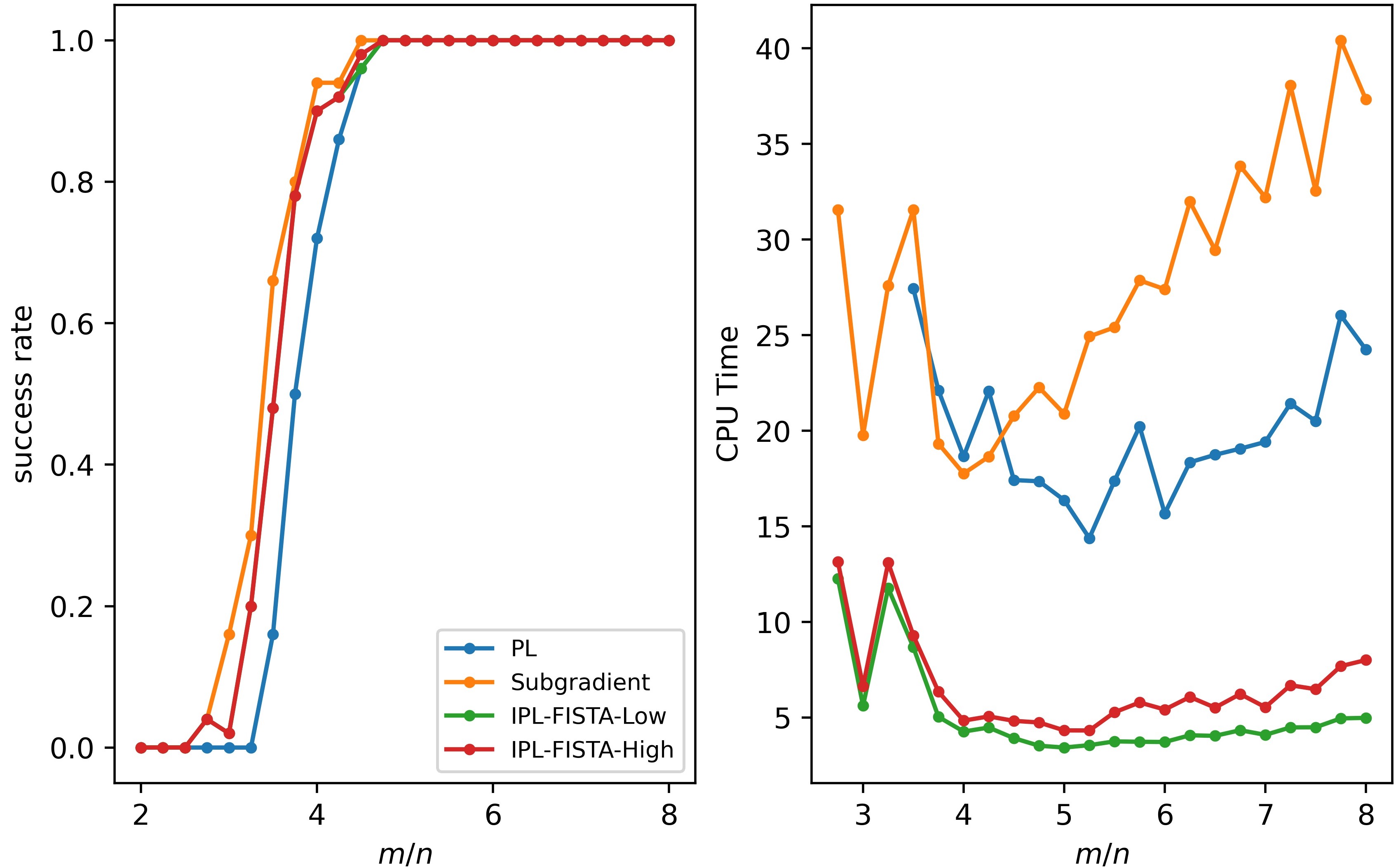}
	\caption{The comparison of success rates and CPU time on synthetic datasets with $p_{\fail} = 0.15$ and $n=500$.}
	\label{comparison_gen2}
\end{figure}

\subsection{\tcb{Image Recovery}}

In this section, we compare the four candidate algorithms on images \tcb{ in a similar manner} as  \cite{duchi2019solving}. In particular, suppose we have an RGB image array $X_\star\in \mathbb{R}^{n_1\times n_2\times 3}$, we construct the signal as $x_\star=[\mbox{vec}(X_{\star});0]\in \mathbb{R}^{n}$, in which 
$n=\min\{2^s\mid s\in N,2^s\geq 3n_1n_2\}$. Let $H_n\in\frac{1}{\sqrt{n}}\{-1,1\}^{n\times n}$ be the Hadamard matrix and $S_j\in \mbox{diag}(\{-1,1\}^n),j=1,2,\ldots,k$ is a random diagonal matrix, and its diagonal elements are independently distributed as discrete uniform distribution. We then let $A=\frac{{\sqrt{m}}}{\sqrt{k}}[H_nS_1;H_nS_2\ldots H_nS_k]$ and we know $L=2$ in this case. The advantage of such a mapping is that it mimics the fast Fourier transform, and calculating $Ay$ is only of time complexity $O(m\log m)$. We first examine the numerical comparisons on a real RNA nanoparticles image\footnote{https://visualsonline.cancer.gov/details.cfm?imageid=11167} as shown in Figure \ref{rna_ori}, and we follow the code of \cite{duchi2019solving} for the experiments on a sub-image with $n=2^{18}$. We also take $p_{\fail}\in\{0.05,0.1,0.15,0.2\}$, $k\in\{3,6\}$ and set the noise in the same way as the synthetic datasets. For each combination of dataset parameters, we run 50 replicates by generating 50 different $A$ and test all the candidate algorithms. We use the same way as the synthetic datasets to define success. For IPL, $\rho_l = \rho_h = 0.24.$ 
For the Subgradient method, $\lambda_0 = 0.1\|x^0\|_2,q = 0.998$. For a replicate, if PL succeeds, the CPU time is the time needed to reach \eqref{relative-error}. 

\begin{figure}[!t]
	\centering
	\includegraphics[width=3.0in]{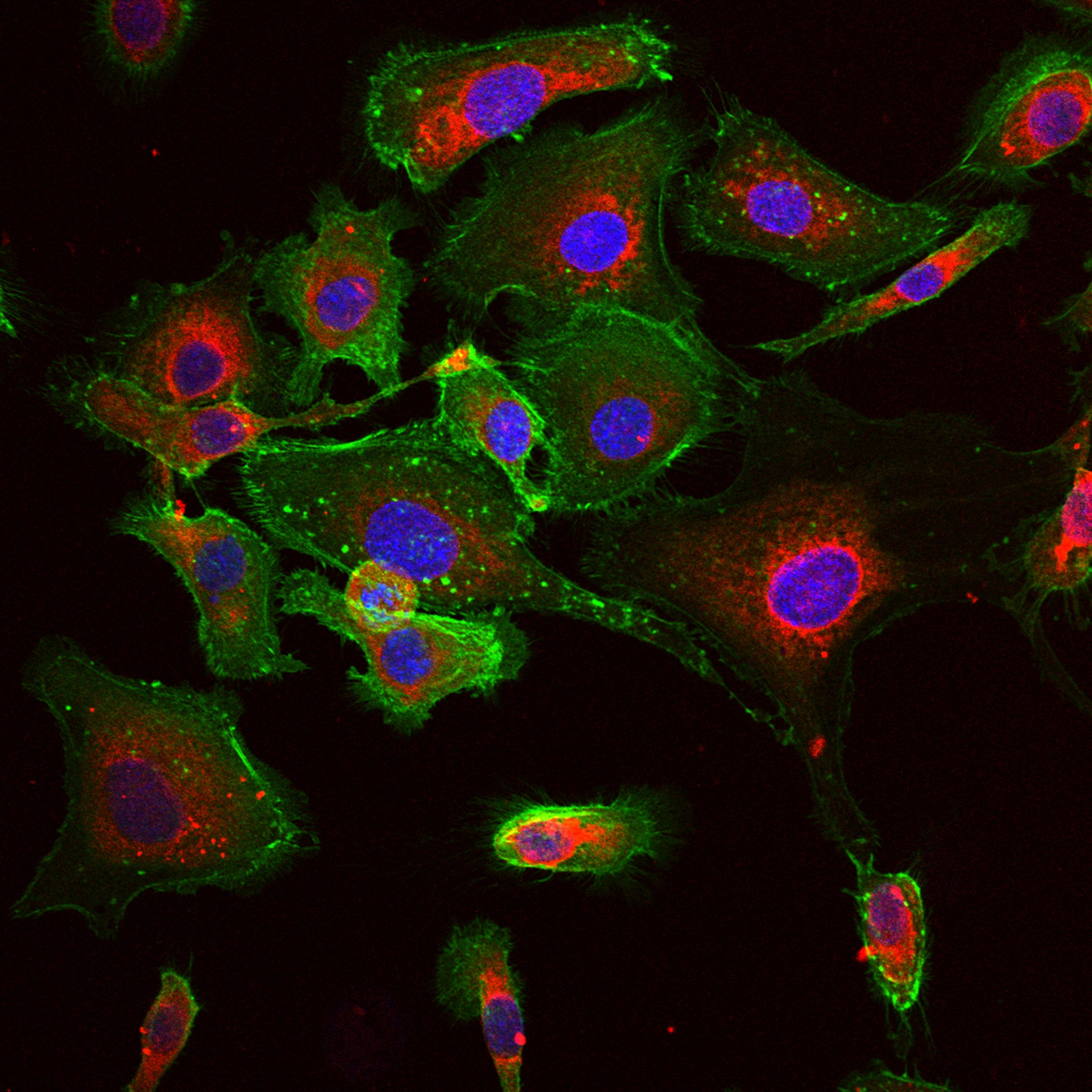}
	\caption{A real RNA nanoparticles image.}
	\label{rna_ori}
\end{figure}


Table \ref{comp_cpu_rna18} reports the median CPU time (in seconds) of the candidate algorithms for $p_{\fail} = 0.1$ and $m/n = 6$ based on two tolerances $\epsilon=10^{-1}$ and $\epsilon=10^{-7}$. We only show the results for this combination of $p_{\fail}$ and $m/n$ because other choices give similar results. It is noted from Table \ref{comp_cpu_rna18} that PL can only reach a relative error that takes value in $[10^{-2},10^{-1}]$, and IPL-FISTA is much more efficient than PL and Subgradient methods.
\begin{table}[]
	\centering
	\caption{The comparison of the median CPU time in seconds for RNA image recovery.}
	\begin{tabular}{|c|c|c|}
	\hline
	 &  $\epsilon = 0.1$& $\epsilon = 10^{-7}$ \\ \hline
	PL             & 3166.93  & NA                 \\ \hline
	Subgradient    & 88.14    & 659.64                 \\ \hline
	IPL-FISTA-Low  & 6.01     & 218.14                     \\ \hline
	IPL-FISTA-High & 48.56    & 175.60                   \\ \hline
	\end{tabular}
	\label{comp_cpu_rna18}
\end{table}


Additional experimental results are reported in Table \ref{multiple_image} for comparing CPU time for the four candidate algorithms. We provide four images with $n$ being at most $2^{22}$. The experiments use the same $m/n$ and $p_{\fail}$, and the CPU time based on ten replications is reported in the form of ``median (Interquartile Range)". Subgradient method, IPL-FISTA-Low and IPL-FISTA-High are terminated with tolerance $\epsilon=10^{-7}$. We can see that PL cannot terminate within 10 hours, and IPL-FISTA still enjoys the best efficiency.


\begin{table}[]
	\centering
	\caption{\tcb{The comparison of the median CPU time in hours with the interquartile range (IQR) in the parentheses for multiple image recovery tasks.}}
\begin{tabular}{|l|l|l|}
\hline
\textbf{}               & \textbf{RNA} ($n = 2^{22}$)       & \textbf{Hubble}\tablefootnote{https://www.nasa.gov/image-feature/goddard/2017/hubble-hones-in-on-a-hypergiants-home} ($n = 2^{22}$)    \\ \hline
\textbf{PL}             & $>10$   & $>10$   \\ \hline
\textbf{Subgradient}    & 4.39 (0.24)                  & 4.72 (0.27)                   \\ \hline
\textbf{IPL-FISTA-Low}  & 1.65 (0.41)                  & 2.07 (0.14)                    \\ \hline
\textbf{IPL-FISTA-High} & 1.30 (0.08)                   & 1.26 (0.06)                    \\ \hline
\textbf{}               & \textbf{James Web}\tablefootnote{https://www.nasa.gov/webbfirstimages} ($n = 2^{21}$) & \textbf{Penn State}\tablefootnote{https://www.britannica.com/topic/Pennsylvania-State-University} ($n = 2^{22}$)\\ \hline
\textbf{PL}             & $>10$                          & $>10$                           \\ \hline
\textbf{Subgradient}    & 1.86 (0.14)                   & 4.84 (0.66)                  \\ \hline
\textbf{IPL-FISTA-Low}  & 0.90 (0.21)                   & 2.23 (0.16)                    \\ \hline
\textbf{IPL-FISTA-High} & 0.56 (0.09)                   & 1.41 (0.21)                    \\ \hline
\end{tabular}
\label{multiple_image}
\end{table}

\section{Proofs}\label{sec:proof}

\subsection{Proof of Theorem \ref{thm:gen_convergence_rate}}

Before proceeding, we first present some lemmas. 

\begin{lemma}[Weak Convexity]\label{thm:gen_weak}
(Discussion of Condition C2 in \cite{duchi2019solving}) The following inequalities hold for any $x,y\in \mathbb{R}^n$, and $L = \frac{2}{m}\|A\|_2^2$. 
	\begin{align}
		\left|F(x)-F(x;y)\right|\leq \frac{L}{2}\|x-y\|_2^2,\label{rel:gen_weak1} \\
		F(x)\leq F_t(x;y), \ \forall \  t\leq \frac{1}{L}.\label{rel:gen_weak2}
	\end{align}
\end{lemma}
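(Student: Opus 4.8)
The plan is to exploit the composite structure $F(x) = h(c(x))$ with $h(z) = \frac{1}{m}\|z\|_1$ being $\frac{1}{m}$-Lipschitz in the $\ell_1$-to... — more precisely, $|h(z) - h(z')| \le \frac{1}{m}\|z - z'\|_1$ — and $c(x) = |Ax|^2 - b$ being a smooth map whose Jacobian is $\nabla c(x) = 2\,\mathrm{diag}(Ax)A$. The key quantitative fact is a Lipschitz bound on $\nabla c$: for any $x,y$, the ``remainder'' $c(x) - c(y) - \nabla c(y)(x-y)$ should be controlled in $\ell_1$ by $\|A\|_2^2\|x-y\|_2^2$. First I would compute this remainder componentwise: the $i$-th entry of $c(x)$ is $(a_i^\top x)^2 - b_i$, so the $i$-th entry of $c(x) - c(y) - \nabla c(y)(x-y)$ equals $(a_i^\top x)^2 - (a_i^\top y)^2 - 2(a_i^\top y)\,a_i^\top(x-y) = (a_i^\top(x-y))^2$, using the elementary identity $u^2 - v^2 - 2v(u-v) = (u-v)^2$ with $u = a_i^\top x$, $v = a_i^\top y$. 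Hence the $\ell_1$-norm of the remainder is $\sum_{i=1}^m (a_i^\top(x-y))^2 = \|A(x-y)\|_2^2 \le \|A\|_2^2\|x-y\|_2^2$.

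Next I would establish \eqref{rel:gen_weak1}. Since $F(x) = h(c(x))$ and $F(x;y) = h(c(y) + \nabla c(y)(x-y))$, the $\frac{1}{m}\|\cdot\|_1$-Lipschitz property of $h$ gives
\[
|F(x) - F(x;y)| = |h(c(x)) - h(c(y) + \nabla c(y)(x-y))| \le \frac{1}{m}\|c(x) - c(y) - \nabla c(y)(x-y)\|_1.
\]
Plugging in the bound from the previous paragraph yields $|F(x) - F(x;y)| \le \frac{1}{m}\|A\|_2^2\|x-y\|_2^2 = \frac{L}{2}\|x-y\|_2^2$, which is exactly \eqref{rel:gen_weak1}.

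Finally, \eqref{rel:gen_weak2} follows directly: by definition $F_t(x;y) = F(x;y) + \frac{1}{2t}\|x-y\|_2^2$, and from \eqref{rel:gen_weak1} we get $F(x) \le F(x;y) + \frac{L}{2}\|x-y\|_2^2$; since $t \le 1/L$ implies $\frac{L}{2} \le \frac{1}{2t}$, we obtain $F(x) \le F(x;y) + \frac{1}{2t}\|x-y\|_2^2 = F_t(x;y)$. I do not anticipate a genuine obstacle here — the only point requiring a little care is the componentwise computation of the Jacobian of $c$ and verifying that the second-order remainder is exactly $(a_i^\top(x-y))^2$ (a nonnegative quantity, which also makes the absolute value in the $\ell_1$ sum disappear cleanly); everything else is an application of Lipschitz continuity of $h$ and the operator-norm bound $\|A(x-y)\|_2 \le \|A\|_2\|x-y\|_2$.
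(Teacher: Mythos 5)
Your proof is correct. The paper itself does not prove this lemma --- it simply defers to the discussion of Condition C2 in \cite{duchi2019solving} --- and your argument is precisely the standard one that reference uses: the $\tfrac{1}{m}$-Lipschitz continuity of $h$ in the $\ell_1$ norm combined with the exact identity $c_i(x)-c_i(y)-\nabla c_i(y)(x-y)=(a_i^\top(x-y))^2$, summed to give $\|A(x-y)\|_2^2\le\|A\|_2^2\|x-y\|_2^2=\tfrac{mL}{2}\|x-y\|_2^2$, with \eqref{rel:gen_weak2} following immediately from \eqref{rel:gen_weak1} and $\tfrac{L}{2}\le\tfrac{1}{2t}$.
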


\begin{lemma}[see, e.g., \tcb{equation (5.2)} in \cite{drusvyatskiy2019efficiency}]\label{thm:gen_descent0}
	The following inequality holds for any $0<t\leq 1/L$ and $x,y\in \mathbb{R}^n.$
	\begin{equation}\label{thm:gen_descent}
		F(x)-F(y)+\varepsilon(y;x)\geq \frac{1}{2t}\|x-S_t(x)\|_2^2.
	\end{equation}
\end{lemma}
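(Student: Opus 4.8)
The plan is to combine two ingredients: the $\tfrac1t$-strong convexity of the proximal model $F_t(\cdot\,;x)$, and the one-sided weak-convexity estimate $F(y)\le F_t(y;x)$ supplied by Lemma~\ref{thm:gen_weak}. The starting observation is the identity $F_t(x;x)=F(x;x)=h(c(x))=F(x)$, which holds because the linearization $c(x)+\nabla c(x)(x-x)$ collapses to $c(x)$ and the quadratic penalty of $F_t$ vanishes at $z=x$. Since $F(\cdot\,;x)$ is convex (it is the $\ell_1$-type norm $h$ composed with an affine map), the function $F_t(\cdot\,;x)=F(\cdot\,;x)+\tfrac1{2t}\|\cdot-x\|_2^2$ is $\tfrac1t$-strongly convex, and $S_t(x)$ is its unique minimizer. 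Applying the standard strong-convexity lower bound with the minimizer $S_t(x)$ and the test point $z=x$ gives
\[
F(x)=F_t(x;x)\ \ge\ F_t(S_t(x);x)+\frac{1}{2t}\|x-S_t(x)\|_2^2 .
\]

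Next I would produce a lower bound for $\varepsilon(y;x)$. By definition $\varepsilon(y;x)=F_t(y;x)-F_t(S_t(x);x)$, and since $t\le 1/L$, inequality \eqref{rel:gen_weak2} of Lemma~\ref{thm:gen_weak} (applied with the roles of the two arguments swapped) yields $F_t(y;x)\ge F(y)$, hence $\varepsilon(y;x)\ge F(y)-F_t(S_t(x);x)$. Adding $F(x)-F(y)$ to both sides and then invoking the previous display,
\[
F(x)-F(y)+\varepsilon(y;x)\ \ge\ F(x)-F_t(S_t(x);x)\ \ge\ \frac{1}{2t}\|x-S_t(x)\|_2^2,
\]
which is exactly \eqref{thm:gen_descent}.

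There is no serious obstacle here: the proof is just the defining property of $S_t$, the identity $F_t(x;x)=F(x)$, and strong convexity. The one place where the step-size restriction $t\le 1/L$ is genuinely used is the passage $F_t(y;x)\ge F(y)$, where the curvature $\tfrac1{2t}\|y-x\|_2^2$ of the proximal term must dominate the model error $\tfrac L2\|y-x\|_2^2$ controlled by \eqref{rel:gen_weak1}; I would be careful to flag this dependence explicitly, since everything downstream (and in particular the descent-type estimates used in Theorem~\ref{thm:gen_convergence_rate}) relies on it.
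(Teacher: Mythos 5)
Your proof is correct: the paper does not prove this lemma itself (it cites equation (5.2) of \cite{drusvyatskiy2019efficiency}), and your argument---the identity $F_t(x;x)=F(x)$, the $\tfrac1t$-strong convexity of $F_t(\cdot\,;x)$ at its minimizer $S_t(x)$, and the bound $F(y)\le F_t(y;x)$ from \eqref{rel:gen_weak2}---is exactly the standard derivation used in that reference. Your remark that the restriction $t\le 1/L$ enters only through $F(y)\le F_t(y;x)$ is also accurate.
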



The following lemma studies one iteration of our IPL algorithm (as summarized in Algorithm \ref{alg:adaptive-IPL}).

\begin{lemma}\label{thm:gen_descent_inexact0}
	If $0<t\leq \frac{1}{L}$, we have the following inequalities.
	\begin{enumerate}
		\item[(a)] When the low accuracy condition \eqref{low0} is satisfied, we have
		\begin{equation}\label{thm:gen_descent_low}
			F(x^k)-F(x^{k+1})\geq \frac{1}{2(1+\rho_l)t}\|x^k-S_t(x^k)\|_2^2.
		\end{equation}
		\item[(b)] When the high accuracy condition \eqref{high0} is satisfied, we have
		\begin{equation}\label{thm:gen_descent_high}
			F(x^k)-F(x^{k+1})\geq \frac{1-2\sqrt{\rho_h}}{2t(1-\sqrt{\rho_h})^2}\|x^k-S_t(x^k)\|_2^2.
		\end{equation}
	\end{enumerate}
\end{lemma}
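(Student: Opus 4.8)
The plan is to combine the sufficient-decrease properties of the two stopping criteria with Lemma \ref{thm:gen_descent0}. Recall that by definition $\varepsilon(x^{k};x^{k}) = F_t(x^{k};x^{k}) - F_t(S_t(x^{k});x^{k})$, and since $F_t(x^k;x^k) = F(x^k;x^k) = F(x^k)$, Lemma \ref{thm:gen_descent0} with $x=x=x^k$ and $y=x^k$ gives
\begin{equation*}
\varepsilon(x^{k};x^{k}) \geq \frac{1}{2t}\|x^k - S_t(x^k)\|_2^2.
\end{equation*}
The strategy in both parts is to lower bound $F(x^k) - F(x^{k+1})$ in terms of $\varepsilon(x^k;x^k)$ and then invoke this inequality. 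The main tool relating $F(x^{k+1})$ to the subproblem value is \eqref{rel:gen_weak2} from Lemma \ref{thm:gen_weak}, namely $F(x^{k+1}) \leq F_t(x^{k+1};x^k)$ for $t\leq 1/L$, so that
\begin{equation*}
F(x^k) - F(x^{k+1}) \geq F(x^k) - F_t(x^{k+1};x^k) = \big(F_t(x^k;x^k) - F_t(S_t(x^k);x^k)\big) - \big(F_t(x^{k+1};x^k) - F_t(S_t(x^k);x^k)\big),
\end{equation*}
i.e. $F(x^k) - F(x^{k+1}) \geq \varepsilon(x^k;x^k) - \varepsilon(x^{k+1};x^k)$.

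\textbf{Part (a).} Under (LACC) \eqref{low0}, we have $\varepsilon(x^{k+1};x^k) \leq \rho_l\big(F_t(x^k;x^k) - F_t(x^{k+1};x^k)\big) \leq \rho_l\big(F(x^k) - F(x^{k+1})\big)$, where the last step again uses \eqref{rel:gen_weak2}. Plugging this into the displayed inequality above yields $(1+\rho_l)\big(F(x^k) - F(x^{k+1})\big) \geq \varepsilon(x^k;x^k) \geq \frac{1}{2t}\|x^k - S_t(x^k)\|_2^2$, which is \eqref{thm:gen_descent_low}.

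\textbf{Part (b).} Under (HACC) \eqref{high0}, we have $\varepsilon(x^{k+1};x^k) \leq \frac{\rho_h}{2t}\|x^{k+1}-x^k\|_2^2$. The difficulty here is that $\|x^{k+1}-x^k\|_2$ is controlled only implicitly, so I would first bound it by a quantity involving $\|x^k - S_t(x^k)\|_2$ and $\varepsilon(x^{k+1};x^k)$ itself. The natural route is via strong convexity of $F_t(\cdot;x^k)$ (it is $\frac{1}{t}$-strongly convex by the quadratic term in \eqref{def-Ft}), which gives $\frac{1}{2t}\|x^{k+1} - S_t(x^k)\|_2^2 \leq F_t(x^{k+1};x^k) - F_t(S_t(x^k);x^k) = \varepsilon(x^{k+1};x^k)$, hence $\|x^{k+1} - S_t(x^k)\|_2 \leq \sqrt{2t\,\varepsilon(x^{k+1};x^k)}$. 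By the triangle inequality, $\|x^{k+1} - x^k\|_2 \leq \|x^k - S_t(x^k)\|_2 + \sqrt{2t\,\varepsilon(x^{k+1};x^k)}$. Writing $r := \|x^k - S_t(x^k)\|_2$ and $e := \varepsilon(x^{k+1};x^k)$, the (HACC) bound becomes $e \leq \frac{\rho_h}{2t}\big(r + \sqrt{2te}\big)^2$, equivalently $\sqrt{2te} \leq \sqrt{\rho_h}\big(r + \sqrt{2te}\big)$, so $\sqrt{2te} \leq \frac{\sqrt{\rho_h}}{1-\sqrt{\rho_h}}\,r$ and therefore $e \leq \frac{\rho_h}{2t(1-\sqrt{\rho_h})^2}\,r^2$. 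Substituting into $F(x^k) - F(x^{k+1}) \geq \varepsilon(x^k;x^k) - e \geq \frac{1}{2t}r^2 - \frac{\rho_h}{2t(1-\sqrt{\rho_h})^2}r^2 = \frac{(1-\sqrt{\rho_h})^2 - \rho_h}{2t(1-\sqrt{\rho_h})^2}r^2 = \frac{1 - 2\sqrt{\rho_h}}{2t(1-\sqrt{\rho_h})^2}r^2$ gives \eqref{thm:gen_descent_high}. The condition $\rho_h < 1/4$ guarantees $1 - 2\sqrt{\rho_h} > 0$, so the constant is positive. The main obstacle is the circular dependence of $\|x^{k+1}-x^k\|_2$ on $\varepsilon(x^{k+1};x^k)$ in the (HACC) case, which is resolved cleanly by the strong-convexity bound and solving the resulting quadratic inequality in $\sqrt{e}$.
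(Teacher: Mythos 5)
Your proof is correct and follows essentially the same route as the paper: both parts reduce to bounding $\varepsilon(x^{k+1};x^k)$ and subtracting it from the descent inequality of Lemma \ref{thm:gen_descent0}, with part (a) identical and part (b) resting on the same two ingredients (the (HACC) bound and $\tfrac{1}{t}$-strong convexity of $F_t(\cdot;x^k)$) to reach the same key estimate $\varepsilon(x^{k+1};x^k)\leq \frac{\rho_h}{2t(1-\sqrt{\rho_h})^2}\|x^k-S_t(x^k)\|_2^2$. The only cosmetic difference is that in (b) you obtain $(1-\sqrt{\rho_h})\|x^{k+1}-x^k\|_2\leq\|x^k-S_t(x^k)\|_2$ via the triangle inequality and a quadratic inequality in $\sqrt{\varepsilon(x^{k+1};x^k)}$, whereas the paper expands $\|u+v\|_2^2-(1-\sqrt{\rho_h})^2\|u\|_2^2$ directly and applies Cauchy--Schwarz.
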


\begin{proof}[Proof of Lemma \ref{thm:gen_descent_inexact0}] We first prove part (a) of Lemma \ref{thm:gen_descent_inexact0} and then prove part (b) of Lemma \ref{thm:gen_descent_inexact0}. 

(a). Letting $x=x^k$ and $y=x^{k+1}$ in \eqref{thm:gen_descent}, we have 
		\begin{align*}
			F(x^k)&-F(x^{k+1})\geq \frac{1}{2t}\|x^k-S_t(x^k)\|_2^2 - \varepsilon(x^{k+1};x^k)\\
			&\geq \frac{1}{2t}\|x^k-S_t(x^k)\|_2^2-\rho_l(F(x^k)-F(x^{k+1})),
		\end{align*}
  where the second inequality follows from \eqref{low0} and \eqref{rel:gen_weak2}. 
		This proves \eqref{thm:gen_descent_low} in Lemma \ref{thm:gen_descent_inexact0}(a).
  
  (b). When \eqref{high0} holds, since $F_t(\cdot;x^{k})$ is $\frac{1}{t}$-strongly convex, we have 
		\begin{equation}\label{proof-lemma3-eq-1}
			\frac{\rho_h}{2t}\|x^k-x^{k+1}\|_2^2\geq \varepsilon(x^{k+1};x^k)\geq \frac{1}{2t}\|x^{k+1}-S_t(x^k)\|_2^2.
		\end{equation}
		Let $u=x^k-x^{k+1}$, $v=x^{k+1}-S_t(x^k)$. From \eqref{proof-lemma3-eq-1} we have 
    \begin{equation}\label{proof-lemma3-eq-2}
        \rho_h\|u\|_2^2\geq \|v\|_2^2,
    \end{equation} 
    and
		\begin{align}
			&\|u+v\|_2^2-(1-\sqrt{\rho_h})^2\|u\|_2^2 \nonumber \\ 
   = & (2\sqrt{\rho_h}-\rho_h)\|u\|_2^2+ 
			2u^\top v+\|v\|_2^2 \nonumber \\ 
   \geq &  \sqrt{\rho_h}(2-\sqrt{\rho_h})\|u\|_2^2-2\|u\|_2\|v\|_2+\|v\|_2^2 \nonumber \\
			= & \left(\sqrt{\rho_h}\|u\|_2-\|v\|_2\right)\left((2-\sqrt{\rho_h})\|u\|_2-\|v\|_2\right) \nonumber  \\ 
   \geq & 0,\label{proof-lemma3-eq-3}
		\end{align}
  where the first inequality is from the Cauchy-\tcr{Schwarz} inequality, and the second inequality follows from \eqref{proof-lemma3-eq-2} and the fact that $\rho_h\in (0,1/4)$. Therefore, from \eqref{proof-lemma3-eq-1} and \eqref{proof-lemma3-eq-3} we have 
		\begin{align*}
			\varepsilon(x^{k+1};x^k)&\leq \frac{\rho_h}{2t}\|x^k-x^{k+1}\|_2^2\\
			&\leq \frac{\rho_h}{2t(1-\sqrt{\rho_h})^2}\|x^k-S_t(x^k)\|_2^2,
		\end{align*}
		which, together with \eqref{thm:gen_descent}, yields
		\begin{align*}
		&	F(x^k)-F(x^{k+1}) \\
  \geq & \frac{1}{2t}\|x^k-S_t(x^k)\|_2^2 - \varepsilon(x^{k+1};x^k) \\
			\geq & \frac{1}{2t}\|x^k-S_t(x^k)\|_2^2-\frac{\rho_h}{2t(1-\sqrt{\rho_h})^2}\|x^k-S_t(x^k)\|_2^2 \\
   = & \frac{1-2\sqrt{\rho_h}}{2t(1-\sqrt{\rho_h})^2}\|x^k-S_t(x^k)\|_2^2.
		\end{align*}
		This proves \eqref{thm:gen_descent_high} in Lemma \ref{thm:gen_descent_inexact0}(b). 
  
  Therefore, the proof of Lemma \ref{thm:gen_descent_inexact0} is complete.  
\end{proof}
Now we are ready to prove Theorem \ref{thm:gen_convergence_rate}. 

\begin{proof}[Proof of Theorem \ref{thm:gen_convergence_rate}]
\tcb{We will prove (a) and (b) together.} Both \eqref{thm:gen_descent_low} and \eqref{thm:gen_descent_high} indicate that $\{x^k\}$ generated by Algorithm \ref{alg:adaptive-IPL} satisfies
	\begin{equation*}
		F(x^k)-F(x^{k+1})\geq \beta t\|\mathcal{G}_t(x^k)\|_2^2,
	\end{equation*}
	in which $\beta>0$ is a constant \tcb{that $\beta = 1/(2(1+\rho_l))$ for LACC and $\beta = (1-2\sqrt{\rho_h})/(2(1-\sqrt{\rho_h})^2)$ for HACC}. Letting $F^\star = \inf_{x\in \mathbb{R}^{n}} F(x)$, we have
	\begin{align*}
		F(x^0)-F^\star& \geq \sum_{k=0}^{K_0-1}\beta t\|\mathcal{G}_t(x^k)\|_2^2\\
		&\geq \beta tK_0\min_{0\leq k\leq K_0-1}\|\mathcal{G}_t(x^k)\|_2^2.
	\end{align*}
	Then, our IPL (Algorithm \ref{alg:adaptive-IPL}) finds an $\epsilon$-stationary point to RPR \eqref{RPR} in  
\tcb{	\begin{equation*}
		\left\lfloor\frac{F(x^0)-F^\star}{\beta t\epsilon^2}\right\rfloor
	\end{equation*}}
 iterations. Therefore, the proof of \tcb{both (a) and (b) in} Theorem \ref{thm:gen_convergence_rate} is complete. 
\end{proof}

\subsection{Proof of Theorem \ref{thm:convergence_rate_sharpness}}


To prove  Theorem \ref{thm:convergence_rate_sharpness}, we need the following lemmas.

\begin{lemma}\label{descent_general}
	(Part of the Proof for Theorem 1 in \cite{duchi2019solving}) Let $t = 1/L$. For any $x^k,x^{k+1}\in \mathbb{R}^n$, we have
	\begin{align}
		&F(x^{k+1}) - F(x_\star)+\frac{L}{4}\|x^{k+1} - x_\star\|_2^2 \nonumber \\
		&\leq L\|x^k - x_\star\|_2^2+2\varepsilon(x^{k+1},x^k),\label{ineq:descent_general}
	\end{align}
	which also holds if we replace $x_\star$ by $-x_\star$.
\end{lemma}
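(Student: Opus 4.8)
The plan is to combine the weak-convexity estimates from Lemma \ref{thm:gen_weak} with the sharpness-type inequality obtained from strong convexity of the subproblem model $F_t(\cdot;x^k)$, following the argument used in the proof of Theorem 1 in \cite{duchi2019solving}. First I would write $x^{k+1} = S_t(x^k) + r$ where $r$ is the inexactness residual, or more directly work with the definition $\varepsilon(x^{k+1};x^k) = F_t(x^{k+1};x^k) - F_t(S_t(x^k);x^k)$. The starting point is the chain
\begin{align*}
F(x^{k+1}) &\leq F_t(x^{k+1};x^k) \\
&= F(x^{k+1};x^k) + \frac{1}{2t}\|x^{k+1}-x^k\|_2^2,
\end{align*}
valid for $t \leq 1/L$ by \eqref{rel:gen_weak2}. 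Because $S_t(x^k)$ minimizes $F_t(\cdot;x^k)$, which is $(1/t)$-strongly convex, we have $F_t(x^{k+1};x^k) \leq F_t(S_t(x^k);x^k) + \varepsilon(x^{k+1};x^k)$, and also $F_t(S_t(x^k);x^k) \leq F_t(z;x^k)$ for every $z$; the natural choice is $z = x_\star$, which gives $F_t(S_t(x^k);x^k) \leq F(x_\star;x^k) + \frac{1}{2t}\|x_\star - x^k\|_2^2$.

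Next I would control $F(x_\star;x^k)$ in terms of $F(x_\star)$ using \eqref{rel:gen_weak1}, namely $F(x_\star;x^k) \leq F(x_\star) + \frac{L}{2}\|x_\star - x^k\|_2^2$. Putting the pieces together with $t = 1/L$ so that $1/(2t) = L/2$, I get
$$F(x^{k+1}) \leq F(x_\star) + L\|x^k - x_\star\|_2^2 + \varepsilon(x^{k+1};x^k).$$
To recover the $\frac{L}{4}\|x^{k+1}-x_\star\|_2^2$ term on the left, I would not bound $F(x^{k+1})$ directly but rather keep $F_t(x^{k+1};x^k)$ and use strong convexity of $F_t(\cdot;x^k)$ around its minimizer together with a second application: $\frac{1}{2t}\|x^{k+1} - S_t(x^k)\|_2^2 \leq \varepsilon(x^{k+1};x^k)$, and then relate $\|x^{k+1}-x_\star\|_2$ to $\|S_t(x^k)-x_\star\|_2$ and $\|x^{k+1}-S_t(x^k)\|_2$ via the triangle inequality and Young's inequality (e.g. $\|x^{k+1}-x_\star\|_2^2 \leq 2\|x^{k+1}-S_t(x^k)\|_2^2 + 2\|S_t(x^k)-x_\star\|_2^2$), absorbing the residual terms into the $2\varepsilon(x^{k+1};x^k)$ budget. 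The symmetry statement (replacing $x_\star$ by $-x_\star$) is immediate since $x_\star$ only entered through being a feasible point $z$ plugged into the model, and $-x_\star$ is equally admissible.

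The main obstacle I anticipate is bookkeeping the constants so that the coefficient of $\|x^k-x_\star\|_2^2$ comes out exactly $L$ and the coefficient of $\varepsilon(x^{k+1};x^k)$ exactly $2$ while still leaving $\frac{L}{4}\|x^{k+1}-x_\star\|_2^2$ on the left — the factor-of-two slack in Young's inequality is what creates room for the $L/4$ rather than $L/2$, so the splitting has to be done carefully. I would double-check against the cited proof in \cite{duchi2019solving} to make sure the feasible point substituted into $F_t(\cdot;x^k)$ and the strong-convexity lower bound are combined in the way that yields precisely \eqref{ineq:descent_general}.
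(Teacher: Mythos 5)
Your plan is correct and follows essentially the same route as the paper's proof: weak convexity to pass from $F(x^{k+1})$ to $F_t(S_t(x^k);x^k)+\varepsilon(x^{k+1};x^k)$, the $(1/t)$-strong convexity of $F_t(\cdot;x^k)$ at its minimizer evaluated at $x_\star$ to produce the extra $-\frac{L}{2}\|x_\star-S_t(x^k)\|_2^2$ term, and then Young's inequality together with $\frac{1}{2t}\|x^{k+1}-S_t(x^k)\|_2^2\leq\varepsilon(x^{k+1};x^k)$ to convert that term into $-\frac{L}{4}\|x^{k+1}-x_\star\|_2^2$ at the cost of a second $\varepsilon(x^{k+1};x^k)$. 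The constants do work out exactly as you anticipate, and the $-x_\star$ case is indeed immediate by symmetry.
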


\begin{proof}[Proof of Lemma \ref{descent_general}]
\tcb{We have}
	\begin{align}
	& \frac{L}{2}\|x_\star-S_t(x^k)\|_2^2\nonumber \\
 \geq & \frac{L}{4}\|x_\star-x^{k+1}\|_2^2 - \frac{L}{2}\|S_t(x^{k})-x^{k+1}\|_2^2 \nonumber\\
	\geq & \frac{L}{4}\|x_\star-x^{k+1}\|_2^2-\varepsilon(x^{k+1};x^k).\label{proof-lemma4-eq-1}
	\end{align}
 \tcb{where the first inequality follows from the Cauchy-Schwarz inequality} and the second one is from the convexity of $\|\cdot\|_2^2$. We then have
	\begin{align*}
		&F(x^{k+1}) \\ 
  \leq & F(x^{k+1};x^k)+\frac{L}{2}\|x^k-x^{k+1}\|_2^2\\
  = & F_t(S_t(x^k);x^k)+\varepsilon(x^{k+1};x^k)\\
		\leq & F(x_\star;x^k)+\frac{L}{2}\|x^k-x_\star\|_2^2-\frac{L}{2}\|x_\star-S_t(x^k)\|_2^2 \\
   & +\varepsilon(x^{k+1};x^k)\\
		\leq & F(x_\star)+L\|x^k-x_\star\|_2^2-\frac{L}{2}\|x_\star-S_t(x^k)\|_2^2+\varepsilon(x^{k+1};x^k),
	\end{align*}
 where the first and the last inequalities are from \eqref{rel:gen_weak1}, and the second inequality is from the strong convexity of $F_t(\cdot;x^k)$. Combining this inequality with \eqref{proof-lemma4-eq-1} yields \eqref{ineq:descent_general}. \tcb{It is easy to find that all the proofs still hold if we replace $x_\star$ with $-x_\star$.} Therefore, the proof of Lemma \ref{descent_general} is complete.
\end{proof}


\begin{lemma}[One-step progress]\label{descent_low_high}
	 Let $t = 1/L$ and suppose that Assumption \ref{ass:sharpness} holds.
	\begin{enumerate}
		\item[(a)] (Low accuracy condition) When \eqref{low0} is satisfied for some $\rho_l\geq 0$, we have
		\begin{align}\label{lemma5-a-eq-1}
			&(1+2\rho_l)\left(F(x^{k+1})-F(x_\star)\right)\\ 
   \leq &2\rho_l (F(x^{k}) - F(x_\star))+L(\Delta(x^k))^2.\nonumber
		\end{align}
		If we also have $F(x^k) - F(x_\star)\leq \lambda_s^2/(2L)$, then we have
		\begin{align}\label{lemma5-a-eq-2}
		    & (1+2\rho_l)\left(F(x^{k+1})-F(x_\star)\right) \\
      \leq & \left(\frac{1}{2}+2\rho_l\right) (F(x^{k}) - F(x_\star)).\nonumber
		\end{align}
		\item [(b)] (High accuracy condition) When \eqref{high0} is satisfied, we have
		\begin{equation}\label{lemma5-b-eq}
		    \lambda_s\Delta(x^{k+1})\leq \frac{L(1-3\rho_h)}{(1-4\rho_h)}(\Delta(x^k))^2. 
		\end{equation}
	\end{enumerate}
\end{lemma}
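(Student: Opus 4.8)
The plan is to prove Lemma \ref{descent_low_high} by combining the one-iteration descent estimate from Lemma \ref{descent_general} with the sharpness condition (Assumption \ref{ass:sharpness}) and the bounds on $\varepsilon(x^{k+1};x^k)$ implied by the two stopping criteria. The key point in both parts is to control the inexactness term $\varepsilon(x^{k+1};x^k)$ appearing in \eqref{ineq:descent_general}: under (LACC) it is bounded in terms of $F(x^k)-F(x^{k+1})$, while under (HACC) it is bounded in terms of $\|x^{k+1}-x^k\|_2^2$, which then must be related back to $\Delta(x^k)$ and $\Delta(x^{k+1})$.

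For part (a), I would start from \eqref{ineq:descent_general} in Lemma \ref{descent_general}, namely
\[
F(x^{k+1})-F(x_\star)+\tfrac{L}{4}\|x^{k+1}-x_\star\|_2^2 \le L\|x^k-x_\star\|_2^2 + 2\varepsilon(x^{k+1};x^k),
\]
and its analogue with $-x_\star$; taking the one that corresponds to $\Delta(x^k)=\|x^k-x_\star\|_2$ (or $\|x^k+x_\star\|_2$), I get $F(x^{k+1})-F(x_\star)\le L(\Delta(x^k))^2+2\varepsilon(x^{k+1};x^k)$. Then I invoke (LACC), i.e. \eqref{low0}, together with the fact that $\varepsilon(x^{k+1};x^k) = F_t(x^{k+1};x^k)-\min_x F_t(x;x^k)$ and that $F(x^{k+1})\le F_t(x^{k+1};x^k)$ and $F_t(x^k;x^k)=F(x^k)$; this yields $\varepsilon(x^{k+1};x^k)\le \rho_l(F(x^k)-F(x^{k+1}))$, exactly as in the proof of Lemma \ref{thm:gen_descent_inexact0}(a). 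Substituting and rearranging $2\rho_l(F(x^k)-F(x^{k+1}))$ to the left gives $(1+2\rho_l)(F(x^{k+1})-F(x_\star))\le 2\rho_l(F(x^k)-F(x_\star))+L(\Delta(x^k))^2$, which is \eqref{lemma5-a-eq-1}. For \eqref{lemma5-a-eq-2}, I apply sharpness in the reverse direction: $L(\Delta(x^k))^2 \le L\big(\tfrac{F(x^k)-F(x_\star)}{\lambda_s}\big)^2 = \tfrac{L}{\lambda_s^2}(F(x^k)-F(x_\star))\cdot(F(x^k)-F(x_\star))$, and the hypothesis $F(x^k)-F(x_\star)\le \lambda_s^2/(2L)$ turns the first factor into $\le 1/2$, so $L(\Delta(x^k))^2\le \tfrac12(F(x^k)-F(x_\star))$; plugging this into \eqref{lemma5-a-eq-1} gives \eqref{lemma5-a-eq-2}.

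For part (b), I again start from \eqref{ineq:descent_general}, but now I drop the nonnegative term $F(x^{k+1})-F(x_\star)\ge 0$ (by \eqref{ineq:sharpness}, or simply optimality) to get $\tfrac{L}{4}\|x^{k+1}-x_\star\|_2^2 \le L(\Delta(x^k))^2 + 2\varepsilon(x^{k+1};x^k)$, and similarly with $-x_\star$; choosing whichever sign minimizes the left-hand side I obtain $\tfrac{L}{4}(\Delta(x^{k+1}))^2 \le L(\Delta(x^k))^2 + 2\varepsilon(x^{k+1};x^k)$. Now I need a bound on $\varepsilon(x^{k+1};x^k)$ in terms of $(\Delta(x^k))^2$. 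From the computation in the proof of Lemma \ref{thm:gen_descent_inexact0}(b), (HACC) gives $\varepsilon(x^{k+1};x^k)\le \tfrac{\rho_h}{2t}\|x^k-x^{k+1}\|_2^2$; moreover $\|x^k-x^{k+1}\|_2^2 = \|x^k - S_t(x^k) + (S_t(x^k)-x^{k+1})\|_2^2$ can be expanded, and using \eqref{proof-lemma3-eq-2} (i.e. $\|x^{k+1}-S_t(x^k)\|_2^2\le\rho_h\|x^k-x^{k+1}\|_2^2$) one bounds $\|x^k-x^{k+1}\|_2$ in terms of $\|x^k-S_t(x^k)\|_2$; combining this with the basic inequality $\|x^k - S_t(x^k)\|_2 = t\|\mathcal G_t(x^k)\|_2$ and the fact (from Lemma \ref{descent_general}'s ingredients, specifically \eqref{proof-lemma4-eq-1} type estimates, together with sharpness giving $F(x^{k+1})-F(x_\star)\ge\lambda_s\Delta(x^{k+1})$) one closes the loop. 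I expect the main obstacle to be bookkeeping the constants so that $L(\Delta(x^k))^2$ and the $\varepsilon$-term combine exactly into $\tfrac{L(1-3\rho_h)}{1-4\rho_h}(\Delta(x^k))^2$ on the right, and using sharpness on the left-hand side ($F(x^{k+1})-F(x_\star)\ge\lambda_s\Delta(x^{k+1})$) rather than the $\tfrac{L}{4}\|x^{k+1}-x_\star\|_2^2$ term to produce the clean recursion $\lambda_s\Delta(x^{k+1})\le \tfrac{L(1-3\rho_h)}{1-4\rho_h}(\Delta(x^k))^2$; this is the genuinely quadratic step and getting the precise coefficient $\tfrac{1-3\rho_h}{1-4\rho_h}$ will require carefully tracking the cross terms from the expansion of $\|x^k - x^{k+1}\|_2^2$ and the constraint $\rho_h<1/4$.
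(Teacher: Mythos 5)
Your part (a) is correct and follows the paper's argument essentially verbatim: bound $\varepsilon(x^{k+1};x^k)\le\rho_l(F(x^k)-F(x^{k+1}))$ via \eqref{rel:gen_weak2}, plug into \eqref{ineq:descent_general}, discard the $\frac{L}{4}\|x^{k+1}\mp x_\star\|_2^2$ term, take the better sign, and use sharpness plus $F(x^k)-F(x_\star)\le\lambda_s^2/(2L)$ to turn $L(\Delta(x^k))^2$ into $\frac12(F(x^k)-F(x_\star))$.

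Part (b) has a genuine gap, and it is precisely the step you flag as "bookkeeping." First, your opening move --- dropping $F(x^{k+1})-F(x_\star)\ge 0$ to get $\frac{L}{4}(\Delta(x^{k+1}))^2\le L(\Delta(x^k))^2+2\varepsilon$ --- discards the only quantity that sharpness can convert into $\lambda_s\Delta(x^{k+1})$, and the inequality it leaves behind is not even a contraction. More importantly, the term you propose to drop on the \emph{left} of \eqref{ineq:descent_general}, namely $\frac{L}{4}\|x^{k+1}-x_\star\|_2^2$, is exactly what the paper's proof needs to keep. The paper bounds
\begin{equation*}
2\varepsilon(x^{k+1};x^k)\le \rho_h L\|x^k-x^{k+1}\|_2^2\le \rho_h L\left(\frac{\|x^{k+1}-x_\star\|_2^2}{4\rho_h}+\frac{\|x^k-x_\star\|_2^2}{1-4\rho_h}\right)
\end{equation*}
by a weighted Young's inequality whose weights are tuned so that the $\frac{L}{4}\|x^{k+1}-x_\star\|_2^2$ piece is absorbed by the matching term on the left-hand side of \eqref{ineq:descent_general}; what survives is $\frac{\rho_h L}{1-4\rho_h}\|x^k-x_\star\|_2^2$, which together with $L\|x^k-x_\star\|_2^2$ gives exactly the coefficient $\frac{(1-3\rho_h)L}{1-4\rho_h}$ (and explains where $\rho_h<1/4$ enters). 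Your alternative route --- bounding $\|x^k-x^{k+1}\|_2$ by $\|x^k-S_t(x^k)\|_2$ as in Lemma \ref{thm:gen_descent_inexact0}(b) and then relating $\|x^k-S_t(x^k)\|_2$ to $\Delta(x^k)$ --- can be made to yield \emph{some} quadratic recursion (one can show $\|x^k-S_t(x^k)\|_2\le(1+\sqrt{2})\Delta(x^k)$ by strong convexity of $F_t(\cdot;x^k)$), but the resulting coefficient is of the form $L\bigl(1+\rho_h(1+\sqrt{2})^2/(1-\sqrt{\rho_h})^2\bigr)$, which is strictly larger than $\frac{(1-3\rho_h)L}{1-4\rho_h}$ for all $\rho_h\in(0,1/4)$, so it does not prove \eqref{lemma5-b-eq} as stated. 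The missing idea is the absorption of the $x^{k+1}$-dependent part of $\|x^k-x^{k+1}\|_2^2$ into the quadratic term already present on the left of \eqref{ineq:descent_general}, rather than any detour through $S_t(x^k)$.
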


\begin{proof}[Proof of Lemma \ref{descent_low_high}] We first prove part (a) of Lemma \ref{descent_low_high} and then prove part (b) of Lemma \ref{descent_low_high}. 

(a). When the low accuracy condition \eqref{low0} holds, from \eqref{rel:gen_weak2} we have
		\begin{align*}
			\varepsilon(x^{k+1};x^k)&\leq  \rho_l(F_t(x^k;x^k)-F_t(x^{k+1};x^k))\\
			&\leq  \rho_l(F(x^k)-F(x^{k+1})),
		\end{align*}
  which, combining with  \eqref{ineq:descent_general}, yields
		\begin{align*}
			&(1+2\rho_l)\left(F(x^{k+1})-F(x_{\star})\right)+\frac{L}{4}\|x^{k+1}-x_{\star}\|_2^2\\
			\leq & 2\rho_l\left(F(x^k)-F(x_{\star})\right)+ L\|x^k-x_{\star}\|_2^2.
		\end{align*}
		Discarding the term $\frac{L}{4}\|x^{k+1}-x_{\star}\|_2^2$, we get 
		\begin{align*}
			&(1+2\rho_l)\left(F(x^{k+1})-F(x_{\star})\right)\\
			\leq & 2\rho_l\left(F(x^k)-F(x_{\star})\right)+ L\|x^k-x_{\star}\|_2^2.
		\end{align*}
		Since \eqref{ineq:descent_general} also holds when $x_\star$ is replaced by $-x_\star$, we also have
		\begin{align*}
			&(1+2\rho_l)\left(F(x^{k+1})-F(x_{\star})\right)\\
			\leq & 2\rho_l\left(F(x^k)-F(x_{\star})\right)+ L\|x^k+x_{\star}\|_2^2.
		\end{align*}
		This proves \eqref{lemma5-a-eq-1}. \eqref{lemma5-a-eq-2} holds because of Assumption \ref{ass:sharpness}. Hence, it proves part (a) of Lemma \ref{descent_low_high}. 
  
(b). When the high accuracy condition \eqref{high0} holds, we have
		\begin{align*}
			&2\varepsilon(x^{k+1};x^k) \\ \leq & \rho_h L\|x^k-x^{k+1}\|_2^2\\
		\leq & \rho_h L\left(\frac{\|x^{k+1}-x_\star\|_2^2}{4\rho_h}+\frac{\|x^k-x_\star\|_2^2}{1-4\rho_h}\right),
		\end{align*}
		where the second inequality is from the Cauchy-Schwarz inequality. Combining with \eqref{ineq:descent_general}, we have
		\begin{equation*}
			F(x^{k+1})-F(x_\star)\leq \frac{(1-3\rho_h)L}{1-4\rho_h}\|x^k-x_\star\|_2^2.
		\end{equation*}
		Similarly, replacing $x_\star$ by $-x_\star$, we have
  \begin{equation*}
			F(x^{k+1})-F(x_\star)\leq \frac{(1-3\rho_h)L}{1-4\rho_h}\|x^k+x_\star\|_2^2.
		\end{equation*}
  Combining the above two inequalities with Assumption \ref{ass:sharpness} yields \eqref{lemma5-b-eq}, which proves part (b) of Lemma \ref{descent_low_high}.

  Therefore, the proof of Lemma \ref{descent_low_high} is complete.
\end{proof}
Now we are ready to give the proof of Theorem \ref{thm:convergence_rate_sharpness}.
\begin{proof} [Proof of Theorem \ref{thm:convergence_rate_sharpness}]
(a). We will prove that for any $k\in \mathbb{N}$,
		\begin{equation}\label{proof-thm2-eq-1}F(x^k) - F(x_\star)\leq \left(F(x^0)-F(x_\star)\right)\left(\frac{1+4\rho_l}{2+4\rho_l}\right)^k
  \end{equation}
		by induction, which immediately leads to the conclusion of (a) by Assumption \ref{ass:sharpness}. First, \eqref{proof-thm2-eq-1} clearly holds for $k = 0$. Now we assume that it holds for $k$. For $k+1$, since \eqref{cond:init_low} holds, we have
  \begin{align*}
  & F(x^k) - F(x_\star) \\
  \leq & \left(F(x^0)-F(x_\star)\right)\left(\frac{1+4\rho_l}{2+4\rho_l}\right)^k \\
  \leq & F(x^0)-F(x_\star) \\
  \leq & \lambda_s^2/(2L).
  \end{align*}
  Using \eqref{lemma5-a-eq-2} directly proves that \eqref{proof-thm2-eq-1} holds when $k$ is replaced by $k+1$. This proves part (a) of Theorem \ref{thm:convergence_rate_sharpness}.

(b). Note that \eqref{lemma5-b-eq} is equivalent to
\begin{equation}\label{lemma5-b-eq-equiv}
    \frac{L\Delta(x^{k+1})(1-3\rho_h)}{\lambda_s(1-4\rho_h)}\leq \left(\frac{L\Delta(x^{k})(1-3\rho_h)}{\lambda_s(1-4\rho_h)}\right)^2.
\end{equation}
Since \eqref{cond:init_high} holds, from \eqref{lemma5-b-eq-equiv} we know that
\begin{equation*}
    \frac{L\Delta(x^{1})(1-3\rho_h)}{\lambda_s(1-4\rho_h)}\leq \left(\frac{1}{2}\right)^2.
\end{equation*}
Plug this back into \eqref{lemma5-b-eq-equiv} and work recursively, we get
\begin{equation*}
    \frac{L\Delta(x^{k})(1-3\rho_h)}{\lambda_s(1-4\rho_h)}\leq \left(\frac{1}{2}\right)^{2^k}.
\end{equation*}
Then, $\Delta(x^k)\rightarrow 0$, which together with \eqref{lemma5-b-eq-equiv} proves the quadratic convergence in part (b) of Theorem \ref{thm:convergence_rate_sharpness}.

Therefore, the proof of Theorem \ref{thm:convergence_rate_sharpness} is complete.
\end{proof}

\subsection{Proof of Theorem \ref{overall_iplnesterov}}
Throughout this subsection, we assume that the assumptions in Theorem \ref{overall_iplnesterov} hold. That is, we assume $t = 1/L$, $t_{kj} = \left(t\|B_k\|_2^2\right)^{-1}$ in Algorithm \ref{Nesterov}, and Assumption \ref{ass:sharpness} holds.
To prove Theorem \ref{overall_iplnesterov}, we first present some lemmas. 
\begin{lemma}[Local Lipschitz Constant of $F(x)$]\label{Lip_F}
	 It holds that
	$$\sup_{x,y\in \mathbb{R}^n,\Delta(x)\leq r,\Delta(y)\leq r,x\neq y} \frac{|F(x) - F(y)|}{\|x-y\|_2}\leq L(\|x_\star\|_2 + r).$$
\end{lemma}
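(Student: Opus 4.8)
\textbf{Proof proposal for Lemma \ref{Lip_F}.}

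The plan is to bound $|F(x)-F(y)|$ by decomposing it across the $m$ terms of $F$ and exploiting the quadratic structure of each term together with the uniform bound $\Delta(x),\Delta(y)\le r$, which controls $\|x\|_2$ and $\|y\|_2$. First I would recall that for each $i$, writing $u_i = a_i^\top x$ and $v_i = a_i^\top y$, the $i$-th summand of $F$ is $|u_i^2 - b_i|$, so by the triangle inequality $\big||u_i^2-b_i| - |v_i^2-b_i|\big| \le |u_i^2 - v_i^2| = |u_i - v_i|\,|u_i + v_i| \le |a_i^\top(x-y)|\,(|a_i^\top x| + |a_i^\top y|)$. Summing over $i$ and applying Cauchy--Schwarz on the index set $\{1,\dots,m\}$ gives
\begin{align*}
|F(x)-F(y)| &\le \frac{1}{m}\sum_{i=1}^m |a_i^\top(x-y)|\,(|a_i^\top x| + |a_i^\top y|)\\
&\le \frac{1}{m}\Big(\sum_{i=1}^m (a_i^\top(x-y))^2\Big)^{1/2}\Big(\sum_{i=1}^m (|a_i^\top x|+|a_i^\top y|)^2\Big)^{1/2}.
\end{align*}
The first factor is $\tfrac1m\|A(x-y)\|_2 \le \tfrac1m\|A\|_2\|x-y\|_2$, and the second, using $(p+q)^2 \le 2p^2+2q^2$, is bounded by $\tfrac1m\sqrt{2}\big(\|Ax\|_2^2 + \|Ay\|_2^2\big)^{1/2} \le \tfrac1m\sqrt{2}\,\|A\|_2\big(\|x\|_2^2+\|y\|_2^2\big)^{1/2}$.

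Next I would convert the norm bounds on $x,y$ into the stated form. Since $\Delta(x)\le r$ means $\min\{\|x-x_\star\|_2,\|x+x_\star\|_2\}\le r$, in either case $\|x\|_2 \le \|x_\star\|_2 + r$, and likewise $\|y\|_2\le \|x_\star\|_2+r$. Hence $\big(\|x\|_2^2+\|y\|_2^2\big)^{1/2} \le \sqrt{2}\,(\|x_\star\|_2+r)$. Combining the two factors,
\[
|F(x)-F(y)| \le \frac{1}{m}\|A\|_2\|x-y\|_2 \cdot \frac{\sqrt 2}{m}\,\cdot\, m\cdot \sqrt2\,(\|x_\star\|_2+r)\cdot\frac{1}{\text{(care with }m)} ,
\]
so I must track the $1/m$ factors carefully: the clean statement is $|F(x)-F(y)| \le \tfrac{2}{m}\|A\|_2^2\,(\|x_\star\|_2+r)\,\|x-y\|_2 = L(\|x_\star\|_2+r)\|x-y\|_2$, using $L = \tfrac{2}{m}\|A\|_2^2$ and the fact that $\|A\|_2 = \|A\|_2$ appears once from each factor while one $\tfrac1m$ is absorbed. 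Dividing by $\|x-y\|_2$ and taking the supremum over admissible $x\neq y$ yields the claim.

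The only real obstacle is bookkeeping: making sure the two $\tfrac1m$ prefactors and the two copies of $\|A\|_2$ assemble into exactly $L(\|x_\star\|_2+r) = \tfrac{2}{m}\|A\|_2^2(\|x_\star\|_2+r)$ rather than something off by a constant or a factor of $m$. I expect the cleanest route is to write $\sum_i (a_i^\top x)^2 = \|Ax\|_2^2 \le \|A\|_2^2\|x\|_2^2$ and $\sum_i(a_i^\top(x-y))^2 = \|A(x-y)\|_2^2 \le \|A\|_2^2\|x-y\|_2^2$ directly, so that after Cauchy--Schwarz one gets $|F(x)-F(y)| \le \tfrac1m\|A\|_2\|x-y\|_2\cdot\sqrt{2}\,\|A\|_2(\|x_\star\|_2+r)\cdot\sqrt2 \cdot \tfrac1m \cdot m$; grouping terms gives $\tfrac{2}{m}\|A\|_2^2(\|x_\star\|_2+r)\|x-y\|_2$ once the stray $m$ from $\sqrt{\sum_i 1}=\sqrt m$ in the Cauchy--Schwarz split is accounted for. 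I would double-check this constant in the final write-up, since it is the one place an error could creep in.
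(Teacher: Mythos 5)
Your argument is correct and is essentially the paper's proof: both reduce $|F(x)-F(y)|$ via the reverse triangle inequality to $\frac1m\sum_i|a_i^\top(x-y)|\,|a_i^\top(x+y)|$ using the difference-of-squares factorization, and then bound this bilinear sum by $\|A\|_2^2$ times the relevant norms (the paper normalizes $u=(x-y)/\|x-y\|_2$, $v=(x+y)/\|x+y\|_2$ and uses AM--GM with $\|x+y\|_2\le 2(\|x_\star\|_2+r)$, while you split $|a_i^\top(x+y)|\le|a_i^\top x|+|a_i^\top y|$ and use Cauchy--Schwarz over the index set; these are interchangeable and yield the same constant). Your worry about a stray $\sqrt{m}$ is unfounded --- Cauchy--Schwarz is applied directly to the two sequences, so the single overall $1/m$ prefactor combines with the two copies of $\|A\|_2$ and the factor $\sqrt{2}\cdot\sqrt{2}=2$ to give exactly $L(\|x_\star\|_2+r)=\frac{2}{m}\|A\|_2^2(\|x_\star\|_2+r)$.
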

\begin{proof}[Proof of Lemma \ref{Lip_F}]
	Denote $u = (x-y)/\|x-y\|_2$ and $v = (x+y)/\|x+y\|_2$ when $x+y\neq 0$. $v$ is set as 0 when $x+y = 0$.
	\begin{align*}
		&\left|F(x) - F(y)\right|/\|x-y\|_2 \\
		=&\frac{1}{m\|x-y\|_2}\left|\sum_{i=1}^m\left|(a_i^\top x)^2-b_i\right|-\sum_{i=1}^m\left|(a_i^\top y)^2-b_i\right|\right|\\
		\leq & \frac{1}{m\|x-y\|_2}\sum_{i=1}^m |(a_i^\top x)^2-(a_i^\top y)^2|\\
	 = & \frac{1}{m\|x-y\|_2}\sum_{i=1}^m |(a_i^\top(x-y))(a_i^\top(x+y))|\\
	= & \|x+y\|_2\frac{1}{m}\sum_{i=1}^m |a_i^\top u|\cdot|a_i^\top v|.
	\end{align*}
	Recalling that $L = \frac{2}{m}\|A\|_2^2 = \frac{2}{m}\|\sum_{i=1}^m a_ia_i^\top\|_2$ as defined in Algorithm \ref{alg:adaptive-IPL} and noticing the fact that when $\Delta(x)\leq r$ and $\Delta(y)\leq r$, we have $\|x+y\|_2\leq {2}(\|x_\star\|_2+r)$, and hence we can claim that
	\begin{align*}
		&\|x+y\|_2\frac{1}{m}\sum_{i=1}^m |a_i^\top u|\times|a_i^\top v|\\
		\leq & \|x+y\|_2\left(u^\top\left(\frac{1}{2m}\sum_{i=1}^m a_ia_i^\top\right)u + v^\top\left(\frac{1}{2m}\sum_{i=1}^m a_ia_i^\top\right)v\right)\\
		\leq & L(\|x_\star\|_2 + r),
	\end{align*}
 which proves the desired result. Therefore, the proof of Lemma \ref{Lip_F} is complete.
\end{proof}

\begin{lemma}\label{lip_F2}
	If Assumption \ref{ass:sharpness} holds, then for any $r\geq 0$, we have
	\begin{align*}
		&\{x\in\mathbb{R}^n:\Delta(x)\leq E(r)\} \\ \subseteq & \{x\in\mathbb{R}^n: F(x) - F(x_\star)\leq r\}\\
		\subseteq & \{x\in\mathbb{R}^n: \Delta(x)\leq r/\lambda_s\},
	\end{align*}
	in which 
 \begin{align}\label{define-E(r)}
 E(r) = \left(\sqrt{L^2\|x_\star\|_2^2+4rL} - L\|x_\star\|_2\right)/(2L).
 \end{align}
 This relationship also indicates that $E(r)\leq r/\lambda_s.$
\end{lemma}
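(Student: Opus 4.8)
The plan is to prove Lemma \ref{lip_F2} as a direct consequence of the sharpness condition (Assumption \ref{ass:sharpness}) together with the local Lipschitz estimate of Lemma \ref{Lip_F}. The statement contains two set inclusions, and I would establish each one separately.

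\textbf{The upper (right-hand) inclusion.} Suppose $F(x) - F(x_\star) \leq r$. By the sharpness inequality \eqref{ineq:sharpness}, we have $\lambda_s \Delta(x) \leq F(x) - F(x_\star) \leq r$, hence $\Delta(x) \leq r/\lambda_s$. This is immediate and requires no computation beyond quoting Assumption \ref{ass:sharpness}.

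\textbf{The lower (left-hand) inclusion.} This is the substantive part. Suppose $\Delta(x) \leq E(r)$ with $E(r)$ as in \eqref{define-E(r)}; I want to conclude $F(x) - F(x_\star) \leq r$. The idea is to bound $F(x) - F(x_\star) = F(x) - F(x_\star)$ by applying Lemma \ref{Lip_F} on a ball of radius $E(r)$ around the optimum. Note $\Delta(x_\star) = 0 \leq E(r)$, so both $x$ and (the appropriate sign of) $x_\star$ lie in $\{y : \Delta(y) \leq E(r)\}$; write $x_\star'$ for whichever of $\pm x_\star$ achieves $\Delta(x) = \|x - x_\star'\|_2$. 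Then Lemma \ref{Lip_F} with $r$ replaced by $E(r)$ gives $|F(x) - F(x_\star)| = |F(x) - F(x_\star')| \leq L(\|x_\star\|_2 + E(r))\|x - x_\star'\|_2 = L(\|x_\star\|_2 + E(r))\Delta(x) \leq L(\|x_\star\|_2 + E(r))E(r)$. It remains to check that $L(\|x_\star\|_2 + E(r))E(r) \leq r$, i.e. that $E(r)$ solves $L E^2 + L\|x_\star\|_2 E - r \leq 0$; since $E(r)$ in \eqref{define-E(r)} is exactly the positive root of $L E^2 + L\|x_\star\|_2 E - r = 0$ (by the quadratic formula), equality holds and the inclusion follows. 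The final remark $E(r) \leq r/\lambda_s$ is then just the composition of the two inclusions: the set $\{\Delta(x)\leq E(r)\}$ sits inside $\{F(x)-F(x_\star)\leq r\}$ which sits inside $\{\Delta(x)\leq r/\lambda_s\}$, and since the point $x$ with $\Delta(x) = E(r)$ (e.g. $x_\star' + E(r)u$ for a unit vector $u$ in an appropriate direction, or simply noting the containment of balls forces $E(r)\le r/\lambda_s$) must satisfy $\Delta(x)\leq r/\lambda_s$, we get $E(r) \leq r/\lambda_s$.

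\textbf{Main obstacle.} There is no deep obstacle here; the only care needed is bookkeeping around the sign ambiguity $\pm x_\star$ when invoking Lemma \ref{Lip_F} (which is stated in terms of $\Delta$, so it already handles this) and verifying that the given $E(r)$ is precisely the relevant root of the quadratic so that the Lipschitz bound closes up to exactly $r$. I would also double-check the edge case $r = 0$, where $E(0) = 0$ and all three sets collapse to $\{\pm x_\star\}$, consistent with the statement.
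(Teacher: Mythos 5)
Your proof is correct and follows essentially the same route as the paper: the left inclusion via Lemma \ref{Lip_F} applied on the ball of radius $E(r)$ together with the observation that $E(r)$ is the positive root of $LE^2 + L\|x_\star\|_2E - r = 0$, and the right inclusion directly from Assumption \ref{ass:sharpness}. Your extra care with the $\pm x_\star$ sign (which the paper dispatches with a ``without loss of generality'') and the explicit justification of $E(r)\leq r/\lambda_s$ by composing the inclusions are both fine and only make explicit what the paper leaves implicit.
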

\begin{proof}[Proof of Lemma \ref{lip_F2}]
For $x\in \mathbb{R}^n$, if $\Delta(x)\leq E(r)$ for some $r\geq 0$, without loss of generality, we assume that $\Delta(x) = \|x-x_\star\|_2$. From Lemma \ref{Lip_F}, we have 
\[
F(x) - F(x_\star)\leq LE(r)(\|x_\star\|_2+E(r)) = r,
\]
which proves the first inclusion. The second inclusion follows immediately from Assumption \ref{ass:sharpness}. Therefore, the proof of Lemma \ref{lip_F2} is complete.
\end{proof}

\begin{lemma}[Bound of $\|B_k\|_2$]\label{bound_b2}
	For any $r\geq 0$, if $\sup_{k\in\mathbb{N}}\Delta(x^k)\leq r$, then
	$$\sup_{k\in\mathbb{N}} \|B_k\|_2\leq B(r):=\frac{2}{m}\|A\|_2(\|x_\star\|_2+r)\max_{i = 1,2,\ldots,m}\|a_i\|_2.$$
\end{lemma}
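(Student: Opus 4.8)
The plan is to bound $\|B_k\|_2 = \|\frac{2}{m}\mathrm{diag}(Ax^k)A\|_2$ by passing to the operator-norm definition and exploiting the diagonal structure. First I would recall that $B_k = \frac{2}{m}\mathrm{diag}(Ax^k)A$, so for any unit vector $w\in\mathbb{R}^n$ we have $B_k w = \frac{2}{m}\mathrm{diag}(Ax^k)(Aw)$, a vector whose $i$-th entry is $\frac{2}{m}(a_i^\top x^k)(a_i^\top w)$. The key step is the submultiplicative bound $\|\mathrm{diag}(u)M\|_2 \le \|\mathrm{diag}(u)\|_2\,\|M\|_2 = \bigl(\max_i |u_i|\bigr)\|M\|_2$; applying this with $u = Ax^k$ and $M = A$ gives
\begin{equation*}
\|B_k\|_2 \le \frac{2}{m}\Bigl(\max_{i}|a_i^\top x^k|\Bigr)\|A\|_2.
\end{equation*}

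Next I would bound $\max_i |a_i^\top x^k|$. By Cauchy--Schwarz, $|a_i^\top x^k| \le \|a_i\|_2\,\|x^k\|_2$, so $\max_i|a_i^\top x^k| \le \bigl(\max_i\|a_i\|_2\bigr)\|x^k\|_2$. Then I would control $\|x^k\|_2$ using the hypothesis $\Delta(x^k)\le r$: since $\Delta(x^k) = \min\{\|x^k - x_\star\|_2, \|x^k+x_\star\|_2\}$, one of $\|x^k \mp x_\star\|_2 \le r$ holds, and in either case the triangle inequality gives $\|x^k\|_2 \le \|x_\star\|_2 + r$. Combining these three bounds yields
\begin{equation*}
\|B_k\|_2 \le \frac{2}{m}\|A\|_2(\|x_\star\|_2 + r)\max_{i=1,\ldots,m}\|a_i\|_2 = B(r),
\end{equation*}
and since the right-hand side is independent of $k$, taking the supremum over $k\in\mathbb{N}$ with $\Delta(x^k)\le r$ (which holds for all $k$ under the hypothesis $\sup_k \Delta(x^k)\le r$) gives the claim.

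This proof is essentially routine; there is no real obstacle beyond being careful that the submultiplicativity step and the bound $\|\mathrm{diag}(u)\|_2 = \max_i|u_i|$ are applied correctly, and that the passage from $\Delta(x^k)\le r$ to $\|x^k\|_2 \le \|x_\star\|_2 + r$ handles both cases in the minimum defining $\Delta$. I would present it in three or four short displayed inequalities. The only mild subtlety worth stating explicitly is that the bound $B(r)$ does not depend on the index $k$, so the same estimate governs every iterate simultaneously and the supremum is immediate.
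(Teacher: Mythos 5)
Your proof is correct and follows the same route as the paper: bound $\|B_k\|_2\le \frac{2}{m}\|\mathrm{diag}(Ax^k)\|_2\|A\|_2 = \frac{2}{m}\|Ax^k\|_\infty\|A\|_2$, then use Cauchy--Schwarz and the triangle inequality (via $\Delta(x^k)\le r$) to get $\|Ax^k\|_\infty\le(\|x_\star\|_2+r)\max_i\|a_i\|_2$. Nothing is missing; the level of detail you give on the two cases in the minimum defining $\Delta$ is, if anything, slightly more careful than the paper's.
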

\begin{proof}[Proof of Lemma \ref{bound_b2}]
	Since $B_k = \frac{2}{m}\mbox{diag}(Ax^k)A$, we have $\|B_k\|_2\leq \frac{2}{m}\|Ax^k\|_\infty \|A\|_2$
	and $\|x^k\|_2\leq \|x_\star\|_2+r.$ The desired result follows by using $\|Ax^k\|_\infty\leq (\|x_\star\|_2+r)\max_{i = 1,2,\ldots,m}\|a_i\|_2$. Therefore, the proof of Lemma \ref{bound_b2} is complete.
\end{proof}

Next, we provide Lemmas \ref{conv_rate_nesterov} and \ref{lemma:primal_recovery} to show the convergence rate for solving the subproblem with Algorithm \ref{Nesterov}. These results can be used for both conditions for (a) and (b).
\begin{lemma}[see \cite{nesterov1988}]\label{conv_rate_nesterov}
In the $j$-th iteration of Algorithm \ref{Nesterov}, we have
	\begin{align*}
		\max_{\|\lambda\|_\infty\leq 1} D_k(\lambda) - D_k(\lambda_a^j)&\leq \frac{tC\|B_k\|_2^2}{(j+1)^2}\|\lambda^0 - \lambda_{k+1}^\star\|_2^2\\
		&\leq \frac{tCm\|B_k\|_2^2}{(j+1)^2},\forall j\in\mathbb{N},
	\end{align*}
	where $\lambda_{k+1}^\star\in \argmax_{\|\lambda\|_\infty\leq 1}\ D_k(\lambda)$ and $C>0$ is universal constant.
\end{lemma}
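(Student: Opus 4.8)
The plan is to recognize Algorithm \ref{Nesterov} as an instance of Nesterov's accelerated gradient method (in the ``estimate sequence'' form of \cite{nesterov1988}) applied to the maximization of the concave objective $D_k$ over the box $\{\lambda : \|\lambda\|_\infty \le 1\}$, equivalently the minimization of $-D_k(\lambda) = \frac{t}{2}\|B_k^\top\lambda\|_2^2 + \lambda^\top d_k$ over that box. First I would note that the smooth part $\lambda \mapsto \frac{t}{2}\|B_k^\top \lambda\|_2^2 + \lambda^\top d_k$ has gradient $t B_k B_k^\top \lambda + d_k$, which is Lipschitz continuous with constant $t\|B_k\|_2^2$; this is exactly the quantity $(t_{kj})^{-1}$ when the constant step size $t_{kj} = (t\|B_k\|_2^2)^{-1}$ is used, so the proximal-gradient-type update \eqref{inner_ls} is the standard accelerated step with the correct step length, and the projection onto $\{\|\lambda\|_\infty \le 1\}$ is the prox-operator of the indicator of the box. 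The sequences $\lambda_a^j, \lambda_b^j, \lambda_c^j$ together with the momentum coefficients $\gamma_j$ defined by $\gamma_{j+1} = 2/(1+\sqrt{1+4/\gamma_j^2})$ match precisely the three-sequence FISTA scheme, so the classical convergence theorem applies verbatim.

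The key steps, in order, are: (i) verify the smoothness/Lipschitz-gradient constant equals $t\|B_k\|_2^2$ and hence the step size choice is admissible; (ii) invoke the standard accelerated-method guarantee, which gives, for the function value gap at the averaged iterate $\lambda_a^j$,
\[
\max_{\|\lambda\|_\infty \le 1} D_k(\lambda) - D_k(\lambda_a^j) \;\le\; \frac{2\,\|\nabla^2(-D_k)\|_2}{(j+1)^2}\,\|\lambda^0 - \lambda_{k+1}^\star\|_2^2 \;=\; \frac{2 t\|B_k\|_2^2}{(j+1)^2}\,\|\lambda^0 - \lambda_{k+1}^\star\|_2^2,
\]
which is the first displayed inequality with $C = 2$ (the exact universal constant is whatever \cite{nesterov1988} yields; it is not important); (iii) bound $\|\lambda^0 - \lambda_{k+1}^\star\|_2^2$. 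For step (iii), since both $\lambda^0$ and $\lambda_{k+1}^\star$ lie in the box $\{\|\lambda\|_\infty \le 1\} \subseteq \mathbb{R}^m$, each coordinate of their difference is at most $2$ in absolute value, so crudely $\|\lambda^0 - \lambda_{k+1}^\star\|_2^2 \le 4m$; absorbing the factor $4$ into the universal constant gives the second inequality $\le \frac{tCm\|B_k\|_2^2}{(j+1)^2}$.

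I do not expect a genuine obstacle here: the lemma is essentially a direct citation of the FISTA/accelerated-gradient rate, and the only mild subtlety is bookkeeping — making sure that (a) the problem is cast as minimizing a convex function with an $L_f$-Lipschitz gradient plus the box indicator, (b) the step size $t_{kj} = (t\|B_k\|_2^2)^{-1}$ is exactly $1/L_f$ so no line search correction is needed, and (c) the constant $C$ is stated as ``universal'' precisely so that the passage from $\|\lambda^0-\lambda^\star_{k+1}\|_2^2$ to $4m$ and any constant discrepancies with the exact statement in \cite{nesterov1988} are harmless. The one place to be slightly careful is that the lemma is stated for the averaged sequence $\lambda_a^j$ (not $\lambda_b^j$ or $\lambda_c^j$); this is consistent with the output of Algorithm \ref{Nesterov}, which returns $\lambda_k = \lambda_a^{j+1}$, and with the standard FISTA analysis in which the $O(1/j^2)$ guarantee holds for exactly the $a$-sequence.
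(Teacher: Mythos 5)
Your proposal is correct and follows exactly the route the paper intends: the paper gives no proof of this lemma and simply cites \cite{nesterov1988}, and your argument — identifying $-D_k$ as a convex quadratic with $t\|B_k\|_2^2$-Lipschitz gradient over the box, recognizing the three-sequence accelerated scheme (note the $\gamma_j/(2t_{kj})$ scaling in \eqref{inner_ls} makes this Nesterov's 1988 variant rather than the two-sequence FISTA, but the $O(1/(j+1)^2)$ guarantee for the $\lambda_a$-sequence is the same), and bounding $\|\lambda^0-\lambda_{k+1}^\star\|_2^2\leq 4m$ via the $\ell_\infty$-ball — is the standard way to instantiate that citation. No gaps.
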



\begin{lemma}[Theorem 4 in \cite{dunner2016primal}]\label{lemma:primal_recovery}
For Algorithm \ref{Nesterov}, there exist universal positive constants $C',C''$ such that, when $j\geq C',j\in \mathbb{N}$, it holds
	\begin{align}\label{lemma10-conclud} H_k(z_k(\lambda_a^j)) - D_k(\lambda_a^j)\leq \frac{C''tm\|B_k\|_2^2}{j+1}.
 \end{align}
\end{lemma}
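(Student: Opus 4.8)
The plan is to recognize the left-hand side of \eqref{lemma10-conclud} as the Frank--Wolfe (linear-minimization) duality gap of the smooth concave dual \eqref{sub_dual} over the box $\mathcal{B} := \{\lambda : \|\lambda\|_\infty \le 1\}$, and then to convert the $O(1/j^2)$ dual-value accuracy from Lemma \ref{conv_rate_nesterov} into an $O(1/j)$ gap accuracy by a smoothness argument that does \emph{not} rely on strong concavity of the dual. First I would write the gap $G(\lambda) := H_k(z_k(\lambda)) - D_k(\lambda)$ in closed form. Since $z_k(\lambda) = -tB_k^\top\lambda$ is exactly the minimizer of the Lagrangian of \eqref{PL-rewrite-2} (this is the computation that produces \eqref{sub_dual}), the gap collapses to a Fenchel--Young gap for the $\ell_1/\ell_\infty$ pair: with $u(\lambda) := B_k z_k(\lambda) - d_k$,
\[ G(\lambda) = \|u(\lambda)\|_1 - \lambda^\top u(\lambda). \]
Writing $\Phi := -D_k$ for the negative dual objective, which is convex and $L_D$-smooth with $L_D = t\|B_k\|_2^2$, one checks the sign identity $u(\lambda) = -\nabla\Phi(\lambda)$, so the gap equals $\max_{\mu\in\mathcal{B}}\langle\nabla\Phi(\lambda),\lambda-\mu\rangle$, i.e. the Frank--Wolfe gap of $\Phi$ on $\mathcal{B}$.

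The central step converts the dual-value accuracy into a gap bound. Let $h := \max_{\|\lambda'\|_\infty\le1}D_k(\lambda') - D_k(\lambda) = \Phi(\lambda)-\Phi^\star$ be the quantity controlled in Lemma \ref{conv_rate_nesterov}, let $\nu$ be the vertex attaining the maximum above, and set $D := \mathrm{diam}_{\ell_2}(\mathcal{B}) = 2\sqrt{m}$. Using $L_D$-smoothness of $\Phi$ along the feasible segment $\lambda + \gamma(\nu-\lambda)$ for $\gamma\in(0,1]$ together with $\Phi(\lambda+\gamma(\nu-\lambda))\ge\Phi^\star$, I obtain $\gamma\,G(\lambda) \le h + \tfrac12\gamma^2 L_D D^2$, hence
\[ G(\lambda)\le \frac{h}{\gamma} + \frac{\gamma L_D D^2}{2}, \qquad \forall\,\gamma\in(0,1]. \]
Optimizing over $\gamma$ gives $G(\lambda)\le D\sqrt{2L_D h}$, valid as soon as the minimizer $\gamma^\star=\sqrt{2h/(L_D D^2)}$ satisfies $\gamma^\star\le 1$, i.e. once $h\le L_D D^2/2$.

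Finally I would substitute the accelerated rate $h \le C\,t m\|B_k\|_2^2/(j+1)^2$ from Lemma \ref{conv_rate_nesterov}. Plugging $L_D=t\|B_k\|_2^2$ and $D=2\sqrt{m}$ into $D\sqrt{2L_D h}$ combines the two factors of $\|B_k\|_2^2$ and of $m$ to yield precisely $C''\,t m\|B_k\|_2^2/(j+1)$ with a universal $C'' = 2\sqrt{2C}$; meanwhile the admissibility threshold $h\le L_D D^2/2$ reduces to $(j+1)^2 \ge \Theta(C)$, which defines the universal index $C'$ beyond which the square-root bound applies. I expect the main obstacle to be this second step: because $B_kB_k^\top$ is singular whenever $m>n$, the dual is not strongly concave, so one cannot bound $\|\lambda_a^j-\lambda^\star\|_2$ by $\sqrt{h}$ and push it through a local Lipschitz bound on $H_k$. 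The Frank--Wolfe/smoothness route is exactly what sidesteps this, and it also explains why the gap rate is $O(1/j)$ rather than $O(1/j^2)$. Care is additionally needed to verify the closed-form gap identity, the sign bookkeeping $u(\lambda)=-\nabla\Phi(\lambda)$, and that all constants stay independent of the outer index $k$ (they depend only on the universal FISTA constant $C$ through $D$, $L_D$, and the box geometry).
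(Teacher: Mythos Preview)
Your argument is correct. The Frank--Wolfe gap identification, the sign bookkeeping $u(\lambda)=-\nabla\Phi(\lambda)$, the smoothness-along-a-segment inequality, and the final square-root combination all check out; your constants $C''=2\sqrt{2C}$ and the threshold $(j+1)^2\ge C/2$ are universal as required.

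The paper takes a different, shorter route: it treats the conversion as a black-box application of Theorem~4 in \cite{dunner2016primal}. That theorem states that if the dual suboptimality is at most $Ctm\|B_k\|_2^2/(j+1)^2$, then the primal--dual gap is at most $\epsilon$ whenever $(j+1)^2\ge\max\{2C,\,2Ct^2m^2\|B_k\|_2^4/\epsilon^2\}$; the paper simply picks $\epsilon = \sqrt{2C}\,tm\|B_k\|_2^2/(j+1)$, reads off $C''=\sqrt{2C}$ and $C'=\sqrt{2C}-1$, and is done. In other words, the paper offloads exactly the mechanism you wrote out (smoothness over the feasible set, no strong concavity needed) to the cited reference, and only does the parameter matching. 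Your approach is self-contained and makes explicit why the rate drops from $O(1/j^2)$ to $O(1/j)$; the paper's approach is terser but opaque without looking up \cite{dunner2016primal}. Your $C''$ is larger by a factor of~$2$ than the paper's, but since the lemma only asserts the existence of universal constants, this is immaterial.
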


\begin{proof}[Proof of Lemma \ref{lemma:primal_recovery}]
Theorem 4 in \cite{dunner2016primal} indicates that if
\begin{align*}
(j+1)^2 & \geq \max\left\{\frac{2Ctm\|B_k\|_2^2/t}{m\|B_k\|_2^2},\frac{2Ctm\|B_k\|_2^2\|B_k\|_2^2m}{(1/t)\epsilon^2}\right\} \\ & = \max\left\{2C,\frac{2Ct^2m^2\|B_k\|_2^4}{\epsilon^2}\right\}
\end{align*}
then we have
$$H_k(z_k(\lambda_a^j)) - D_k(\lambda_a^j) \leq \epsilon.$$ Here $C$ is the constant used in Lemma \ref{conv_rate_nesterov}.
Specifically, by choosing $\epsilon = \frac{tm\|B_k\|_2^2\sqrt{2C}}{j+1}$, we know that if $j\geq C' := \sqrt{2C}-1$ holds, then \eqref{lemma10-conclud} holds, where $C'' = \sqrt{2C}$. Therefore, the proof of Lemma \ref{lemma:primal_recovery} is complete.
\end{proof}

We now define some constants. 
		$$E_1 = \min\{E\left(\lambda_s^2/(2L)\right),E\left(\lambda_sM_1B^2(\lambda_s/(2L))/C'\right)\},$$
		$$E_2 = \frac{(2+4\rho_l)M_1B^2(\lambda_s/(2L))L(\|x_\star\|_2+\lambda_s/(2L))}{\lambda_s},$$
\[
E_3 = \min\{E_0/2,B(E_0/2)\sqrt{M_2/C'},E(\lambda_s^2/(2L))\},
\]
\[E_4= 4M_2B^2(E_0/2)/(3E_0),
\]
and $\{\epsilon_i\}_{i=1}^\infty = \{\Delta(x^i)\}_{i=1}^\infty,$ where $M = \frac{\lambda_s}{2L}\left(\frac{\lambda_s}{2L} + \|x_\star\|_2\right)^{-1}$, $M_1 = {2}C''tm(\rho_l+1)/(\lambda_s\rho_l)$, $M_2 = 4C''t^2m(\rho_h+1)/(\rho_hM^2)$, $E_0 = \frac{\lambda_s(1-4\rho_h)}{(1-3\rho_h)L}$ and $C',C''$ are universal positive constants mentioned in Lemma \ref{lemma:primal_recovery}.

We now formally state the sufficiency of \eqref{low2} and \eqref{high2} for \eqref{low0} and \eqref{high0} in Lemma \ref{sufficiency_nesterov} so that we can use the linear and quadratic convergence rate for main iterations. 

\begin{lemma}\label{sufficiency_nesterov}
	For Algorithm \ref{Nesterov}, \eqref{low2} indicates \eqref{low0} and \eqref{high2} indicates \eqref{high0}.
\end{lemma}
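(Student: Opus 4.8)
\textbf{Proof proposal for Lemma \ref{sufficiency_nesterov}.}

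The plan is to show that the FISTA-based stopping conditions \eqref{low2} and \eqref{high2}, which are stated in terms of the \emph{dual value} $D_k(\lambda_a^{j+1})$, imply the idealized conditions \eqref{low1} and \eqref{high1} (equivalently \eqref{low0} and \eqref{high0}), which are stated in terms of the true optimal value $\min_z H_k(z)$. The only tool needed is weak duality, \eqref{weak-duality}, which gives $D_k(\lambda)\leq \min_{z\in\mathbb{R}^n}H_k(z)$ for every dual-feasible $\lambda$ (in particular for $\lambda=\lambda_a^{j+1}$, which is dual-feasible because the update \eqref{inner_ls} projects onto $\{\|\lambda\|_\infty\leq 1\}$ and $\lambda_a^{j+1}$ is a convex combination of such points together with $\lambda^0$). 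Set $z_k = z_k(\lambda_a^{j+1}) = -tB_k^\top\lambda_a^{j+1}$, which is exactly the primal iterate output by Algorithm \ref{Nesterov}.

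First I would handle \eqref{low2} $\Rightarrow$ \eqref{low0}. By weak duality,
\[
H_k(z_k) - \min_{z\in\mathbb{R}^n} H_k(z) \;\leq\; H_k(z_k) - D_k(\lambda_a^{j+1}),
\]
and the right-hand side is, by the stopping test \eqref{low2}, at most $\rho_l\big(H_k(0) - H_k(z_k)\big)$. Since $x^{k+1}=x^k+z_k$ and $H_k(z) = F_t(x^k+z;x^k)$ (so that $H_k(0)=F_t(x^k;x^k)$ and $H_k(z_k)=F_t(x^{k+1};x^k)$, and $\min_z H_k(z)=F_t(S_t(x^k);x^k)$), this is precisely \eqref{low1}, i.e.\ \eqref{low0}.

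Next, \eqref{high2} $\Rightarrow$ \eqref{high0} is identical in structure: weak duality gives $H_k(z_k)-\min_z H_k(z)\leq H_k(z_k)-D_k(\lambda_a^{j+1})$, the stopping test \eqref{high2} bounds the latter by $\frac{\rho_h}{2t}\|z_k\|_2^2$, and since $\|z_k\|_2 = \|x^{k+1}-x^k\|_2$ this is exactly \eqref{high1}, i.e.\ \eqref{high0}. There is essentially no obstacle here; the lemma is a bookkeeping consequence of weak duality, and the only point meriting a sentence is the observation that the FISTA iterate $\lambda_a^{j+1}$ stays dual-feasible throughout (so that \eqref{weak-duality} applies) and that the change of variables $z=x-x^k$ identifies the $H_k$-quantities with the $F_t$-quantities in \eqref{low0}–\eqref{high0}.
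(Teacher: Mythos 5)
Your proposal is correct and follows essentially the same route as the paper: identify $H_k(z_k(\lambda_a^{j+1}))=F_t(x^{k+1};x^k)$ via the change of variables $z=x-x^k$, and use $D_k(\lambda_a^{j+1})\leq \min_z H_k(z)$ to conclude that the computable duality gap dominates the true suboptimality gap. Your version is in fact slightly more careful than the paper's, which invokes ``strong duality'' where only weak duality \eqref{weak-duality} (plus the dual feasibility of $\lambda_a^{j+1}$, which you rightly verify) is needed.
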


\begin{proof}[Proof of Lemma \ref{sufficiency_nesterov}]
Note that $z_k(\lambda_a^{j+1}) = -tB_k^\top\lambda_a^{j+1}$, and $x^{k+1} = x^k + z_k(\lambda_a^{j+1})$. 
	The conclusion immediately holds because $H_k(z_k(\lambda_a^{j+1})) = F_t(x^{k+1};x^k)$ and
	$$H_k(z_k(\lambda_a^{j+1})) - D_k(\lambda_a^{j+1}))\geq H_k(z_k(\lambda_a^{j+1})) - \min_{z\in\mathbb{R}^n}H_k(z)$$
	which is from strong duality. Therefore, the proof of Lemma \ref{sufficiency_nesterov} is complete.
\end{proof}

Based on Lemma \ref{sufficiency_nesterov}, we prove some properties induced by conditions of Theorem \ref{overall_iplnesterov}. In particular, Lemma \ref{sup_of_seq_low} gives the ones induced by part (a) of Theorem \ref{overall_iplnesterov}, and Lemma \ref{sup_of_seq_high} gives the ones induced by part (b) of Theorem \ref{overall_iplnesterov}. 

\begin{lemma}\label{sup_of_seq_low}
	Assume \tcb{Assumption \ref{ass:sharpness}} and \eqref{low0} \tcr{hold} for any $k\in\mathbb{N}$ with some $\rho_l\geq 0$. If $\Delta(x^0)\leq E_1$, then for any $k\in\mathbb{N}$, we have
\begin{equation}\label{lemma-14-equation-1}
     F(x^k) - F(x_\star)\leq \lambda_s^2/(2L)
 \end{equation}
 and 
 \begin{equation}\label{lemma-14-equation-2}
     \Delta(x^k)\leq \min\{\lambda_s/(2L),M_1B^2(\lambda_s/(2L))/C'\}.
 \end{equation}
	
	\begin{proof}[Proof of Lemma 
 \ref{sup_of_seq_low}]
Note that $E(\cdot)$ defined in \eqref{define-E(r)} is monotonically increasing. Since $\Delta(x^0)\leq E_1$, from Lemma \ref{lip_F2} we have
    \[F(x^0)-F(x_\star)\leq \lambda_s\min\{\lambda_s/(2L),M_1B^2(\lambda_s/(2L))/C'\}.\]
    Therefore, we can apply \eqref{lemma5-a-eq-2} and it implies that 
    \begin{align}\label{lemma14-proof-1}
    F(x^k) - F(x_\star)&\leq F(x^0) - F(x_\star)\\ \nonumber
    &\leq \lambda_s\min\{\lambda_s/(2L),M_1B^2(\lambda_s/(2L))/C'\},
    \end{align}
    which proves \eqref{lemma-14-equation-1}. From \eqref{ineq:sharpness}, we have
		$$\lambda_s\Delta(x^k)\leq F(x^k) - F(x_\star),$$
  which leads to \eqref{lemma-14-equation-2}.  Therefore, the proof of Lemma \ref{sup_of_seq_low} is complete.
	\end{proof}
	
\end{lemma}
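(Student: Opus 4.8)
The plan is to assemble four ingredients already established in the excerpt: the monotonicity of the scalar map $E(\cdot)$ defined in \eqref{define-E(r)}, the two-sided sandwiching of sublevel sets in Lemma~\ref{lip_F2}, the one-step contraction \eqref{lemma5-a-eq-2} of Lemma~\ref{descent_low_high}(a), and the sharpness inequality \eqref{ineq:sharpness}. First I would record that $E(r)$ is increasing in $r\ge 0$ (immediate from its closed form), so that $E_1=\min\{E(\lambda_s^2/(2L)),E(\lambda_s M_1 B^2(\lambda_s/(2L))/C')\}$ coincides with $E(r_0)$, where $r_0:=\lambda_s\min\{\lambda_s/(2L),M_1 B^2(\lambda_s/(2L))/C'\}$, and I would note in passing that $r_0\le\lambda_s^2/(2L)$.

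Next, from the hypothesis $\Delta(x^0)\le E_1=E(r_0)$ and the first inclusion of Lemma~\ref{lip_F2}, I get $F(x^0)-F(x_\star)\le r_0\le\lambda_s^2/(2L)$, so the precondition of \eqref{lemma5-a-eq-2} holds at $k=0$. The heart of the argument is the induction claim that $F(x^k)-F(x_\star)\le r_0$ for every $k\in\mathbb{N}$. Given the claim at step $k$, the bound $F(x^k)-F(x_\star)\le r_0\le\lambda_s^2/(2L)$ lets me apply \eqref{lemma5-a-eq-2}, obtaining $F(x^{k+1})-F(x_\star)\le\frac{1/2+2\rho_l}{1+2\rho_l}\,(F(x^k)-F(x_\star))\le F(x^k)-F(x_\star)\le r_0$, since the contraction factor is strictly below $1$; this closes the induction and in particular proves \eqref{lemma-14-equation-1}.

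Finally I would feed this uniform bound into sharpness: $\lambda_s\Delta(x^k)\le F(x^k)-F(x_\star)\le r_0=\lambda_s\min\{\lambda_s/(2L),M_1 B^2(\lambda_s/(2L))/C'\}$, and dividing by $\lambda_s$ yields \eqref{lemma-14-equation-2}.

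The only point requiring care is the apparent circularity in the induction: invoking \eqref{lemma5-a-eq-2} needs $F(x^k)-F(x_\star)\le\lambda_s^2/(2L)$, which is exactly part of the conclusion. This is resolved by observing that \eqref{lemma5-a-eq-2} is a genuine contraction, so the sublevel region $\{x:F(x)-F(x_\star)\le r_0\}$ is forward-invariant under one IPL step with (LACC); everything else is bookkeeping of the constants and the monotonicity of $E(\cdot)$, so I do not anticipate any real difficulty beyond keeping that invariance argument clean.
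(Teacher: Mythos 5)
Your proposal is correct and follows essentially the same route as the paper: use the monotonicity of $E(\cdot)$ and Lemma~\ref{lip_F2} to bound $F(x^0)-F(x_\star)$, then apply the contraction \eqref{lemma5-a-eq-2} to propagate the bound to all $k$, and finish with sharpness. The only difference is that you make the forward-invariance induction explicit where the paper states it tersely, which is a welcome clarification rather than a deviation.
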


\begin{lemma}\label{sup_of_seq_high}
	Assume \tcb{Assumption \ref{ass:sharpness}} and \eqref{high0} \tcr{hold} for any $k\in\mathbb{N}$ with some \tcb{$0\leq \rho_h<1/4$}. If $\Delta(x^0)\leq E_3$, then for any $k\in\mathbb{N}$, \eqref{lemma-14-equation-1} holds, and the following two inequalities hold
  \begin{align}\label{lemma-14-equation-5}
      \Delta(x^{k+1}) & \leq \frac{1}{2}\Delta(x^k) \\ 
  \label{lemma-14-equation-4}
     \Delta(x^k) & \leq E_3.
 \end{align}
	
\begin{proof}[Proof of Lemma 
 \ref{sup_of_seq_high}]
Based on \eqref{lemma5-b-eq}, we have
  \begin{equation}\label{proof-lemma14-eq-1}
      E_0\Delta(x^{k+1})\leq \Delta^2(x^k),\forall k\geq 0.
  \end{equation} 
We prove \eqref{lemma-14-equation-5} by induction. When $k=0$, noticing that $\Delta(x^0)\leq E_0/2$, by \eqref{proof-lemma14-eq-1}, we have $E_0\Delta(x^1)\leq \Delta(x^0)(E_0/2)$, and therefore \eqref{lemma-14-equation-5} holds for $k=0$. We now assume that \eqref{lemma-14-equation-5} holds for $k<k_0$ for $k_0\in\mathbb{N}$. We then have $\Delta(x^{k_0})\leq \Delta(x^0)\leq E_0/2$ and using \eqref{proof-lemma14-eq-1}, we have
  $E_0\Delta(x^{k_0+1})\leq \Delta(x^{k_0})(E_0/2)$, which implies that \eqref{lemma-14-equation-5} holds for $k=k_0$. This completes the proof for \eqref{lemma-14-equation-5}, which immediately indicates \eqref{lemma-14-equation-4} for any $k\geq 0$. This indicates that $\Delta(x^k)\leq \Delta(x^0)\leq E_3\leq E(\lambda_s^2/(2L))$. By Lemma \ref{lip_F2}, $F(x^k) - F(x_\star)\leq \lambda_s^2/(2L)$, which indicates that \eqref{lemma-14-equation-1} holds for any $k\geq 0$.  Therefore, the proof of Lemma \ref{sup_of_seq_high} is complete.
	\end{proof}	
\end{lemma}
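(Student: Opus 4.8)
The plan is to distill a single contraction recursion from Lemma~\ref{descent_low_high}(b), propagate it by induction, and then read off the function-value bound from Lemma~\ref{lip_F2}. First I would rewrite the one-step progress estimate \eqref{lemma5-b-eq}: dividing both sides by $\tfrac{L(1-3\rho_h)}{1-4\rho_h}$ and using the definition $E_0 = \tfrac{\lambda_s(1-4\rho_h)}{(1-3\rho_h)L}$ turns it into the quadratic recursion $E_0\,\Delta(x^{k+1}) \le (\Delta(x^k))^2$, valid for every $k\ge 0$ as long as \eqref{high0} holds at step $k$. This inequality drives everything that follows.

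Next I would establish \eqref{lemma-14-equation-5}, namely $\Delta(x^{k+1})\le \tfrac12\Delta(x^k)$, by induction on $k$. For the base case, $\Delta(x^0)\le E_3\le E_0/2$ together with the recursion gives $E_0\,\Delta(x^1)\le(\Delta(x^0))^2\le (E_0/2)\,\Delta(x^0)$. For the inductive step, the contractions already proven force $\Delta(x^{k_0})\le\Delta(x^0)\le E_0/2$, so the recursion again yields $E_0\,\Delta(x^{k_0+1})\le(\Delta(x^{k_0}))^2\le(E_0/2)\,\Delta(x^{k_0})$. Telescoping these contractions gives $\Delta(x^k)\le\Delta(x^0)\le E_3$ for all $k$, which is \eqref{lemma-14-equation-4}.

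Finally, for \eqref{lemma-14-equation-1} I would invoke Lemma~\ref{lip_F2}: since $E_3\le E(\lambda_s^2/(2L))$ by the definition of $E_3$, the bound just obtained implies $\Delta(x^k)\le E(\lambda_s^2/(2L))$ for every $k$, and the first inclusion in Lemma~\ref{lip_F2} applied with $r=\lambda_s^2/(2L)$ then produces $F(x^k)-F(x_\star)\le\lambda_s^2/(2L)$, as desired.

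The only subtlety I anticipate is the bookkeeping in the induction: the recursion is contractive only once the current iterate already satisfies $\Delta(x^{k_0})\le E_0/2$, so one must carry along the monotone decrease of $\Delta$ to keep re-verifying that precondition — a one-shot application of the recursion would not suffice. Note that the middle entry of $E_3$, namely $B(E_0/2)\sqrt{M_2/C'}$, plays no role in this lemma; it will only matter later when the inner FISTA iteration counts $J_k$ are bounded.
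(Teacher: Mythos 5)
Your proposal is correct and follows essentially the same route as the paper's proof: rewriting \eqref{lemma5-b-eq} as the recursion $E_0\,\Delta(x^{k+1})\le\Delta^2(x^k)$, running the same induction with the precondition $\Delta(x^0)\le E_3\le E_0/2$ re-verified at each step, and concluding \eqref{lemma-14-equation-1} via the first inclusion of Lemma~\ref{lip_F2} using $E_3\le E(\lambda_s^2/(2L))$. Your observation that the middle term $B(E_0/2)\sqrt{M_2/C'}$ in the definition of $E_3$ is only needed later for bounding the inner iteration counts is also accurate.
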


We can notice that a common property under conditions for (a) or (b) is that \eqref{lemma-14-equation-1} holds for any $k\geq 0$, based on which, we give Lemma \ref{thm:sharp_diff2} to show another common property.
\begin{lemma}\label{thm:sharp_diff2}
If \eqref{lemma-14-equation-1} and \tcb{Assumption \ref{ass:sharpness}} hold, then we have 
	\begin{align}\label{lemma8-conclusion-1}
	   & \frac{\lambda_s}{2L}\left(\frac{\lambda_s}{2L} + \|x_\star\|_2\right)^{-1}\Delta(x^k)\leq \|x^k - S_t(x^k)\|_2,\\
	& \label{lemma8-conclusion-2}{ \frac{1}{2}}\lambda_s\Delta(x^k)\leq F_{t}(x^k;x^k) - F_{t}\left(S_t(x^k);x^k\right).
 \end{align}
\end{lemma}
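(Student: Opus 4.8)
The plan is to establish Lemma \ref{thm:sharp_diff2} by connecting three quantities: the distance $\Delta(x^k)$ to the optimal set, the proximal step length $\|x^k - S_t(x^k)\|_2$, and the objective gap $F_t(x^k;x^k) - F_t(S_t(x^k);x^k)$. The two conclusions are of a similar flavor, so I would prove \eqref{lemma8-conclusion-1} first and then deduce \eqref{lemma8-conclusion-2} from it, or prove both in parallel from a common estimate.

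First I would note that $F_t(x^k;x^k) = F(x^k)$ since the proximal regularizer vanishes at $z = x^k$ and the linearized map $c(x^k) + \nabla c(x^k)(x^k - x^k) = c(x^k)$, so $F(x^k;x^k) = F(x^k)$. Hence the objective gap on the right of \eqref{lemma8-conclusion-2} equals $F(x^k) - F_t(S_t(x^k);x^k)$. Next, using \eqref{rel:gen_weak2} ($F(x) \le F_t(x;x^k)$ for $t \le 1/L$), we get $F(x^{k+1}) \le F_t(x^{k+1};x^k)$; but I actually want a bound the other way to relate the gap to $F$ values, so instead I would bound $F_t(S_t(x^k);x^k) \ge F(S_t(x^k)) \ge F(x_\star)$ again via \eqref{rel:gen_weak2}. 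This yields
\[
F_t(x^k;x^k) - F_t(S_t(x^k);x^k) \le F(x^k) - F(x_\star).
\]
That is the wrong direction for \eqref{lemma8-conclusion-2} (which needs a lower bound on the gap). So the real work is a \emph{lower} bound on the gap, which must come from the progress the proximal step makes. The key identity is that $S_t(x^k)$ minimizes a $\frac1t$-strongly convex function, so $F_t(x^k;x^k) - F_t(S_t(x^k);x^k) \ge \frac{1}{2t}\|x^k - S_t(x^k)\|_2^2 = \frac{L}{2}\|x^k - S_t(x^k)\|_2^2$. Thus \eqref{lemma8-conclusion-2} will follow immediately from \eqref{lemma8-conclusion-1} provided the constant works out: we would need $\frac{L}{2}\left(\frac{\lambda_s}{2L}\big(\frac{\lambda_s}{2L}+\|x_\star\|_2\big)^{-1}\Delta(x^k)\right)^2 \ge \tfrac12\lambda_s\Delta(x^k)$, i.e. the squared lower bound dominates a linear one; here \eqref{lemma-14-equation-1} gives $\Delta(x^k) \le \lambda_s/(2L)$, which should make the quadratic term large enough — this is the routine but slightly delicate constant-chasing step.

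For \eqref{lemma8-conclusion-1}, the plan is: by Lemma \ref{descent_general} applied with $x^{k+1}$ replaced by $S_t(x^k)$ (equivalently, run one IPL step solving the subproblem \emph{exactly}, so $\varepsilon(S_t(x^k);x^k) = 0$), or more directly by the argument inside the proof of Lemma \ref{descent_general}, we obtain $F(S_t(x^k)) - F(x_\star) + \frac{L}{4}\|S_t(x^k) - x_\star\|_2^2 \le L\|x^k - x_\star\|_2^2$, hence $\|S_t(x^k) - x_\star\|_2^2 \le 4\|x^k - x_\star\|_2^2$, giving $\|x^k - S_t(x^k)\|_2 \ge \|x^k - x_\star\|_2 - \|S_t(x^k)-x_\star\|_2$ — but that could be negative, so triangle inequality alone is too crude. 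Instead I would combine sharpness with the local Lipschitz bound (Lemma \ref{Lip_F}): by \eqref{rel:gen_weak1} and strong convexity, $F(x^k) - F(S_t(x^k)) \ge \frac{L}{2}\|x^k - S_t(x^k)\|_2^2 - \frac{L}{2}\|x^k - S_t(x^k)\|_2^2 = 0$ is again too weak, so the cleaner route is: $\lambda_s \Delta(x^k) \le F(x^k) - F(x_\star) \le F(x^k) - F_t(S_t(x^k);x^k) + (F_t(S_t(x^k);x^k) - F(x_\star))$; bound the first difference using that $F(x^k) = F_t(x^k;x^k)$ and Lemma \ref{Lip_F}-type Lipschitz continuity of $F(\cdot;x^k)$ on the relevant ball to say $F_t(x^k;x^k) - F_t(S_t(x^k);x^k) \le L(\|x_\star\|_2 + \Delta(x^k) \text{-type radius})\|x^k - S_t(x^k)\|_2$, and bound the second by $0$ since $F_t(S_t(x^k);x^k) \ge F(S_t(x^k)) \ge F(x_\star)$. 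Using \eqref{lemma-14-equation-1} to control the radius by $\lambda_s/(2L) + \|x_\star\|_2$ then rearranges to exactly \eqref{lemma8-conclusion-1}.

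The main obstacle I anticipate is making the Lipschitz-type bound for $F_t(x^k;x^k) - F_t(S_t(x^k);x^k)$ rigorous: one needs the surrogate $F(\cdot;x^k)$ to be Lipschitz with constant $L(\|x_\star\|_2 + r)$ on a ball containing both $x^k$ and $S_t(x^k)$, which requires first showing $S_t(x^k)$ stays in such a ball (e.g. via the $\|S_t(x^k)-x_\star\|_2 \le 2\|x^k - x_\star\|_2$ bound extracted from Lemma \ref{descent_general}), and then verifying the Lipschitz constant of $z \mapsto \frac1m\||Ax^k|^2 + \nabla c(x^k)(z-x^k)\|$ equals the gradient-of-$c$ bound, which is essentially the computation in Lemma \ref{Lip_F}. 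Once the ball membership and the Lipschitz constant are pinned down, the rest is the algebraic rearrangement using \eqref{lemma-14-equation-1} and Assumption \ref{ass:sharpness}, and \eqref{lemma8-conclusion-2} drops out from \eqref{lemma8-conclusion-1} together with the strong-convexity lower bound $F_t(x^k;x^k) - F_t(S_t(x^k);x^k) \ge \frac{L}{2}\|x^k - S_t(x^k)\|_2^2$.
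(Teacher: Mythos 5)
There is a genuine gap, and it sits exactly in the step you flagged as ``routine but slightly delicate constant-chasing'': deducing \eqref{lemma8-conclusion-2} from \eqref{lemma8-conclusion-1} via the strong-convexity bound $F_t(x^k;x^k)-F_t(S_t(x^k);x^k)\ge\frac{L}{2}\|x^k-S_t(x^k)\|_2^2$ cannot work. Writing $M=\frac{\lambda_s}{2L}\left(\frac{\lambda_s}{2L}+\|x_\star\|_2\right)^{-1}\le 1$, that route lower-bounds the gap by $\frac{L}{2}M^2\Delta(x^k)^2$, and for this to dominate $\frac{1}{2}\lambda_s\Delta(x^k)$ you would need $\Delta(x^k)\ge\lambda_s/(LM^2)\ge\lambda_s/L$, whereas \eqref{lemma-14-equation-1} together with Assumption \ref{ass:sharpness} forces $\Delta(x^k)\le\lambda_s/(2L)$. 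A bound quadratic in $\Delta(x^k)$ can never dominate a linear one in the small-$\Delta$ regime; the upper bound on $\Delta(x^k)$ makes the comparison worse, not better. Relatedly, your decomposition for \eqref{lemma8-conclusion-1} writes $F(x^k)-F(x_\star)$ as the gap plus $F_t(S_t(x^k);x^k)-F(x_\star)$ and then ``bounds the second by $0$'' --- but you only show that term is nonnegative, which is the wrong direction: to conclude you need an \emph{upper} bound on it.

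The fix is to reverse the logical order and supply that missing upper bound, which is what the paper does. By optimality of $S_t(x^k)$ for $F_t(\cdot;x^k)$ and by \eqref{rel:gen_weak1}, one has $F_t(S_t(x^k);x^k)\le F_t(x_\star;x^k)\le F(x_\star)+L\Delta(x^k)^2$ (taking whichever of $\pm x_\star$ realizes $\Delta(x^k)$), hence $F_t(x^k;x^k)-F_t(S_t(x^k);x^k)\ge\lambda_s\Delta(x^k)-L\Delta(x^k)^2\ge\frac{\lambda_s}{2}\Delta(x^k)$, where the last step absorbs the quadratic term using $\Delta(x^k)\le\lambda_s/(2L)$. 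This is \eqref{lemma8-conclusion-2}, proved first and directly. Then \eqref{lemma8-conclusion-1} follows by bounding the decrease \emph{above} by a Lipschitz constant times the step length: the paper does this through $F$ itself, using \eqref{lemma5-a-eq-2} with $\rho_l=0$ to get $F(x^k)-F(S_t(x^k))\ge\frac{1}{2}(F(x^k)-F(x_\star))$ and Lemma \ref{Lip_F} (which requires also checking $\Delta(S_t(x^k))\le\lambda_s/(2L)$); your alternative of using the Lipschitz constant of the linearized model $F(\cdot;x^k)$, which is $L\|x^k\|_2\le L(\|x_\star\|_2+\lambda_s/(2L))$ by the same computation as Lemma \ref{Lip_F}, would also work and avoids localizing $S_t(x^k)$. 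But as written, neither conclusion of the lemma is actually established by your argument.
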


\begin{proof}[Proof of Lemma \ref{thm:sharp_diff2}]
 
For fixed $k\geq 0$, we define \tcb{$\tilde{x}^0 = x^k$ and $\tilde{x}^1 = S_t(x^k)$.}
Therefore $\tilde{x}^0$ and $\tilde{x}^{1}$ satisfy \eqref{low0} (with $k$ replaced by $0$ and $x$ replaced by $\tilde{x}$) with $\rho_l = 0$.
Hence we have $\lambda_s\Delta(\tilde{x}^0)\leq F(\tilde{x}^0)-F(x_\star)\leq \lambda_s^2/(2L)$ and 
	\begin{align}
		&F(\tilde{x}^0) - F(\tilde{x}^1) \nonumber \\
  = & F(\tilde{x}^0) - F(x_\star) - \left(F(\tilde{x}^1) - F(x_\star)\right)\nonumber \\
\geq & F(\tilde{x}^0) - F(x_\star) - \frac{1}{2}\left(F(\tilde{x}^0) - F(x_\star)\right)\nonumber \\
= & \frac{1}{2}(F(\tilde{x}^0)-F(x_\star))\label{proof-lemma8-eq-1}
	\end{align}
	 where the inequality is from \eqref{lemma5-a-eq-2} with $\rho_l=0$. Moreover, from Assumption \ref{ass:sharpness} we have 
\tcb{\begin{align*}
\lambda_s\Delta(\tilde{x}^{1})&\leq F(\tilde{x}^{1}) - F(x_\star)\leq (1/2)\left(F(\tilde{x}^0) - F(x_\star)\right)\\
		&\leq \lambda_s^2/(2L),
\end{align*}}
where the second inequality can be obtained by applying \eqref{lemma5-a-eq-2}.
\tcb{Thus, we have $\max\{\Delta(\tilde{x}^0),\Delta(\tilde{x}^1)\}\leq \lambda_s/(2L)$}.	So, based on Lemma \ref{Lip_F} and Assumption \ref{ass:sharpness},
	\begin{align} \lambda_s\Delta(\tilde{x}^0)&\leq F(\tilde{x}^0) - F(x_\star)\leq 2\left(F(\tilde{x}^0)-F(\tilde{x}^{1})\right) \nonumber\\
		&\leq 2L\left(\frac{\lambda_s}{2L}+\|x_\star\|_2\right)\|\tilde{x}^0 - \tilde{x}^1\|_2,\label{proof-lemma8-eq-2}
	\end{align}
 where the second inequality is from \eqref{proof-lemma8-eq-1}, and the last inequality is from Lemma \ref{Lip_F}.
 \eqref{proof-lemma8-eq-2} implies that
	$$\|\tilde{x}^0 - \tilde{x}^1\|_2\geq \frac{\lambda_s}{2L}\left(\frac{\lambda_s}{2L} + \|x_\star\|_2\right)^{-1}\Delta(\tilde{x}^0).$$
This proves \eqref{lemma8-conclusion-1}. To prove \eqref{lemma8-conclusion-2}, without loss of generality, we assume that $\Delta(\tilde{x}^0) = \|\tilde{x}^0-x_\star\|_2$. Using \eqref{rel:gen_weak1} and the fact that $F_t(\tilde{x}^0;\tilde{x}^0) = F(\tilde{x}^0)$ and noticing that $\tilde{x}^1 = S_t(\tilde{x}^0)$ is the minimizer of $F_t(\cdot;\tilde{x}^0)$, we have
	\begin{align*}
		&F_t(\tilde{x}^0;\tilde{x}^0) - F_t(\tilde{x}^1;\tilde{x}^0)\geq F(\tilde{x}^0) - F_t(x_\star;\tilde{x}^0)\\
		&= { F(\tilde{x}^0) - F(x_\star;x^0) - \frac{L}{2}\Delta^2(\tilde{x}^0)}\\
  &{\geq F(\tilde{x}^0) - F(x_\star) - L\Delta^2(\tilde{x}^0)} \\
  &\geq \lambda_s\Delta(\tilde{x}^0) - L\Delta^2(\tilde{x}^0),
	\end{align*}
 where the {second inequality is due to \eqref{rel:gen_weak1}}, and the last inequality is due to Assumption \ref{ass:sharpness}. 
	Using the fact that \tcb{$\Delta(\tilde{x}^0)\leq (F(\tilde{x}^0) - F(x_\star))/\lambda_s \leq \lambda_s/(2L)$} based on Assumption \ref{ass:sharpness}, we have
	$$F_t(\tilde{x}^0;\tilde{x}^0) - F_t(\tilde{x}^1;\tilde{x}^0)\geq (\lambda_s - {L}\frac{\lambda_s}{2L})\Delta(\tilde{x}^0)={\frac{\lambda_s}{2}}\Delta(\tilde{x}^0),$$
    which proves \eqref{lemma8-conclusion-2}. Therefore, the proof of Lemma \ref{thm:sharp_diff2} is complete.
\end{proof}

Now, we are ready to give an upper bound for $J_k$. 
\begin{lemma}\label{subiters_nesterov} 
\tcb{Assume that Assumption \ref{ass:sharpness} holds}. For Algorithm \ref{Nesterov} under either options \eqref{low2} \tcb{with $\rho_l>0$} or \eqref{high2} \tcb{with $\rho_h\in (0,1/4)$}, for any $k\in\mathbb{N}$, if \eqref{lemma-14-equation-1} holds, 
  the following statements hold.
	\begin{itemize}
		\item[(a)] (Low Accuracy) When using option \eqref{low2}, we have
  \begin{align}\label{lemma12-conclude-a}
   J_k\leq \left\lceil\max\left\{\frac{M_1\|B_k\|_2^2}{\Delta(x^k)},C'\right\}\right\rceil-1.
   \end{align}
		\item[(b)] (High Accuracy) When using option \eqref{high2}, we have
		  \begin{align}\label{lemma12-conclude-b}
			J_k\leq \left\lceil\max\left\{\frac{M_2\|B_k\|_2^2}{\Delta^2(x^k)},C'\right\}\right\rceil-1.
		\end{align}
	\end{itemize}
	Here, $C'$ and $C''$ are the constants in Lemma \ref{lemma:primal_recovery} and $M = \frac{\lambda_s}{2L}\left(\frac{\lambda_s}{2L} + \|x_\star\|_2\right)^{-1}$, $M_1 = {2}C''tm(\rho_l+1)/(\lambda_s\rho_l)$ and $M_2 = 4C''t^2m(\rho_h+1)/(\rho_hM^2).$
\end{lemma}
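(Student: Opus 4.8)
The plan is to exploit the $O(1/j)$ decay of the primal-dual gap of FISTA given by Lemma \ref{lemma:primal_recovery}, together with sharpness-based lower bounds on the right-hand sides of the two stopping tests \eqref{low2} and \eqref{high2}, so that each test is provably triggered once the inner iteration counter exceeds an explicit threshold. Write $g_j := H_k(z_k(\lambda_a^j)) - D_k(\lambda_a^j)$ for the gap at $\lambda_a^j$. First I would record the facts available whenever \eqref{lemma-14-equation-1} holds: weak duality \eqref{weak-duality} gives $H_k(z_k(\lambda_a^j)) - \min_{z} H_k(z) \le g_j$; Lemma \ref{thm:sharp_diff2} gives $H_k(0) - \min_z H_k(z) = F_t(x^k;x^k) - F_t(S_t(x^k);x^k) \ge \tfrac{1}{2}\lambda_s\Delta(x^k)$ (this is \eqref{lemma8-conclusion-2}, using $H_k(0)=F_t(x^k;x^k)$) and $\|x^k - S_t(x^k)\|_2 \ge M\Delta(x^k)$ (this is \eqref{lemma8-conclusion-1}); and Lemma \ref{lemma:primal_recovery} gives $g_j \le C''tm\|B_k\|_2^2/(j+1)$ for all $j \ge C'$.

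For part (a), subtracting the two displayed quantities yields $H_k(0) - H_k(z_k(\lambda_a^j)) \ge \tfrac{1}{2}\lambda_s\Delta(x^k) - g_j$, so the low-accuracy test \eqref{low2} holds as soon as $(1+\rho_l)g_j \le \tfrac{\rho_l}{2}\lambda_s\Delta(x^k)$, i.e. $g_j \le \rho_l\lambda_s\Delta(x^k)/(2(1+\rho_l))$. Combined with the decay bound, this is guaranteed once $j \ge C'$ and $j+1 \ge 2C''tm(1+\rho_l)\|B_k\|_2^2/(\rho_l\lambda_s\Delta(x^k)) = M_1\|B_k\|_2^2/\Delta(x^k)$; translating ``the first inner index meeting both conditions'' into the number of calls to \eqref{inner_ls} gives \eqref{lemma12-conclude-a}.

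For part (b), strong convexity of $F_t(\cdot;x^k)$ with modulus $1/t$ gives $\|x^k + z_k(\lambda_a^j) - S_t(x^k)\|_2^2 \le 2t\,(H_k(z_k(\lambda_a^j)) - \min_z H_k(z)) \le 2tg_j$, so by the triangle inequality and $\|x^k - S_t(x^k)\|_2 \ge M\Delta(x^k)$ we get $\|z_k(\lambda_a^j)\|_2 \ge M\Delta(x^k) - \sqrt{2tg_j}$. Hence the high-accuracy test \eqref{high2} holds once $\sqrt{2tg_j} \le \tfrac{\sqrt{\rho_h}}{1+\sqrt{\rho_h}}\,M\Delta(x^k)$, i.e. $g_j \le \rho_h M^2\Delta^2(x^k)/(2t(1+\sqrt{\rho_h})^2)$; since $(1+\sqrt{\rho_h})^2 \le 2(1+\rho_h)$ (equivalently $(1-\sqrt{\rho_h})^2 \ge 0$), it suffices that $g_j \le \rho_h M^2\Delta^2(x^k)/(4t(1+\rho_h))$, which by the decay bound holds once $j \ge C'$ and $j+1 \ge 4C''t^2m(1+\rho_h)\|B_k\|_2^2/(\rho_h M^2\Delta^2(x^k)) = M_2\|B_k\|_2^2/\Delta^2(x^k)$; the same index-counting step yields \eqref{lemma12-conclude-b}.

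The main obstacle is part (b): one must lower-bound $\|z_k(\lambda_a^j)\|_2$, but the FISTA iterate is only an approximate minimizer of the subproblem, so the first step is to quantify how far $x^k + z_k(\lambda_a^j)$ can drift from the exact prox point $S_t(x^k)$ using strong convexity and the gap $g_j$, and only then combine this with the sharpness-driven separation $\|x^k - S_t(x^k)\|_2 \ge M\Delta(x^k)$ from Lemma \ref{thm:sharp_diff2}. The remaining work --- weak/strong duality bookkeeping, the FISTA rate, and the conversion of the $O(1/j)$ thresholds into the ceiling expressions in \eqref{lemma12-conclude-a} and \eqref{lemma12-conclude-b} --- is routine.
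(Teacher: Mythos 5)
Your proposal is correct and follows essentially the same route as the paper's proof: bound the FISTA primal--dual gap via Lemma \ref{lemma:primal_recovery}, lower-bound the right-hand sides of the stopping tests using \eqref{lemma8-conclusion-1}--\eqref{lemma8-conclusion-2} together with weak duality and the $\frac{1}{t}$-strong convexity of $H_k$, and convert the resulting gap thresholds into the ceiling bounds defining $M_1$ and $M_2$. The only cosmetic difference is in part (b), where you lower-bound $\|z_k(\lambda_a^j)\|_2$ by the triangle inequality while the paper uses the Young-type inequality $\|z\|_2^2\geq \tfrac12\|z_{k\star}\|_2^2-\|z-z_{k\star}\|_2^2$; both yield the same threshold (your $(1+\sqrt{\rho_h})^2\leq 2(1+\rho_h)$ step reconciles the constants with $M_2$).
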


\begin{proof} [Proof of Lemma \ref{subiters_nesterov} ]
(a). 
By choosing 
\begin{equation}\label{lemma12-proof-a-1}
    j = \left\lceil\max\left\{C',\left(\frac{{2}C''tm\|B_k\|_2^2(\rho_l+1)}{\rho_l\lambda_s\Delta(x^k)}\right)\right\}\right\rceil-1,
\end{equation}
we have
\begin{equation}\label{lemma12-proof-a-2}
    \frac{C''tm\|B_k\|_2^2}{j+2}\leq \lambda_s\rho_l\Delta(x^k)/({2}+{2}\rho_l),
\end{equation}
which, together with \eqref{lemma10-conclud}, yields that 
\begin{align}
    & H_k(z_k(\lambda_a^{j+1})) - D_k(\lambda_a^{j+1}) \nonumber\\ \leq & \lambda_s\rho_l\Delta(x^k)/({2}+{2}\rho_l) \nonumber\\
    \leq & \frac{\rho_l}{1+\rho_l}\left(H_k(0) - \min_{z\in\mathbb{R}^n} H_k(z)\right),\label{lemma12-proof-a-3}
\end{align} 
where the last inequality is from \eqref{lemma8-conclusion-2}. From \eqref{lemma12-proof-a-3} we have
\begin{align*}
& H_k(z_k(\lambda_a^{j+1})) - D_k(\lambda_a^{j+1}) \\
\leq & \rho_l\left(-H_k(z_k(\lambda_a^{j+1})) +D_k(\lambda_a^{j+1}) + H_k(0) - \min_{z\in\mathbb{R}^n} H_k(z)\right) \\
\leq & \rho_l\left(H_k(0) - H_k(z_k(\lambda_a^{j+1}))\right),
\end{align*}
where the last inequality is due to \eqref{weak-duality}, and this gives \eqref{low2}. Therefore, \eqref{low2} should have already been satisfied when \eqref{lemma12-proof-a-1} holds. This proves \eqref{lemma12-conclude-a} in Lemma \ref{subiters_nesterov}(a).


(b). 
By choosing 

\begin{equation}\label{lemma12-proof-b-1}
    j = \left\lceil\max\left\{C',\left(\frac{4C''t^2m\|B_k\|_2^2(\rho_h+1)}{M^2\rho_h\Delta^2(x^k)}\right)\right\}\right\rceil-1,
\end{equation}
we have
\begin{equation}\label{lemma12-proof-b-2}
    \frac{C''tm\|B_k\|_2^2}{j+2}\leq M^2\rho_h\Delta^2(x^k)/\left(4t(1+\rho_h)\right),
\end{equation}
which, together with \eqref{lemma10-conclud}, yields that 
\begin{align}
    & H_k(z_k(\lambda_a^{j+1})) - D_k(\lambda_a^{j+1}) \nonumber\\ \leq & M^2\rho_h\Delta^2(x^k)/\left(4t(1+\rho_h)\right) \nonumber\\
    \leq & \rho_h\|x^k-S_t(x^k)\|_2^2/\left(4t(1+\rho_h)\right),\label{lemma12-proof-b-3}
\end{align} 
where the last inequality is from \eqref{lemma8-conclusion-1}. Denote $z_{k\star} = \argmin_{z\in\mathbb{R}^n} H_k(z)$, which indicates that $\|z_{k\star}\|_2 = \|x^k - S_t(x^k)\|_2$. We have
$$\frac{1}{2t}\|z_k(\lambda_a^{j+1}) - z_{k\star}\|_2^2\leq H_k(z_k(\lambda_a^{j+1})) - D_k(\lambda_a^{j+1}),$$
which follows from the fact that $H_k(\cdot)$ is $\frac{1}{t}$-strongly convex and \eqref{weak-duality}. From \eqref{lemma12-proof-b-3}, we have $\|z_k(\lambda_a^{j+1}) - z_{k\star}\|_2^2\leq \rho_h/\left(2(1+\rho_h)\right)\|z_{k\star}\|_2^2.$ So, by the Cauchy-Schwarz inequality, we have
\begin{align*}
	\frac{\rho_h}{2t}\|z_k(\lambda_a^{j+1})\|_2^2 & \geq \frac{\rho_h}{4t}\|z_{k\star}\|_2^2- \frac{\rho_h}{2t}\|z_k(\lambda_a^{j+1}) - z_{k\star}\|_2^2\\
	&\geq \rho_h\|z_{k\star}\|_2^2/\left(4t(1+\rho_h)\right).
\end{align*}
Together with \eqref{lemma12-proof-b-3}, \eqref{high2} should have already been satisfied when \eqref{lemma12-proof-b-1} holds. This proves \eqref{lemma12-conclude-b} in Lemma \ref{subiters_nesterov}(b). 

Therefore, the proof of Lemma \ref{subiters_nesterov} is complete. 
\end{proof}

Next, we are ready to present the proof of Theorem \ref{overall_iplnesterov}.

\begin{proof}[Proof of Theorem \ref{overall_iplnesterov}] We first prove part (a) of Theorem \ref{overall_iplnesterov} and then prove part (b) of Theorem \ref{overall_iplnesterov} in what follows.

(a).
Lemma \ref{bound_b2} and \eqref{lemma-14-equation-2} indicate that $\sup_{k\in\mathbb{N}}\|B_k\|_2\leq B(\lambda_s/(2L))$.
Together with \eqref{lemma-14-equation-1}, \eqref{lemma12-conclude-a}, \eqref{lemma-14-equation-2}, we obtain 
  \begin{equation}\label{proof-thm3-eq-2}
      J_k\leq M_1B^2(\lambda_s/(2L))/\Delta(x^k).
  \end{equation} 
		
We now prove the desired result. If $\epsilon\geq \Delta(x^0)$, then $J(\epsilon) = 0$ and \eqref{thm3-conclusion-a} holds. If $\epsilon < \Delta(x^0)$, we denote $K_\epsilon$ as the smallest index such that $\Delta(x^{k})>\epsilon,\forall k\leq K_\epsilon$ and $\Delta(x^{K_\epsilon+1})\leq\epsilon$. 
 From Lemma \ref{Lip_F} we have
 \begin{align}\label{thm3-proof-inequality-2}
	&L(\|x_\star\|_2+\lambda_s/(2L))\Delta(x^k) \\ \geq & F(x^k)-F(x_\star)\nonumber\\
\geq & \left(F(x^{K_\epsilon}) - F(x_\star)\right)\left(\frac{2+4\rho_l}{1+4\rho_l}\right)^{K_\epsilon - k}\nonumber\\
\geq &  \lambda_s\Delta(x^{K_\epsilon})\left(\frac{2+4\rho_l}{1+4\rho_l}\right)^{K_\epsilon - k} \nonumber\\ \geq & \epsilon\lambda_s\left(\frac{2+4\rho_l}{1+4\rho_l}\right)^{K_\epsilon - k}, \ \forall\ 0\leq k\leq K_\epsilon.\nonumber
\end{align}
Here we explain how these inequalities were obtained. 
 The first inequality follows from Lemma \ref{Lip_F} and \eqref{lemma-14-equation-2}. Since \eqref{lemma-14-equation-1} holds, \eqref{lemma5-a-eq-2} holds for any $k\in\mathbb{N}$ and it implies the second inequality in \eqref{thm3-proof-inequality-2}.  The third inequality in \eqref{thm3-proof-inequality-2} follows from Assumption \ref{ass:sharpness} and the last inequality in \eqref{thm3-proof-inequality-2} follows from the fact that $\Delta(x^{K_\epsilon})>\epsilon$.
 
From \eqref{thm3-proof-inequality-2} we know that $\forall\ 0\leq k\leq K_\epsilon$, it holds that
\begin{equation}\label{proof-thm3-eq-3}
    \Delta(x^k)\geq \frac{\epsilon\lambda_s}{L(\|x_\star\|_2+\lambda_s/(2L))}\left(\frac{2+4\rho_l}{1+4\rho_l}\right)^{K_\epsilon - k}.
\end{equation}
Therefore, we have
		\begin{align*}
			J(\epsilon) &= \sum_{k=0}^{K_\epsilon} J_k\\
   &\leq \sum_{k=0}^{K_\epsilon} M_1B^2(\lambda_s/(2L))/\Delta(x^k)\\
			&\leq \sum_{k=0}^{K_\epsilon} \frac{M_1B^2(\frac{\lambda_s}{2L})L(\|x_\star\|_2+\lambda_s/(2L))}{\lambda_s\epsilon}\left(\frac{2+4\rho_l}{1+4\rho_l}\right)^{k-K_\epsilon}\\
			& = \frac{E_2}{\epsilon(2+4\rho)}\sum_{k=0}^{K_\epsilon}\left(\frac{2+4\rho_l}{1+4\rho_l}\right)^{k-K_\epsilon}\leq \frac{E_2}{\epsilon}.
		\end{align*}
  where the first inequality follows from \eqref{proof-thm3-eq-2} and the second inequality follows from \eqref{proof-thm3-eq-3}. 
  This completes the proof of Theorem \ref{overall_iplnesterov}(a).  

(b). 
Lemma \ref{bound_b2} and \eqref{lemma-14-equation-4} indicates that $\sup_{k\in\mathbb{N}}\|B_k\|_2\leq B(E_0/2)$.
Together with \eqref{lemma-14-equation-1}, \eqref{lemma-14-equation-5}, \eqref{lemma12-conclude-b}, ,
  \begin{equation}\label{proof-thm3-eq-J-bound}
      J_k\leq M_2B^2(E_0/2)/\Delta^2(x^k),\forall k\geq 0,
  \end{equation}
which follows from our choice of $E_3\leq B(E_0/2)\sqrt{M_2/C'}$ such that
  $M_2B^2(E_0/2)/\Delta^2(x^k)\geq M_2B^2(E_0/2)/\Delta^2(x^0)\geq C'$. Next, we prove the conclusion. We adopt the same definition of $K_\epsilon$ as in (a) and pick $\{\epsilon_i\}_{i=1}^\infty = \{\Delta(x^i)\}_{i=1}^\infty$ so that $K_{\epsilon_i} = i-1$. From \eqref{proof-lemma14-eq-1} we have 
  \begin{equation}\label{proof-thm3-eq-6}
      \Delta(x^{i-1})\geq \sqrt{E_0\epsilon_i}, \forall i\in\mathbb{N}.
  \end{equation}
  From \eqref{lemma-14-equation-5} and \eqref{proof-thm3-eq-6} we have
  \begin{equation}\label{proof-thm3-eq-7}
      \Delta(x^k)\geq 2^{i-1-k}\sqrt{E_0\epsilon_i},\forall\ 0\leq k\leq i-1.
  \end{equation}
  Finally, for any $i\in\mathbb{N}$, we have 
		\begin{align*}
			&J(\epsilon_i) = \sum_{k=0}^{i-1} J_k\leq \sum_{k=0}^{i-1}\frac{M_2B^2(E_0/2)}{4^{i-1-k}E_0\epsilon_i}\\
			&\leq \frac{4M_2B^2(E_0/2)}{3E_0\epsilon_i} = \frac{E_4}{\epsilon_i}.
		\end{align*}
  Here, the first inequality follows from \eqref{proof-thm3-eq-J-bound} and \eqref{proof-thm3-eq-7}, and the second inequality follows from the fact that
  $$\sum_{k=0}^{i-1}\frac{1}{4^{i-1-k}}\leq \sum_{k=-\infty}^{i-1}\frac{1}{4^{i-1-k}} = \frac{4}{3}.$$
  This completes the proof of Theorem \ref{overall_iplnesterov}(b). 
  
  Therefore, the proof of Theorem \ref{overall_iplnesterov} is complete.
\end{proof}

\section{Conclusion}\label{sec:conclusion}

In this paper, we proposed a new inexact proximal linear algorithm for solving the robust phase retrieval problem. Our contribution lies in the two adaptive stopping criteria for the subproblem in the proximal linear algorithm. We showed that the iteration complexity of our inexact proximal linear algorithm is in the same order as the exact proximal linear algorithm. Under the sharpness condition, we are able to prove the local convergence of the proposed method. Moreover, we discussed how to use the FISTA for solving the subproblem in our inexact proximal linear algorithm, and analyzed the total oracle complexity for obtaining an $\epsilon$-optimal solution under the sharpness condition. Numerical results demonstrated the superior performance of the proposed methods over some existing methods. 


\bibliographystyle{IEEEtran}
\bibliography{inexact_pl_refs}
\end{document}